\newtheorem{theorem}{Theorem}[section]
\theoremstyle{plain}
\newtheorem{corollary}[theorem]{Corollary}
\newtheorem{definition}[theorem]{Definition}
\newtheorem{example}[theorem]{Example}
\newtheorem{lemma}[theorem]{Lemma}
\newtheorem{proposition}[theorem]{Proposition}
\theoremstyle{remark}
\newtheorem{remark}[theorem]{Remark}
\numberwithin{equation}{section}
\begin{document}
\title[The Jacobson radical of an evolution algebra]{The Jacobson radical of
an evolution algebra}
\author{M. Victoria Velasco}
\address{Departamento de An\'{a}lisis Matem\'{a}tico, Universidad de
Granada, 18071 Granada, Spain.}
\email{vvelasco@ugr.es}
\urladdr{}
\thanks{}
\date{December 2016}
\subjclass[2010]{ Primary 34L05,35P05,58C40.}
\thanks{This research has been funded by Project MTM2016-76327-C3-2-P of the
Spanish Ministerio of Econom\'{\i}a and Competitividad, and the Research
Group FQM 199 of the Junta de Andaluc\'{\i}a, both of them partially
supported with Fondos Feder.}
\keywords{Evolution algebra, Jacobson radical, modular ideal, spectrum,
semisimplicity.}
\dedicatory{}

\begin{abstract}
In this paper we characterize the maximal modular ideals of an evolution
algebra $A\,\ $in order to describe its Jacobson radical, \ $Rad(A).$ We
characterize semisimple evolution algebras (i.e. those such that $%
Rad(A)=\{0\}$)as well as radical ones. We introduce two elemental notions of
spectrum of an element $a$ in an evolution algebra $A$, namely the spectrum $%
\sigma ^{A}(a)$ and the m-spectrum $\sigma _{m}^{A}(a)$ (they coincide for
associative algebras, but in general $\sigma ^{A}(a)\subseteq \sigma
_{m}^{A}(a),$ and we show examples where the inclusion is strict). We prove
that they are non-empty and describe $\sigma ^{A}(a)$ and $\sigma _{m}^{A}(a)
$ in terms of the eigenvalues of a suitable matrix related with the
structure constants  matrix of $A.$ We say $A$ is m-semisimple (respectively
spectrally semisimple) if zero is the unique \ ideal contained into the set
of $a$ in $A$ such that $\sigma _{m}^{A}(a)=\{0\}$ $\ $(respectively $\sigma
^{A}(a)=\{0\}$). In contrast to the associative case (where the notions of
semisimplicity, spectrally semisimplicty and m-semisimplicity are
equivalent)\ we show examples of m-semisimple evolution algebras $A$ that,
nevertheless, are radical algebras (i.e. $Rad(A)=A$). Also some theorems
about automatic continuity of homomorphisms will be considered.
\end{abstract}

\maketitle

\section{\protect\bigskip \label{radisemi}Introduction}

By an \textbf{algebra }we understand a linear space $A$ over the field $%
\mathbb{K}$ (where either $\mathbb{K}=\mathbb{R}$ or $\mathbb{C}$)\ provided
with a bilinear map $(a,b)\rightarrow ab,$ from $A\times A\rightarrow A,$
named the \textbf{multiplication of} $A.$ An algebra $A$ is said to be 
\textbf{associative }if $(ab)c=a(bc),$ for every $a,b,c\in A,$ and \textbf{\
commutative} if $ab=ba,$ for every $a,b\in A.$ Therefore, in contrast for
instance with \cite{Bonsall,Dales,Palmer,Rickart}, throughout this paper,
the product of an algebra does not need to be associative (or commutative),
unless this fact is explicitly specified. Moreover, we say that $e\in A$ is
a \textbf{unit }of $A$ if $ae=ea=a,$ for every $a\in A.$ Obviously such a
unit element is unique whenever it exists.

Relevant examples of algebras in this general meaning that we are
considering are \textbf{evolution algebras}. These algebras are very
meaningful in Genetics and its theory was founded in \cite{Tian}. There, it
is justified why evolution algebras have emerged to enlighten the study of
Non-Mendelian Genetics, which is the basic language of Molecular Biology. In
this pioneering monograph many connections of evolution algebras with other
mathematical fields (such as Graph Theory, Stochastic Processes, Group
Theory, Dynamic Systems, Mathematical Physics, etc.) are established,
pointing out some further research topics.

An \textbf{evolution algebra} is an algebra $A$ provided with a basis $%
B:=\{e_{i}:i\in \Lambda \}$ such that $e_{i}e_{j}=0,$\ whenever $i\neq j.$\
Such a basis $B$\ is named a \textbf{natural basis }of \emph{\ }$A.$ If $%
e_{i}^{2}=\sum_{k\in \Lambda }\omega _{ki}e_{k}$, then the \textbf{structure
matrix }is given by

\begin{equation*}
M_{B}(A)=(\omega _{ki})\in M_{\Lambda }(\mathbb{K}),
\end{equation*}%
where $M_{\Lambda }(\mathbb{K})$ denote the set of matrices in ${\mathbb{K}}%
^{\Lambda \times \Lambda }$ whose columns have a finite number of non-zero
entries. The structure matrix defines the product of $A,$ codifying also the
dynamic structure of $A.$

Recently, many different aspects of the theory of evolution algebras have
been developed. For instance, in \cite{Ca-Si-Ve} many algebraic properties
of evolution algebras (like simplicity, decomposability in direct sum of
ideals), or graphs associated to them, are deeply studied. \ The derivations
of evolution algebras have been analyzed in \cite{Tian,evo3,evo4}. In \cite%
{evo4}, the evolution algebras have been used to describe the inheritance of
a bisexual population and, in this setting, the existence of non-trivial
homomorphisms onto the sex differentiation algebra have been studied in \cite%
{evo14}. Algebraic notions such as nilpotency and solvability may be
interpreted biologically as the fact that some of the original gametes (or
generators) become extinct after certain generations, and these algebraic
properties have been studied in \cite{evo5,evo6,evo7,evo8,evo9,evo12,evo13}.
Moreover evolution algebras associated with function spaces defined by Gibbs
measures on a graph are considered in \cite{evo2}, to provide a natural
introduction of thermodynamics in the studying of several systems in
Biology, Physics and Mathematics. On the other hand, chains of evolution
algebras (i.e. dynamical systems the state of which at each given time is an
evolution algebra) are considered in \cite{evo1,evo10,evo11,evo15}.

The aim of this paper is to study the Jacobson radical of an evolution
algebra, as well as some notions of semisimplicity related to it. The
classical definition of Jacobson radical of an associative Banach algebras
was extended to the setting of non-associative algebras in \cite[Definition 4%
]{Ma-Ve2}. According to \cite{Ma-Ve2}, \ the \textbf{Jacobson radical} of a
commutative algebra is defined as the intersection of all maximal modular
ideals (in other words, the intersection of all primitive ideals in the
meaning of \cite[Definition 3]{Ma-Ve2}). A \textbf{modular ideal }of a
commutative algebra $A$ is an ideal $M\,\ $endowed with a modular unit, that
is $u\in A$ such that $a-au\in M,$ for every $a\in A.$ It is not difficult
to check that an ideal $M$ is modular if and only if $A/M$ (provided with
the canonical quotient product) is an algebra with a unit.

When $A$ has a unit it turns out that every ideal $I\,\ $of \thinspace $A$ \
is modular ideal (because the unit of $A$ is a modular unit for $I$). Since
the existence of a unit is someway relevant for our approach, Section 2 of
this paper is aimed to show that evolution algebras exceptionally have a
unit. More precisely in Proposition \ref{unidad} we prove that \textit{%
infinite-dimensional evolution algebras have not a unit. Moreover, a
finite-dimensional evolution algebra has a unit if, and only if, the matrix
structure of }$A$\textit{\ relative to an arbitrary natural basis }$B\,$%
\textit{\ is diagonal with non-zero entries. }This result enlightens
Proposition 1 in \cite[Section 3.1.2]{Tian}, where the finite-dimension of
the considered evolution algebra is implicitly assumed. Moreover, we\ obtain
that \textit{every non-trivial evolution algebra does not have a unit, and
its unitization is not an evolution algebra} (see Corollary \ref{unifinal}).

Modular ideals of an evolution algebra are characterized in Corollary \ref%
{coro}. As a consequence, in Corollary \ref{coroa}, we obtain that \textit{%
maximal modular ideals of an evolution algebra have codimension one.} More
precisely, if \textit{\ }$B:=\{e_{i}:i\in \Lambda \}$ is a natural basis of
an evolution algebra $A$ then, the set of \textbf{modular index} associated
to $\Lambda $ is defined $\ $as $\Lambda _{m}=\{$ $i\in \Lambda :$ $\omega
_{ii}\neq 0$ and $\omega _{ij}=0$ if $i\neq j\}$ (see Definition \ref%
{modularindex}) and, in Corollary \ref{moduin}, it is showed that $M$\textit{%
\ is a maximal modular ideal of }$A$ \textit{if, and only if, }$%
M=lin\{e_{i}:i\in \Lambda \backslash \{i_{0}\}\},$\textit{\ for some }$%
i_{0}\in \Lambda _{m},$ \ \textit{in which case }$\frac{1}{\omega
_{i_{0}i_{0}}}e_{i_{0}}$\textit{\ is a modular unit for} $M.$ Therefore, in
Corollary \ref{Jradical}, we obtain that \textit{the Jacobson radical of }$A$%
\textit{\ can be described as follows, }%
\begin{equation*}
Rad(A)=lin\{e_{i}:i\in \Lambda \backslash \Lambda _{m}\}.
\end{equation*}%
\textit{Consequently, }$A$\textit{\ is a radical algebra if and only if }$%
\Lambda _{m}=\emptyset .$\textit{\ Similarly }$A$\textit{\ is semisimple if
and only if }$\Lambda =\Lambda _{m}.$\textit{\ }Moreover, in Corollary \ref%
{nuevo}, we prove that $A/Rad(A)$\textit{\ is either }$\{0\}$ \textit{or} 
\textit{a non-zero trivial evolution algebra.}

In Proposition \ref{nilpo} we show that\textit{\ every quasi-invertible
ideal, and particularly every nilpotent ideal, of an evolution algebra }$A$ 
\textit{is contained in }$Rad(A).$ Nevertheless, in contrast with the
associative case, we provide an example of an evolution algebra containing
elements in its Jacobson radical that are not quasi-invertible (see Example %
\ref{quasi-inver}).

In Section \ref{espec}, we review the notion of spectrum of an element, $a,$
in a non- associative algebra, $A,$ by considering two definitions of
spectrum, namely $\sigma ^{A}(a)$ and \ $\sigma _{m}^{A}(a)$ (the spectrum
and the multiplicative spectrum of $a,$ respectively)$.$ More precisely, for
a complex algebra $A$ which a unit, $e,$ we define for every $a\in A,$

\begin{equation*}
\sigma ^{A}(a):=\{\lambda \in \mathbb{C}:a-\lambda e\text{ is not invertible}%
\},
\end{equation*}%
in the meaning that $a-\lambda e$ has not left or a right inverse, and on
the other hand we define

\begin{equation*}
\sigma _{m}^{A}(a):=\{\lambda \in \mathbb{C}:a-\lambda e\text{ is not
m-invertible}\},
\end{equation*}%
in the meaning that $L_{a}-\lambda I$ or $R_{a}-\lambda I$ are not bijective
(where \thinspace $L_{a}$ and $R_{a}$ denote, respectively, the left and\
right multiplication operator by $a$)$.$ It is clear that $\sigma
_{{}}^{A}(a)\subseteq \sigma _{m}^{A}(a)$ and also that for an associative
algebra $A$ we have $\sigma _{{}}^{A}(a)=\sigma _{m}^{A}(a)$. Nevertheless,
in Example \ref{estrict}, we show an element $a$ in an evolution algebra $A$
for which $\sigma _{{}}^{A}(a)$ is a proper subset of $\sigma _{m}^{A}(a)$.

On the other hand, for an evolution algebra $A$ and $a\in A,$ we prove that
the sets $\sigma ^{A}(a)$ and $\sigma _{m}^{A}(a)$ are non-empty and we
characterize both of them in Proposition \ref{propoa} (for trivial evolution
algebras), in Proposition \ref{propob} (for finite-dimensional evolution
algebras), and in Proposition \ref{propoc} (for the general case), according
to the eigenvalues of certain matrices related with a structure matrix.

As said in \cite[p. 189]{Palmer} (see also \cite[Theorem 4.3.6]{Dales}), in
the classical theory of Banach algebras, \textit{the radical of Jacobson of
an associative algebra }$A$ \textit{can be described as the largest ideal on
which the spectral radius of each element is identically zero. Consequently, 
}$A$ \textit{\ is semisimple if its radical is zero, that is if }$\{0\}$%
\textit{\ is the only ideal contained in the set of elements having spectral
radius equal to zero. }

Since we have two notions of spectrum for an element $a\in $ $A$, namely $%
\sigma _{{}}^{A}(a)$ and $\sigma _{m}^{A}(a),$ we define the corresponding
notions of spectral radius, $\rho $ and $\rho _{m},$ in Definition \ref%
{radius}. Thus, for an evolution algebra $A$ $\ $and $a\in A,$ we have that $%
\rho (a)=0$ if, and only if, $\sigma ^{A}(a)=\{0\}$ meanwhile $\rho
_{m}(a)=0 $ if and only if $\sigma _{m}^{A}(a)=\{0\}.$ In the spirit of the
associative case, we say that $A$ is \textbf{spectrally semisimple} if zero
is the unique ideal of $A$ contained in the set $\{a\in A:\rho (a)=0\}.$
Similarly, we say that $A$ is \textbf{multiplicatively semisimple} or 
\textbf{m-semisimple }if \ zero is the unique ideal of $A$ contained in the
set $\{a\in A:\rho _{m}(a)=0\}$ (see in Definition \ref{semisimplicity}).

It is known that \ if the algebra $A$ is semisimple (that is $Rad(A)=\{0\}$)
then $A$ is spectrally semisimple; that if $A$ is spectrally semisimple then 
$A$ is m-semisimple; and also that if $A$ is associative then, these three
notions of semisimplicity are equivalent (see Proposition \ref{rela}). In
contrast to the associative case we provide, in Example \ref{exafinal}, 
\textit{an evolution algebra }$A$\textit{\ which is a radical algebra and
nevertheless }$A$ \textit{is m-semisimple}. This shows how far these three
notions of semisimplicity can be in the non-associative framework.

The notion of m-semisimplicity was used in \cite{Ma-Ve3} to prove the\textit{%
\ automatic continuity of every surjective homomorphism from a Banach
algebra onto a m-semisimple Banach algebra.}

\section{About the existence of a unit in an evolution algebra with
arbitrary dimension}

We begin by showing that an evolution algebra has a unit only in very
special cases. Indeed, an infinite-dimensional evolution algebra has no a
unit, as we prove in the next proposition. Previously, we establish a notion
that will be very helpful in our approach.

\begin{definition}
\emph{Let }$A$\emph{\ be an evolution algebra, and }$B:=\{e_{i}:i\in \Lambda
\}$\emph{\ a natural basis of }$A.$\emph{\ If }$a\in A$\emph{\ is such that }%
$a=\sum_{i\in \Lambda }\alpha _{i}e_{i}$\emph{\ then, the support of }$a$ 
\emph{(respect to }$B$\emph{)} \emph{is defined as} \emph{\ }%
\begin{equation*}
\Lambda _{a}^{B}:=\{i\in \Lambda :\alpha _{i}\neq 0\}.
\end{equation*}

\emph{Similarly, if }$S$\emph{\ is a non-void subset of }$A$\emph{\ then the
support of }$S$ \emph{(respect to }$B$\emph{)} \emph{\ is the set given by }%
\begin{equation*}
\Lambda _{S}^{B}:=\dbigcup\limits_{a\in S}\Lambda _{a}.
\end{equation*}

\emph{If there is no confusion about the prefixed natural basis then we
write simply }$\Lambda _{a}$\emph{\ and }$\Lambda _{S},$\emph{\ respectively}%
$.$
\end{definition}

\begin{proposition}
\label{unidad}Let $A$ be an evolution algebra.

$\mathrm{(i)}$ If $\dim A=\infty $ then, $A$ does not have a unit.

$\mathrm{(ii)}$\ If $\dim A<\infty $ then, $A$ has a unit if, and only if,
for every natural basis $B:=\{e_{1},...,e_{n}\}$ of $A,$ we have that $%
e_{i}^{2}=\omega _{ii}e_{i}$ \ with $\omega _{ii}\neq 0,$ for $i=1,...,n,$
in whose case the unit of $A$ is given by $e=\frac{1}{\omega _{11}}e_{1}+...+%
\frac{1}{\omega _{nn}}e_{n}.$
\end{proposition}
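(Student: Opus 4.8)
The plan is to work directly with the defining relations $e_ie_j=0$ for $i\neq j$ and $e_i^2=\sum_k\omega_{ki}e_k$, exploiting the fact that a putative unit $e$ must satisfy $ae=a$ for all $a$, and in particular $e_ie=e_i$ for each basis element. I would first write $e=\sum_{j\in\Lambda}\lambda_j e_j$ and compute $e_ie$ using bilinearity: since $e_ie_j=0$ for $j\neq i$, only the $j=i$ term survives, giving $e_ie=\lambda_i e_i^2=\lambda_i\sum_k\omega_{ki}e_k$. The equation $e_ie=e_i$ then forces, by comparing coefficients in the basis $B$, that $\lambda_i\omega_{ii}=1$ and $\lambda_i\omega_{ki}=0$ for all $k\neq i$. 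This is the single computation that drives both parts, so I would isolate it as the first step.

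For part $\mathrm{(ii)}$, the finite-dimensional direction is then almost immediate. The relation $\lambda_i\omega_{ii}=1$ shows $\omega_{ii}\neq 0$ and $\lambda_i=1/\omega_{ii}$, while $\lambda_i\omega_{ki}=0$ together with $\lambda_i\neq 0$ forces $\omega_{ki}=0$ for $k\neq i$, i.e.\ $e_i^2=\omega_{ii}e_i$. Conversely, if every $e_i^2=\omega_{ii}e_i$ with $\omega_{ii}\neq 0$, I would verify that $e=\sum_i\frac{1}{\omega_{ii}}e_i$ is genuinely a unit: it suffices to check $ee_i=e_i$ for each $i$ (since multiplication is commutative for evolution algebras, left and right agree, and one extends to arbitrary $a$ by linearity), and this is the same coefficient computation run in reverse. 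Note the statement must hold for \emph{every} natural basis, but since the existence of a unit is basis-independent, once we know $A$ has a unit the coefficient argument applies verbatim to any chosen natural basis $B$.

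For part $\mathrm{(i)}$, I would argue by contradiction: suppose $\dim A=\infty$ and $e=\sum_{j\in\Lambda}\lambda_j e_j$ is a unit. The support $\Lambda_e$ is necessarily finite (every element of $A$ is a finite linear combination of basis vectors), so pick any index $i\in\Lambda\setminus\Lambda_e$, which exists precisely because $\Lambda$ is infinite. Applying the first step to this $i$ yields $\lambda_i\omega_{ii}=1$, hence $\lambda_i\neq 0$, contradicting $i\notin\Lambda_e$. This is clean, and I expect it to be the whole content of $\mathrm{(i)}$.

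The main subtlety—less an obstacle than a point demanding care—is the convergence/finiteness bookkeeping implicit in the column-finiteness condition on $M_B(A)$: the sum $e_i^2=\sum_k\omega_{ki}e_k$ is a finite sum (the $i$-th column has finitely many nonzero entries), so $e_ie=\lambda_i\sum_k\omega_{ki}e_k$ is a legitimate element of $A$ and coefficient comparison in the basis is valid. I would flag once that all sums in sight are finite so that equating coefficients basis-vector by basis-vector is justified even when $\Lambda$ is infinite, and then the two parts follow from the single coefficient computation noted above.
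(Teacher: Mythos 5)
Your proof is correct and takes essentially the same route as the paper: both reduce everything to the identity $e_ie=\lambda_ie_i^2$ for a putative unit $e=\sum_j\lambda_je_j$ and compare coefficients in the natural basis, with the same support argument (pick $i\notin\Lambda_e$) for part $\mathrm{(i)}$. Your direct coefficient comparison is marginally cleaner than the paper's manipulation $e_i^2=e_i(e_ie)=\alpha_i\omega_{ii}e_i^2$, which requires separately ruling out the case $e_i^2=0$, but it is the same underlying idea.
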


\begin{proof}
Let $A$ be an evolution algebra and $B:=\{e_{i}:i\in \Lambda \}$ a natural
basis of $A.$ The assertion $\mathrm{(i)}$ is clear because if $\Lambda $ is
infinite, if $e$ is a unit of $A,$ and if $i\notin \Lambda _{e}$ then, $%
ee_{i}=0,$ a contradiction that shows that $A$ does not have a unit. To
prove $\mathrm{(ii)}$, suppose that $\Lambda =\{1,...,n\}.$ If $e$ is a unit
for $A$ then, clearly $e=\alpha _{1}e_{1}+...+\alpha _{n}e_{n}$ with $\alpha
_{i}\neq 0$ for every $i=1,...,n.$ Since 
\begin{equation*}
e_{i}^{2}=e_{i}(e_{i}e)=e_{i}(\alpha _{i}e_{i}^{2})=\alpha _{i}\omega
_{ii}e_{i}^{2}
\end{equation*}%
it follows that either $e_{i}^{2}=0$ or $\omega _{ii}\alpha _{i}=1.$ If  $%
e_{i}^{2}=0$ then $ee_{i}=0,~$\ a contradiction. Therefore $\omega
_{ii}\alpha _{i}=1$ and hence $e=\frac{1}{\omega _{11}}e_{1}+...+\frac{1}{%
\omega _{nn}}e_{n},$ so that $e_{i}^{2}=\omega _{ii}e_{i}$ with $\omega
_{ii}\neq 0,$ for every $i=1,...,n.$ The rest is obvious.
\end{proof}

Following \cite{Ti-Vo} we introduce the next definition.

\begin{definition}
\emph{Let }$A$\emph{\ be an evolution algebra and }$B:=\{e_{i}:i\in \Lambda
\}$\emph{\ a natural basis. We say that }$A$\emph{\ is \textbf{non-degenerate%
} if }$e_{i}^{2}\neq 0$ \emph{for any }$i\in \Lambda .$
\end{definition}

It turns out that the above definition does not depend on the prefixed
natural basis $B$, as it was proved in \cite[Corollary 2.19]{Ca-Si-Ve}. This
is because \textit{the evolution algebra }$A$ \textit{is non-degenerate if,
and only if, }$Ann(A)=\{0\},$ where $Ann(A)$ denotes the annihilator of $A$
(see \cite[Proposition 2.18]{Ca-Si-Ve}). Indeed, $Ann(A)=\{0\}$\textit{\ if
and only if the set }$\Lambda _{0}=\{i\in \Lambda :e_{i}^{2}=0\}$\textit{\
is empty. }

\begin{definition}
\emph{Let }$A$\emph{\ be \ an evolution algebra and }$B:=\{e_{i}:i\in
\Lambda \}$\emph{\ a natural basis of }$A.$\emph{\ Let }$\widetilde{B}$\emph{%
\ be another natural basis of }$A.$ \emph{We say that }$B$\emph{\ and }$%
\widetilde{B}$\emph{\ are \textbf{related } if there exists a family of
non-zero constants }$\{k_{i}\}_{i\in \Lambda }$\emph{\ and a bijection }$%
\sigma :\Lambda \rightarrow \Lambda $\emph{\ such that }$\widetilde{B}%
=\{k_{i}e_{\sigma (i)}:i\in \Lambda \}.$
\end{definition}

The next result is easy to prove and nevertheless relevant because\ it gives
us information about when the natural basis of a non-degenerated evolution
algebra is unique (in the meaning that some other natural basis is related
to it).

\begin{proposition}
\label{relate}Let $B:=\{e_{i}:i\in \Lambda \}$ a natural basis of a
non-degenerate evolution algebra $A$. If the set $\{e_{i}^{2}:i\in \Lambda
\} $ is linearly independent then, any two natural basis of $A$ are related.
\end{proposition}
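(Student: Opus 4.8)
The plan is to fix an arbitrary second natural basis $\widetilde{B}=\{f_{j}:j\in \Lambda \}$ of $A$ (as Hamel bases of the same space, $B$ and $\widetilde{B}$ have the same cardinality, so both may be indexed by $\Lambda $) and to show that the change-of-basis matrix is forced to be a generalized permutation matrix. First I would expand each $f_{j}$ in the basis $B$, writing $f_{j}=\sum_{i\in \Lambda }\alpha _{ij}e_{i}$ as a finite sum. Since $B$ is a natural basis, $e_{i}e_{k}=0$ for $i\neq k$, and therefore
\begin{equation*}
f_{j}f_{l}=\sum_{i\in \Lambda }\alpha _{ij}\alpha _{il}e_{i}^{2}.
\end{equation*}
Because $\widetilde{B}$ is itself a natural basis, $f_{j}f_{l}=0$ whenever $j\neq l$; and since $\{e_{i}^{2}:i\in \Lambda \}$ is assumed linearly independent, this yields $\alpha _{ij}\alpha _{il}=0$ for all $i$ and all $j\neq l$. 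In words: for each fixed $i$, at most one column index $j$ has $\alpha _{ij}\neq 0$, i.e. each $e_{i}$ occurs with non-zero coefficient in at most one $f_{j}$.

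Next I would upgrade \textquotedblleft at most one\textquotedblright\ to \textquotedblleft exactly one\textquotedblright . Since $\widetilde{B}$ spans $A$, each $e_{k}$ is a finite combination $e_{k}=\sum_{j}\gamma _{j}f_{j}$; comparing the coefficient of $e_{k}$ forces some $\alpha _{kj}\neq 0$, so in fact each $e_{i}$ occurs in precisely one $f_{j}$. This defines a map $\sigma \colon \Lambda \rightarrow \Lambda $ sending $i$ to the unique index $\sigma (i)$ with $\alpha _{i\sigma (i)}\neq 0$. Consequently $\alpha _{ij}\neq 0$ if and only if $j=\sigma (i)$, the fibres $\{\sigma ^{-1}(j)\}$ partition $\Lambda $, each $f_{j}$ is supported on $\sigma ^{-1}(j)$, and setting $V_{j}:=lin\{e_{i}:\sigma (i)=j\}$ we obtain $A=\bigoplus_{j}V_{j}$ with $f_{j}\in V_{j}$.

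It remains to prove that $\sigma $ is a bijection, equivalently that each $V_{j}$ is one-dimensional; this is the step I expect to be the main obstacle, since in the infinite-dimensional case one cannot simply invoke invertibility of a determinant. I would argue as follows. Fix $k$ and expand $e_{k}=\sum_{j}\gamma _{j}f_{j}$; as $f_{j}\in V_{j}$ and the $V_{j}$ are in direct sum, comparing the $V_{\sigma (k)}$-components gives $e_{k}=\gamma _{\sigma (k)}f_{\sigma (k)}$, so every $e_{k}$ is a non-zero scalar multiple of $f_{\sigma (k)}$. If two distinct indices $k,k^{\prime }$ had $\sigma (k)=\sigma (k^{\prime })$, then $e_{k}$ and $e_{k^{\prime }}$ would both be proportional to the same $f_{\sigma (k)}$, contradicting the linear independence of $B$; hence $\sigma $ is injective and each fibre $\sigma ^{-1}(j)$ is a singleton $\{i_{j}\}$. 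Surjectivity is immediate, since $\sigma ^{-1}(j)=\emptyset $ would give $f_{j}=0$. Thus $\sigma $ is a bijection and $f_{j}=\alpha _{i_{j}j}e_{i_{j}}$ is a non-zero multiple of a single basis vector. Reindexing by $\tau :=\sigma ^{-1}$ and $k_{j}:=\alpha _{\tau (j)j}$ rewrites $\widetilde{B}=\{k_{j}e_{\tau (j)}:j\in \Lambda \}$, which is exactly the assertion that $B$ and $\widetilde{B}$ are related.
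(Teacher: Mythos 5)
Your proof is correct, and its first step coincides with the paper's: from $f_{j}f_{l}=0$ for $j\neq l$ together with the linear independence of $\{e_{i}^{2}:i\in \Lambda \}$, both arguments deduce that distinct elements of $\widetilde{B}$ have pairwise disjoint supports relative to $B$. The two proofs part ways in how they finish. The paper argues by contradiction: if some $u_{i_{0}}$ had two support indices $i_{1}\neq i_{2}$, it expands $e_{i_{1}}$ in the basis $\widetilde{B}$ and, splitting into cases according to whether $u_{i_{0}}$ occurs in that expansion, derives a contradiction using the coordinate projections $\pi _{i_{1}}$ and $\pi _{i_{2}}$; this shows directly that every $u_{i}$ has singleton support. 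You instead give a constructive argument: spanning upgrades ``at most one'' to ``exactly one'', yielding the map $\sigma $ and the decomposition $A=\bigoplus_{j}V_{j}$ with $f_{j}\in V_{j}$, and projecting the expansion of $e_{k}$ onto the summand $V_{\sigma (k)}$ shows that each $e_{k}$ is a non-zero multiple of $f_{\sigma (k)}$ --- the dual of the paper's claim --- after which injectivity and surjectivity of $\sigma $ are immediate. Both routes are sound and of comparable length; yours avoids the case analysis and produces the bijection explicitly, which makes the final reindexing transparent, while the paper's reaches the singleton-support statement with less structural overhead (no direct-sum decomposition and no separate bijectivity checks).
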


\begin{proof}
Let $\widetilde{B}:=\{u_{i}:i\in \Lambda \}$ be another natural basis of $A.$%
\ Let $i,j\in \Lambda $ with $i\neq j.$ Since $u_{i}u_{j}=0$ we have that $%
\Lambda _{u_{i}}^{B}\cap \Lambda _{u_{j}}^{B}=\emptyset ,$ as the set $%
\{e_{i}^{2}:i\in \Lambda \}$ is linearly independent. Therefore, if we prove
that $card(\Lambda _{u_{i}}^{B})=1$ for every $i\in \Lambda $ then, the
result follows, because\ this means that $u_{i}=\alpha _{j_{0}}e_{j_{0}}$
(with $\alpha _{j_{0}}\neq 0$) for a unique $j_{0}\in \Lambda .$ To the
contrary, suppose that there exists $i_{0}\in \Lambda $ such that $%
card(\Lambda _{u_{i_{0}}}^{B})\neq 1.$ Let $i_{1},i_{2}\in \Lambda
_{u_{i_{0}}}^{B}$ with $i_{1}\neq i_{2}.$ Then, $u_{i_{0}}=\alpha
e_{i_{1}}+\beta e_{i_{2}}+\sum_{k\in \Lambda _{0}}\gamma _{k}e_{k}$ where $%
\alpha \beta \neq 0,$ and $\Lambda _{0}=$ $\Lambda
_{u_{i_{0}}}^{B}\backslash \{i_{1},i_{2}\}.$ Moreover $i_{1},i_{2}\notin
\Lambda _{u_{j}}^{B}$ for every $j\in \Lambda \backslash \{i_{0}\}.$ Denote
by $\pi _{j}$ projection of $A$ on $\mathbb{K}e_{j}.$ If $%
e_{i_{1}}=\sum_{i\in \Lambda _{e_{i_{1}}}^{\widetilde{B}}}\alpha _{i}u_{i}$
then, either $i_{0}\notin \Lambda _{e_{i_{1}}}^{\widetilde{B}}$ in whose
case $\pi _{i_{1}}(e_{i_{1}})=\pi _{i_{1}}(\sum_{i\in \Lambda _{e_{i_{1}}}^{%
\widetilde{B}}}\alpha _{i}u_{i})=0,$ a contradiction as $\pi
_{i_{1}}(u_{j})=0$ if $j\in \Lambda \backslash \{i_{0}\}$, or $i_{0}\in
\Lambda _{e_{i_{1}}}^{\widetilde{B}}$ in whose case $\pi
_{i_{2}}(e_{i_{1}})=\pi _{i_{2}}(\sum_{i\in \Lambda _{e_{i_{1}}}^{\widetilde{%
B}}}\alpha _{i}u_{i})\neq 0,$ another contradiction. This proves that $%
card(\Lambda _{u_{i_{0}}}^{B})=1$ as desired.
\end{proof}

In the above proposition, the hypothesis that the set $\{e_{i}^{2}:i\in
\Lambda \}$ is linearly independent cannot be removed as the next example
shows.

\begin{example}
\emph{Let }$A\,$\emph{be an evolution algebra with natural basis }$%
B:=\{e_{1},e_{2}\}$\emph{\ where }$e_{1}^{2}=e_{1}$\emph{\ and }$%
e_{2}^{2}=-e_{1}.$\emph{\ Then }$A$\emph{\ is non-degenerate and }$%
\widetilde{B}:=\{u_{1},u_{2}\}$\emph{\ with }$u_{1}=e_{1}+e_{2}$\emph{\ and }%
$u_{2}=e_{1}-e_{2}$\emph{\ is a natural basis of }$A$\emph{\ which is not
related to }$B.$
\end{example}

\begin{corollary}
Every two natural basis of a simple evolution algebra are related.
\end{corollary}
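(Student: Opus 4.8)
The plan is to derive this corollary directly from Proposition \ref{relate} by showing that a simple evolution algebra automatically satisfies the two hypotheses of that proposition, namely non-degeneracy and the linear independence of the set $\{e_i^2 : i \in \Lambda\}$. Once both hypotheses are verified, Proposition \ref{relate} immediately gives that any two natural bases are related, which is exactly the claim.

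First I would recall what simplicity means for an evolution algebra: $A$ has no nontrivial proper ideals and $A^2 \neq \{0\}$. For non-degeneracy, I would use the characterization quoted just after the non-degeneracy definition, namely that $A$ is non-degenerate if and only if $\mathrm{Ann}(A) = \{0\}$. Since $\mathrm{Ann}(A)$ is an ideal of $A$, simplicity forces $\mathrm{Ann}(A)$ to be either $\{0\}$ or all of $A$; the latter would mean $A^2 = \{0\}$, contradicting simplicity, so $\mathrm{Ann}(A) = \{0\}$ and $A$ is non-degenerate. This handles the first hypothesis cleanly.

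The main obstacle is establishing the linear independence of $\{e_i^2 : i \in \Lambda\}$, which is the more delicate of the two conditions. The natural approach is to consider the subspace $W = \mathrm{lin}\{e_i^2 : i \in \Lambda\}$, equivalently the column space of the structure matrix, and argue via an ideal. Fix a natural basis $B = \{e_i : i \in \Lambda\}$; for each $i$ we have $e_i^2 = \sum_k \omega_{ki} e_k$, and by non-degeneracy no $e_i^2$ vanishes. If the family $\{e_i^2\}$ were linearly dependent, I would aim to produce a nontrivial proper ideal, contradicting simplicity. The cleanest route is to observe that $A^2 = \mathrm{lin}\{e_i^2 : i \in \Lambda\}$ is itself an ideal (in an evolution algebra the product of any basis element with anything lands in the span of the $e_k^2$), and that for a simple algebra $A^2 = A$; combined with the fact that the $e_i^2$ span a space of dimension $\dim A$ forces them to be a basis, hence linearly independent. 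The point requiring care is verifying that $A^2$ is indeed an ideal and that $\dim A^2 = \dim A$ is forced — for this one uses $A^2 \neq \{0\}$ together with simplicity to conclude $A^2 = A$, whence the spanning set $\{e_i^2\}$ of $A$ must have full cardinality and be independent.

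With both hypotheses of Proposition \ref{relate} secured, the conclusion is immediate: any two natural bases of $A$ are related. I expect the non-degeneracy step to be routine, and I expect the linear-independence step to be the crux, since it is where simplicity must be converted into a structural statement about the columns of the structure matrix; the subtlety is making sure the argument works uniformly in the possibly infinite-dimensional case, where one must be careful that $A^2$ is genuinely the full linear span and that the equality $A^2 = A$ legitimately yields independence of the generating family.
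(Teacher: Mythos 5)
Your overall strategy---verify the two hypotheses of Proposition \ref{relate} (non-degeneracy and linear independence of $\{e_i^2 : i \in \Lambda\}$) and then apply it---is exactly the paper's strategy, and your non-degeneracy argument is correct: $\mathrm{Ann}(A)$ is an ideal, simplicity forces it to be $\{0\}$ or $A$, and the latter contradicts $A^2 \neq \{0\}$. The difference is that the paper obtains all three facts (non-degeneracy, linear independence of $\{e_i^2\}$, and $A = \mathrm{lin}\{e_i^2 : i \in \Lambda\}$) by citing Proposition 4.1 of \cite{Ca-Si-Ve}, whereas you attempt to derive them from scratch.

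The genuine gap is in your linear-independence step, precisely at the point you yourself flag as the subtlety. Your argument is: $A^2 = \mathrm{lin}\{e_i^2 : i \in \Lambda\}$ is a nonzero ideal, hence $A^2 = A$ by simplicity, hence the spanning family $\{e_i^2 : i \in \Lambda\}$ ``must have full cardinality and be independent.'' That last inference is valid only in finite dimensions, where a spanning set of $n$ vectors in an $n$-dimensional space is automatically a basis. In infinite dimensions it is false: if $\{v_n : n \in \mathbb{N}\}$ is a basis of a space $V$, the family $\{v_1,\, v_1+v_2,\, v_2,\, v_3,\, v_4, \dots\}$ spans $V$ and has cardinality equal to $\dim V$, yet it is linearly dependent. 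So from $A^2 = A$ alone you cannot conclude that $\{e_i^2\}$ is independent when $\Lambda$ is infinite---and the corollary is stated (and used in the paper) for evolution algebras of arbitrary dimension. Closing this gap is not a routine patch: one must show directly that a nontrivial dependence relation among the $e_i^2$ produces a proper nonzero ideal, which is exactly the nontrivial content the paper imports from \cite[Proposition 4.1]{Ca-Si-Ve} rather than re-proving. As written, your proof establishes the corollary only in the finite-dimensional case.
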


\begin{proof}
If $A$ is a simple evolution algebra with a natural basis $B:=\{e_{i}:i\in
\Lambda \}$ then, from \cite[Proposition 4.1]{Ca-Si-Ve}, it follows that $A$
is non-degenerate, that $\{e_{i}^{2}:i\in \Lambda \}$ is a linearly
independent set, and that $A=lin\{e_{i}^{2}:i\in \Lambda \},$ and hence the
above proposition applies.
\end{proof}

The following result is a direct consequence of Proposition \ref{relate}.

\begin{corollary}
If $A$ is a non-degenerate finite dimensional evolution algebra and $B$ is a
natural basis such that $\det M_{B}(A)\neq 0$ then every two natural basis
of $A$ are related.
\end{corollary}

Particular examples of algebras satisfying the hypothesis of the above
result are finite-dimensional simple evolution algebras (see \cite[Corollary
4.10]{Ca-Si-Ve}).

According with \cite[p. 18, Remark 2]{Tian} we introduce the next definition.

\begin{definition}
\emph{Let }$A$\emph{\ be an evolution algebra and }$B:=\{e_{i}:i\in \Lambda
\}$\emph{\ a natural basis of }$A.$\emph{\ We say that }$A$\emph{\ is a%
\textbf{\ non-zero trivial evolution algebra} if }$e_{i}^{2}=\omega
_{ii}e_{i}$\emph{\ with }$\omega _{ii}\neq 0,$\emph{\ for every }$i\in
\Lambda .$ \emph{This means that the structure matrix }$M_{B}(A)$ \emph{is
diagonal and that }$A$\emph{\ is non-degenerate (i.e. }$M_{B}(A)$ \emph{%
diagonal with non-zero entries in the diagonal).}
\end{definition}

\begin{remark}
\label{ulti}\textrm{By definition, a non-zero trivial evolution algebra is
non-degenerate. Therefore, from Proposition \ref{relate}, it follows that
the property of being a non-zero trivial evolution algebra does not depend
on the natural basis considered. }
\end{remark}

Nevertheless, we provide an example of an evolution algebra $A$ (obviously
degenerate)\ having two natural basis $B$ and $\widetilde{B}$ such that $%
M_{B}(A)~$\ is diagonal meanwhile $M_{\widetilde{B}}(A)$ is not (so that $B$
and $\widetilde{B}$ are not related).

\begin{example}
\label{diago}\emph{Let }$A$\emph{\ be the evolution algebra with natural
basis }$B:=\{e_{1},e_{2}\}$\emph{\ and product given by }$e_{1}^{2}=0$\emph{%
\ and }$e_{2}^{2}=e_{2}.$\emph{\ Consider the natural basis }$\widetilde{B}%
:=\{u_{1}.u_{2}\}$ \emph{\ where }$u_{1}=e_{1}$\emph{,}$\ $\emph{\ and }$%
u_{2}=e_{1}+e_{2}.$\emph{\ Then we have that the structure matrix }$%
M_{B}(A)=\left( 
\begin{array}{cc}
0 & 0 \\ 
0 & 1%
\end{array}%
\right) $ \emph{is diagonal meanwhile }$M_{\widetilde{B}}(A)=\left( 
\begin{array}{rr}
0 & -1 \\ 
0 & 1%
\end{array}%
\right) $\emph{\ is not diagonal.}
\end{example}

From Proposition \ref{unidad} we obtain the following result.

\begin{corollary}
\label{anterior}An evolution algebra $A$ has a unit if and only if $\,A$ is
a finite-dimensional non-zero trivial evolution algebra.
\end{corollary}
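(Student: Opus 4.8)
The plan is to read this corollary directly off Proposition \ref{unidad}, so the proof reduces to matching the conditions appearing there with the definition of a non-zero trivial evolution algebra. Both implications fall out with essentially no new computation; the only genuine point of care is a quantifier discrepancy, which I address at the end.

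For the forward implication I would assume that $A$ has a unit and argue in two steps. First, Proposition \ref{unidad}(i) immediately forces $\dim A<\infty$, since an infinite-dimensional evolution algebra cannot possess a unit. With finite dimensionality secured, I invoke the necessity half of Proposition \ref{unidad}(ii): for every natural basis $B=\{e_1,\dots,e_n\}$ of $A$ we must have $e_i^2=\omega_{ii}e_i$ with $\omega_{ii}\neq 0$ for each $i=1,\dots,n$. This is verbatim the defining condition for $A$ to be a non-zero trivial evolution algebra (the structure matrix $M_B(A)$ is diagonal with nonzero diagonal entries), so $A$ is a finite-dimensional non-zero trivial evolution algebra.

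For the converse I would start from a finite-dimensional non-zero trivial evolution algebra and fix a natural basis $B=\{e_1,\dots,e_n\}$ witnessing $e_i^2=\omega_{ii}e_i$ with $\omega_{ii}\neq 0$ for all $i$. Then the sufficiency half of Proposition \ref{unidad}(ii) applies and explicitly produces the element $e=\tfrac{1}{\omega_{11}}e_1+\cdots+\tfrac{1}{\omega_{nn}}e_n$ as a unit of $A$, completing the equivalence.

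The step deserving attention, and the only possible obstacle, is that Proposition \ref{unidad}(ii) is phrased with a universal quantifier (\emph{for every} natural basis), whereas the definition of a non-zero trivial evolution algebra only asserts the diagonal-with-nonzero-entries condition relative to \emph{some} natural basis. I would resolve this by appealing to Remark \ref{ulti}, which — via the basis-independence established in Proposition \ref{relate} — guarantees that the property of being a non-zero trivial evolution algebra does not depend on the chosen natural basis. Hence the existential and universal formulations coincide, and the two halves of the argument fit together without friction.
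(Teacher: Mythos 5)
Your proof is correct and takes essentially the same route as the paper, which states this corollary as an immediate consequence of Proposition \ref{unidad} with no further argument. Your explicit handling of the some-basis/every-basis quantifier mismatch via Remark \ref{ulti} is a sound refinement of a point the paper leaves implicit (alternatively, one could observe that the sufficiency direction needs only a single natural basis, since the unit $e=\frac{1}{\omega_{11}}e_{1}+\cdots+\frac{1}{\omega_{nn}}e_{n}$ is constructed directly from it).
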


\begin{remark}
\textrm{In \ \cite[Section 3.1.2, Proposition 1]{Tian} it is established
that "an evolution algebra has a unitary element if and only if it is a
non-zero trivial evolution algebra". Comparing this result with Proposition %
\ref{unidad} we conclude that the finite-dimension of the algebra seems to
be implicitly assumed. }
\end{remark}

\begin{corollary}
\label{unidad2}Let $A$ be an evolution algebra and $B$ a natural basis of $%
A. $ Then the following assertions are equivalent:

$\mathrm{(i)}$ $A$ has a unit.

$\mathrm{(ii)}$\ $A$ is finite-dimensional, $M_{B}(A)$ is diagonal, and has
non-zero entries.

$\mathrm{(iii)}\ A$ finite-dimensional, non-degenerated, and the structure
matrix $M_{B}(A)$ is diagonal.
\end{corollary}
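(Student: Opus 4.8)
The plan is to prove Corollary \ref{unidad2} by establishing a cycle of implications $\mathrm{(i)}\Rightarrow\mathrm{(ii)}\Rightarrow\mathrm{(iii)}\Rightarrow\mathrm{(i)}$, invoking Proposition \ref{unidad} (and Corollary \ref{anterior}) for the substantive content. The key observation is that all three conditions are, in essence, reformulations of the single notion of being a finite-dimensional non-zero trivial evolution algebra, so the work consists in checking that the phrasing in terms of the fixed basis $B$ matches the basis-independent characterization already obtained.

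First I would prove $\mathrm{(i)}\Rightarrow\mathrm{(ii)}$. Assuming $A$ has a unit, Proposition \ref{unidad}$\mathrm{(i)}$ forces $\dim A<\infty$, and then Proposition \ref{unidad}$\mathrm{(ii)}$ (applied to the natural basis $B$, which is among the ``every natural basis'' covered by that statement) gives $e_i^2=\omega_{ii}e_i$ with $\omega_{ii}\neq 0$ for all $i$; this says exactly that $M_B(A)$ is diagonal with non-zero diagonal entries, which is $\mathrm{(ii)}$. Next, $\mathrm{(ii)}\Rightarrow\mathrm{(iii)}$ is immediate: if $M_B(A)$ is diagonal with non-zero entries then in particular $M_B(A)$ is diagonal, and each $e_i^2=\omega_{ii}e_i\neq 0$, so $A$ is non-degenerate; finite-dimensionality is carried along from $\mathrm{(ii)}$.

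The one step requiring a small argument is $\mathrm{(iii)}\Rightarrow\mathrm{(i)}$, and this is where I would be most careful. Here I assume $A$ is finite-dimensional, non-degenerate, and $M_B(A)$ is diagonal. Diagonality of $M_B(A)$ means $e_i^2=\omega_{ii}e_i$ for each $i$, and non-degeneracy means $e_i^2\neq 0$, whence $\omega_{ii}\neq 0$ for every $i$. Thus $A$ is a finite-dimensional non-zero trivial evolution algebra, and Corollary \ref{anterior} yields that $A$ has a unit (explicitly $e=\sum_i \omega_{ii}^{-1}e_i$), giving $\mathrm{(i)}$ and closing the cycle.

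The main (and only) obstacle is a subtle point about basis-dependence: conditions $\mathrm{(ii)}$ and $\mathrm{(iii)}$ are phrased for the \emph{fixed} basis $B$, whereas Proposition \ref{unidad}$\mathrm{(ii)}$ is phrased for \emph{every} natural basis. I would note that this causes no difficulty, because Remark \ref{ulti} guarantees that being a non-zero trivial evolution algebra is independent of the chosen natural basis; hence if the structure matrix is diagonal with non-zero entries in one natural basis, the algebra is non-zero trivial and the same holds in every natural basis. This justifies that verifying the conditions on the single fixed basis $B$ is equivalent to verifying them on all natural bases, so the equivalences are well-posed and the cycle of implications completes the proof.
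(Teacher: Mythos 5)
Your proof is correct and takes essentially the same route as the paper: both arguments reduce everything to Proposition \ref{unidad} / Corollary \ref{anterior} together with the elementary observation that a diagonal structure matrix has all diagonal entries non-zero exactly when the algebra is non-degenerate. The only differences are cosmetic --- you arrange the implications as a cycle $\mathrm{(i)}\Rightarrow\mathrm{(ii)}\Rightarrow\mathrm{(iii)}\Rightarrow\mathrm{(i)}$ where the paper proves the two biconditionals $\mathrm{(i)}\Leftrightarrow\mathrm{(ii)}$ and $\mathrm{(ii)}\Leftrightarrow\mathrm{(iii)}$, and you make explicit (via Remark \ref{ulti}) the basis-independence point that the paper leaves implicit.
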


\begin{proof}
$\mathrm{(i)\Longleftrightarrow (ii)}$ is clear from Corollary \ref{anterior}%
, and $\mathrm{(ii)\Longleftrightarrow (iii)}$ follows from the fact if $%
M_{B}(A)$ is diagonal then $A$ is non-degenerated (i.e. every column of $%
M_{B}(A)$ is non-zero) if and only if every entry in the diagonal is
non-zero.
\end{proof}

As usual, if $A$ is an algebra then we define the formal \textbf{unitization}
of $A$ as the algebra $A_{1}:=A\oplus \mathbb{K}\mathbf{1}$ endowed with the
product $(a+\lambda \mathbf{1})(b+\mu \mathbf{1})=ab+\lambda b+\mu a+\lambda
\mu \mathbf{1},$ to obtain an algebra $A_{1}$ with a unit, $\mathbf{1}$,
containing $A$ as an ideal (see \cite{Ve}).

Next, we prove that the unitization $A_{1}:=A\oplus \mathbb{K}\mathbf{1}$ of
an evolution algebra $A$ is an evolution algebra if, and only if, $A$ has a
unit.

\begin{lemma}
\label{Lemauni}If $A_{1}$ is a finite-dimensional evolution algebra and if $%
B_{1}$ is a natural basis of $A_{1},$ then $B_{1}:=B\cup \{\lambda (e-%
\mathbf{1)\}}$ for some $\lambda \neq 0,$ where $B:=\{e_{i}:i\in \Lambda \}$
is a natural basis of $A$ such that $e_{i}^{2}=\omega _{ii}e_{i},$ with $%
\omega _{ii}\neq 0,$ for every $i\in \Lambda ,$ and $e=\sum_{i\in \Lambda }%
\frac{1}{\omega i}e_{i}$ is a unit of $A.$
\end{lemma}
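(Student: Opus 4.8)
The plan is to exploit that $\mathbf{1}$ is a unit of $A_{1}$. Since $A_{1}$ is assumed to be a finite-dimensional evolution algebra possessing a unit, Corollary \ref{anterior} forces $A_{1}$ to be a non-zero trivial evolution algebra. Fixing any natural basis of $A_{1}$ and rescaling each vector by the reciprocal of its (non-zero) diagonal structure constant, I obtain a family $\{g_{j}:j=1,\ldots ,m\}$ of orthogonal idempotents, that is $g_{i}g_{j}=0$ for $i\neq j$ and $g_{j}^{2}=g_{j}$, with $\sum_{j=1}^{m}g_{j}=\mathbf{1}$. In particular $A_{1}$ is associative and commutative and, via the assignment sending each $g_{j}$ to the $j$-th coordinate vector, is isomorphic as an algebra to $\mathbb{K}^{m}$ with componentwise product.

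Next I would locate the ideal $A$ inside this picture. By construction of the unitization, $A_{1}=A\oplus \mathbb{K}\mathbf{1}$, so $A$ is an ideal of $A_{1}$ of codimension one. Writing such a codimension-one subspace as $\{x\in \mathbb{K}^{m}:\sum_{j}c_{j}x_{j}=0\}$ and imposing the ideal condition against the idempotents $g_{k}$ (which yields $c_{k}x_{k}=0$ for every $x\in A$) forces all but one of the $c_{j}$ to vanish; hence $A=lin\{g_{j}:j\neq j_{0}\}$ for a unique index $j_{0}$. Consequently $e:=\mathbf{1}-g_{j_{0}}=\sum_{j\neq j_{0}}g_{j}$ is a unit for $A$, so $A$ is itself a finite-dimensional non-zero trivial evolution algebra, with natural basis the idempotents $g_{j}$, $j\neq j_{0}$.

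Finally I would pass to the arbitrary natural basis $B_{1}$ named in the statement. Because $A_{1}$ is non-degenerate (Remark \ref{ulti}) and the set $\{g_{j}^{2}:j=1,\ldots ,m\}$ is linearly independent, Proposition \ref{relate} guarantees that $B_{1}$ is related to $\{g_{1},\ldots ,g_{m}\}$; thus each element of $B_{1}$ is a non-zero scalar multiple of exactly one $g_{j}$. Exactly one of these elements, say $c\,g_{j_{0}}$, fails to lie in $A$, and since $g_{j_{0}}=\mathbf{1}-e$ it equals $\lambda (e-\mathbf{1})$ with $\lambda =-c\neq 0$; the remaining $m-1$ vectors constitute a natural basis $B=\{e_{i}:i\in \Lambda \}$ of $A$ with $e_{i}^{2}=\omega _{ii}e_{i}$, $\omega _{ii}\neq 0$, and $e=\sum_{i\in \Lambda }\frac{1}{\omega _{ii}}e_{i}$, as claimed. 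The routine portion is the passage from ``non-zero trivial'' to the orthogonal-idempotent model; the two points requiring care are the identification of a codimension-one ideal with a coordinate hyperplane and, through the related-bases machinery, the singling out of precisely the basis vector proportional to $e-\mathbf{1}$.
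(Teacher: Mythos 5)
Your proof is correct, but it takes a genuinely different route from the paper's. The paper argues inside the splitting $A_{1}=A\oplus \mathbb{K}\mathbf{1}$: each element of $B_{1}$ is written as $e_{i}+\lambda _{i}\mathbf{1}$ with $e_{i}\in A$, and since the $\mathbf{1}$-component of $(e_{i}+\lambda _{i}\mathbf{1})(e_{j}+\lambda _{j}\mathbf{1})$ is $\lambda _{i}\lambda _{j}$, orthogonality of the natural basis forces exactly one index $i_{0}$ with $\lambda _{i_{0}}\neq 0$; the remaining vectors automatically form a natural basis $B$ of $A$, and Corollary \ref{unidad2} together with the relations $e_{i}(e_{i_{0}}+\lambda _{i_{0}}\mathbf{1})=0$ gives, by solving for the coefficients of $e_{i_{0}}$ in terms of $B$, that $e_{i_{0}}=-\lambda _{i_{0}}e$, i.e. the exceptional vector is $\lambda (e-\mathbf{1})$. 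You instead argue structurally: Corollary \ref{anterior} identifies $A_{1}$ with $\mathbb{K}^{m}$ under the componentwise product (orthogonal idempotents $g_{j}$ summing to $\mathbf{1}$); the codimension-one ideal $A$ is then necessarily a coordinate hyperplane $lin\{g_{j}:j\neq j_{0}\}$ (your observation that two non-vanishing coefficients $c_{j}$ would force codimension at least two is the key point, and it is sound); and Proposition \ref{relate}, whose hypotheses hold because the $g_{j}=g_{j}^{2}$ are linearly independent, carries the conclusion over to the arbitrary basis $B_{1}$. What each buys: your argument is more conceptual --- the lemma appears as a corollary of the classification of unital evolution algebras, of their ideals, and of the essential uniqueness of natural bases --- and it reuses Proposition \ref{relate}, which the paper proves but never invokes in its own proof of this lemma; the paper's argument is a self-contained coordinate computation that needs only Corollary \ref{unidad2}. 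One streamlining of your route: since $A_{1}$ is unital and finite-dimensional, Corollary \ref{unidad2} applied to $B_{1}$ itself already gives $u^{2}=\mu _{u}u$ with $\mu _{u}\neq 0$ for every $u\in B_{1}$, so you could take the idempotents $g_{j}$ to be rescalings of the elements of $B_{1}$ from the outset and avoid Proposition \ref{relate} entirely.
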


\begin{proof}
Let $A$ be a finite-dimensional algebra such that $A_{1}:=A\oplus \mathbb{K}%
\mathbf{1}$ is an evolution algebra. Let $B_{1}:=\{e_{i}+\lambda _{i}$ $%
\mathbf{1:}$ $i\in \Lambda _{1}\}$ be a natural basis of $A_{1}.$ Since for $%
i\neq j,$%
\begin{equation*}
(e_{i}+\lambda _{i}\mathbf{1)}(e_{j}+\lambda _{j}\mathbf{1)}=0,
\end{equation*}%
we have that there is a unique $i_{0}\in \Lambda _{1}$ such that $\lambda
_{i_{0}}\neq 0.$ Indeed, if $i,j\in \Lambda _{1}$ with $i\neq j$  are such
that $\lambda _{i}\lambda _{j}\neq 0$ then $(e_{i}+\lambda _{i}$ $\mathbf{1)}%
(e_{j}+\lambda _{j}$ $\mathbf{1)}\neq 0,$ a contradiction. Therefore, for $%
\Lambda :=\Lambda _{1}\backslash \{i_{0}\},$ the natural basis $B_{1}$ can
be written as follows,%
\begin{equation*}
B_{1}:=\{e_{i}:i\in \Lambda \}\cup \{e_{i_{0}}+\lambda _{i_{0}}\mathbf{1\},}
\end{equation*}%
where $\lambda _{i_{0}}\neq 0.$ Consequently $A$ is spanned by $\{e_{i}:i\in
\Lambda \},$ and hence $A$ is an evolution algebra and $B:=\{e_{i}:i\in
\Lambda \}$ a natural basis of \ $A.$

On the other hand, since \thinspace $A_{1}$ is an evolution algebra with a
unit $\mathbf{1}$, by Corollary \ref{unidad2}, for every $i\in \Lambda ,$
there exists $\omega _{ii}\neq 0$ such that $e_{i}^{2}=\omega _{ii}e_{i},$
as well as $\omega _{i_{0}i_{0}}\neq 0$ such that $(e_{i_{0}}+\lambda
_{i_{0}}\mathbf{1)}^{2}=\omega _{i_{0}i_{0}}(e_{i_{0}}+\lambda _{i_{0}}%
\mathbf{1).}$ It follows that $e=\sum_{i\in \Lambda }\frac{1}{\omega _{ii}}%
e_{i}$ is a unit for $A.$ Moreover, if $e_{i_{0}}=$ $\sum_{j\in \Lambda
}\beta _{j}e_{j}$ then, since $e_{i}(e_{i_{0}}+\lambda _{i_{0}}\mathbf{1)=}$ 
$0$ we have that 
\begin{equation*}
e_{i}=\frac{-1}{\lambda _{i_{0}}}e_{i}e_{i_{0}}=\frac{-1}{\lambda _{i_{0}}}%
e_{i}\sum_{j\in \Lambda }\beta _{j}e_{j}=\frac{-1}{\lambda _{i_{0}}}\beta
_{i}e_{i}^{2}=\frac{-1}{\lambda _{i_{0}}}\beta _{i}\omega _{ii}e_{i}.
\end{equation*}%
Therefore $\beta _{i}=-\frac{\lambda _{i_{0}}}{\omega _{ii}}$ for every $%
i\in \Lambda ,$ so that 
\begin{equation*}
e_{i_{0}}=\sum_{i\in \Lambda }\beta _{i}e_{i}=-\lambda _{i_{0}}\sum_{j\in
\Lambda }\frac{1}{\omega _{ii}}e_{i}=-\lambda _{i_{0}}e.
\end{equation*}%
From the equality $(e_{i_{0}}+\lambda _{i_{0}}\mathbf{1)}^{2}=\omega
_{i_{0}i_{0}}(e_{i_{0}}+\lambda _{i_{0}}\mathbf{1),}$ we obtain that $\omega
_{i_{0}i_{0}}=\lambda _{i_{0}}.$ Thus \ $B_{1}:=\{e_{i}:i\in \Lambda \}\cup
\{-\lambda _{i_{0}}(e-\mathbf{1)\},}$ where $\lambda _{i_{0}}\neq 0$ and $%
B:=\{e_{i}:i\in \Lambda \}$ is a natural basis of $A$ such that $%
e_{i}^{2}=\omega _{ii}e_{i},$ with $\omega _{ii}\neq 0,$ for every $i\in
\Lambda ,$ and $e=\sum_{i\in \Lambda }\frac{1}{\omega _{ii}}e_{i}$ is a unit
of $A,$ as desired$.$
\end{proof}

\begin{proposition}
Let $A$ be an algebra and let $A_{1}$ its unitization. Then the following
assertions are equivalent:

$\mathrm{(i)}$ $A_{1}$ is an evolution algebra.

$\mathrm{(ii)}$ $A$ is an evolution algebra with a unit.

$\mathrm{(iii)}$ $A$ is a finite-dimensional non-zero trivial evolution
algebra.

$\mathrm{(iv)}$ $A_{1}$ is a finite-dimensional non-zero trivial evolution
algebra.
\end{proposition}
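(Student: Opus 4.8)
The plan is to prove the proposition by establishing a cycle of implications. I would show $\mathrm{(iii)}\Longrightarrow\mathrm{(ii)}$, then $\mathrm{(ii)}\Longrightarrow\mathrm{(i)}$, then $\mathrm{(i)}\Longrightarrow\mathrm{(iv)}$, and finally $\mathrm{(iv)}\Longrightarrow\mathrm{(iii)}$, so that all four assertions become equivalent. The equivalence $\mathrm{(ii)}\Longleftrightarrow\mathrm{(iii)}$ is in fact immediate from Corollary \ref{anterior}, which states that an evolution algebra has a unit precisely when it is a finite-dimensional non-zero trivial evolution algebra; so the genuine content lies in linking these with the statements about $A_{1}$.

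First I would prove $\mathrm{(ii)}\Longrightarrow\mathrm{(i)}$. Assume $A$ is an evolution algebra with a unit. By Corollary \ref{anterior}, $A$ is a finite-dimensional non-zero trivial evolution algebra, so it has a natural basis $B:=\{e_{i}:i\in\Lambda\}$ with $e_{i}^{2}=\omega_{ii}e_{i}$ and $\omega_{ii}\neq0$, and unit $e=\sum_{i\in\Lambda}\frac{1}{\omega_{ii}}e_{i}$. The strategy is to exhibit an explicit natural basis of $A_{1}=A\oplus\mathbb{K}\mathbf{1}$. Motivated by Lemma \ref{Lemauni}, I would set
\begin{equation*}
B_{1}:=\{e_{i}:i\in\Lambda\}\cup\{\,e-\mathbf{1}\,\}.
\end{equation*}
One checks that $B_{1}$ is a basis of $A_{1}$, that $e_{i}(e-\mathbf{1})=e_{i}e-e_{i}=e_{i}-e_{i}=0$ using that $e$ is the unit of $A$, that $e_{i}e_{j}=0$ for $i\neq j$, and that $(e-\mathbf{1})^{2}=e^{2}-2e+\mathbf{1}=e-2e+\mathbf{1}=-(e-\mathbf{1})$ since $e^{2}=e$. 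Thus $B_{1}$ is a natural basis and $A_{1}$ is an evolution algebra. Since each square $e_{i}^{2}=\omega_{ii}e_{i}$ and $(e-\mathbf{1})^{2}=-(e-\mathbf{1})$ is a nonzero multiple of the corresponding basis vector, $A_{1}$ is in fact a finite-dimensional non-zero trivial evolution algebra, which simultaneously gives $\mathrm{(iv)}$; hence $\mathrm{(ii)}\Longrightarrow\mathrm{(iv)}$ as well, and $\mathrm{(iv)}\Longrightarrow\mathrm{(i)}$ is trivial.

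For the reverse direction $\mathrm{(i)}\Longrightarrow\mathrm{(ii)}$, I would first observe that $A_{1}$ is automatically finite-dimensional: an evolution algebra with a unit must be finite-dimensional by Proposition \ref{unidad}(i), and $A_{1}$ has the unit $\mathbf{1}$. Now Lemma \ref{Lemauni} applies directly: it tells us that any natural basis of the finite-dimensional evolution algebra $A_{1}$ has the form $B\cup\{\lambda(e-\mathbf{1})\}$ where $B=\{e_{i}:i\in\Lambda\}$ is a natural basis of $A$ with $e_{i}^{2}=\omega_{ii}e_{i}$, $\omega_{ii}\neq0$, and $e=\sum_{i\in\Lambda}\frac{1}{\omega_{ii}}e_{i}$ is a unit of $A$. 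This shows $A$ is an evolution algebra with a unit, giving $\mathrm{(ii)}$, and by Corollary \ref{anterior} it is a finite-dimensional non-zero trivial evolution algebra, giving $\mathrm{(iii)}$. The main obstacle is therefore already dispatched by the earlier lemma; the only care needed is to justify at the outset that $\mathrm{(i)}$ forces finite-dimensionality, so that Lemma \ref{Lemauni} is genuinely applicable. With $\mathrm{(i)}\Longrightarrow\mathrm{(iii)}$, $\mathrm{(iii)}\Longleftrightarrow\mathrm{(ii)}$, $\mathrm{(ii)}\Longrightarrow\mathrm{(iv)}$, and $\mathrm{(iv)}\Longrightarrow\mathrm{(i)}$ all in hand, the four assertions are equivalent.
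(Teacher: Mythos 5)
Your proposal is correct and follows essentially the same route as the paper: the hard implication $\mathrm{(i)}\Rightarrow\mathrm{(ii)}$ is dispatched by first forcing $\dim A_{1}<\infty$ (the paper cites Corollary \ref{anterior}, you cite Proposition \ref{unidad}(i) — the same fact) and then invoking Lemma \ref{Lemauni}, while the converse direction is handled by the same explicit natural basis $B\cup\{\pm(\mathbf{1}-e)\}$ of $A_{1}$, whose structure matrix is diagonal with non-zero entries. The only differences are cosmetic: you arrange the implications in a slightly different cycle and verify the products in the constructed basis more explicitly than the paper does.
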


\begin{proof}
$\mathrm{(i)\Rightarrow (ii).}$ From Corollary \ref{anterior} we have that $%
\dim A_{1}<\infty $ and applying Lemma \ref{Lemauni} we obtain $\mathrm{(ii).%
}$ That $\mathrm{(ii)\Longleftrightarrow (iii)}$ and that $\mathrm{%
(i)\Longleftrightarrow (iv)}$ are obvious by Corollary \ref{anterior} \ To
prove $\mathrm{(iii)\Rightarrow (iv),}$ let $A$ be a\ non-zero trivial
evolution algebra and $B:=\{e_{i}:i\in \Lambda \}$ a natural basis. Then,
for every $i\in \Lambda ,$ let $\omega _{ii}\neq 0$ be such that $%
e_{i}^{2}=\omega _{ii}e_{i}.$ It follows that $e=\sum_{i\in \Lambda }\frac{1%
}{\omega _{i}}e_{i}$ is a unit of $A,$ and $B\cup \{\mathbf{1-}e\mathbf{\}}$
is a natural basis for $A_{1}$ whose structure matrix is diagonal with
non-zero entries, as $\mathbf{1-}e$ is idempotent.
\end{proof}

We conclude that except in very special cases, the unitization process is
incompatible with the property of being an evolution algebra.

\begin{corollary}
\label{unifinal}Every non-trivial evolution algebra does not have a unit,
and its unitization is not an evolution algebra.
\end{corollary}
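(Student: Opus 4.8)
The plan is to derive Corollary \ref{unifinal} directly from the characterizations already established, treating its two assertions in turn. The corollary claims that a non-trivial evolution algebra $A$ has no unit and that its unitization $A_1$ fails to be an evolution algebra. Here ``non-trivial'' must mean that $A$ is not a finite-dimensional non-zero trivial evolution algebra, since that is precisely the exceptional class singled out by the preceding results.

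First I would handle the existence of a unit. By Corollary \ref{anterior}, an evolution algebra $A$ has a unit if and only if $A$ is a finite-dimensional non-zero trivial evolution algebra. Contrapositively, if $A$ is not of this exceptional form---that is, if $A$ is non-trivial---then $A$ cannot have a unit. This is an immediate restatement, so no real work is needed; the content is entirely packaged into Corollary \ref{anterior}.

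For the second assertion, I would invoke the final Proposition (the one characterizing when $A_1$ is an evolution algebra). That proposition gives the equivalence $\mathrm{(i)}\Longleftrightarrow\mathrm{(iii)}$: the unitization $A_1$ is an evolution algebra if and only if $A$ is a finite-dimensional non-zero trivial evolution algebra. Again taking the contrapositive, if $A$ is non-trivial then $A_1$ is not an evolution algebra. So both halves follow by applying the contrapositive of a single equivalence each.

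The only genuine subtlety---and the step I would be most careful about---is making sure the term ``non-trivial'' is pinned down unambiguously, since the whole proof is just reading off the negation of condition $\mathrm{(iii)}$ in those two results. Once one agrees that a non-trivial evolution algebra is one that is \emph{not} a finite-dimensional non-zero trivial evolution algebra, the proof is a one-line appeal to Corollary \ref{anterior} together with the equivalence $\mathrm{(i)}\Longleftrightarrow\mathrm{(iii)}$ of the preceding Proposition. There is no computation to grind through; the corollary is a clean consequence of the machinery built up in Section 2.
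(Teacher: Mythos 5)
Your proposal is correct and coincides with the paper's own (implicit) argument: Corollary \ref{unifinal} is stated in the paper without a separate proof precisely because it is the contrapositive reading of Corollary \ref{anterior} together with the equivalence $\mathrm{(i)}\Longleftrightarrow \mathrm{(iii)}$ of the preceding Proposition, which is exactly what you invoke. Your care over the meaning of ``non-trivial'' is also well placed, and harmless: under any reasonable reading (``not a non-zero trivial evolution algebra'' or your stronger ``not a finite-dimensional non-zero trivial evolution algebra''), the hypothesis implies ``not finite-dimensional non-zero trivial,'' so the same two contrapositives give the conclusion.
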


\section{The Jacobson radical of  an evolution algebra}

The Jacobson radical of an associative algebra is the intersection of its
all primitive ideals (see for instance \cite{Dales}). This notion was
generalized to arbitrary non-associative algebras in \cite{Ma-Ve2} by
defining a \textbf{primitive ideal} as the biggest two sided ideal contained
in a maximal one-sided modular ideal. Since evolution algebras are
commutative, we avoid the adjectives "left" and "right" and we talk only
about "modular ideals" and "modular units".

We recall that an ideal $M$ of a commutative algebra $A$ is said to be a%
\textbf{\ modular ideal }$\ $if there exists $u\in A$ such that $a-au\in M,$
for every $a\in A$ (in other words $A(1-u)\subseteq M$). In this case, we
say that $u$ is a\textbf{\ modular unit} for $M.$ Therefore, according with 
\cite{Ma-Ve2}, we have the following definition.

\begin{definition}
\emph{Let }$A$\emph{\ be an evolution algebra. }$\emph{A}$ \emph{\textbf{%
primitive} ideal of }$A$\emph{\ is a maximal modular ideal, and the Jacobson
radical of }$A$\emph{\ is the intersection of all the primitive ideals of }$%
A.$
\end{definition}

Our next goal is to characterize the primitive ideals of an evolution
algebra.

\begin{theorem}
\label{cuadrado}Let $A$ be an evolution algebra, and $B:=\{e_{i}:i\in
\Lambda \}$ a natural basis of $A.$ Let $M$ be a modular ideal with support $%
\Lambda _{M}.$ Then:

$\mathrm{(i)}$ $e_{i}\in M$ if and only if $e_{i}^{2}\in M.$

$\mathrm{(ii)}$ $M=lin\{e_{i}:i\in \Lambda _{M}\}$. Consequently, $M$ is
proper if and only if $\Lambda _{M}\neq \Lambda .$

$\mathrm{(iii)}$ $\Lambda \backslash \Lambda _{M}$ is finite. Moreover, if $%
u $ is a modular unit for $M$ then $\Lambda _{M}\cup \Lambda _{u}=\Lambda .$

$\mathrm{(iv)}$ If $M$ is proper then $\omega _{ii}\neq 0,$ for every $i\in
\Lambda \backslash \Lambda _{M},$ and 
\begin{equation*}
u_{0}=\sum\limits_{i\in \Lambda \backslash \Lambda _{M}}\frac{1}{\omega _{ii}%
}e_{i}
\end{equation*}%
is a modular unit for $M$. In fact, $u\in A$ is a modular unit for $M$ if,
and only if, $u=m+u_{0}$ for some $m\in M.$
\end{theorem}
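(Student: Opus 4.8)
The engine of the whole proof is a single orthogonality computation. For any $u=\sum _{j}\mu _{j}e_{j}\in A$ the relations $e_{i}e_{j}=0$ $(i\neq j)$ give $e_{i}u=\mu _{i}e_{i}^{2}$, and likewise $e_{i}^{2}e_{i}=\omega _{ii}e_{i}^{2}$. Fixing a modular unit $u=\sum _{j}\mu _{j}e_{j}$ for $M$, the modularity condition $e_{i}-e_{i}u\in M$ thus becomes the workhorse identity $e_{i}-\mu _{i}e_{i}^{2}\in M$, valid for every $i\in \Lambda$, and I would derive all four parts from it.

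For $\mathrm{(i)}$ the forward implication is immediate because $M$ is an ideal, and for the converse I would add $\mu _{i}e_{i}^{2}\in M$ to the workhorse identity to recover $e_{i}\in M$. Part $\mathrm{(ii)}$ then follows: the inclusion $M\subseteq lin\{e_{i}:i\in \Lambda _{M}\}$ is just the definition of $\Lambda _{M}$, while for the reverse inclusion I would take, for $i\in \Lambda _{M}$, some $a=\sum _{j}\alpha _{j}e_{j}\in M$ with $\alpha _{i}\neq 0$ and compute $ae_{i}=\alpha _{i}e_{i}^{2}\in M$, so that $e_{i}^{2}\in M$ and hence $e_{i}\in M$ by $\mathrm{(i)}$. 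The ``consequently'' clause is then clear, since $\Lambda _{M}=\Lambda$ forces $M=A$.

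For $\mathrm{(iii)}$, if $i\notin \Lambda _{u}$ then $\mu _{i}=0$, so the workhorse identity reads $e_{i}\in M$, i.e. $i\in \Lambda _{M}$; contrapositively $\Lambda \backslash \Lambda _{M}\subseteq \Lambda _{u}$, which is finite and simultaneously yields $\Lambda _{M}\cup \Lambda _{u}=\Lambda$. The substantive step is $\mathrm{(iv)}$. Assuming $M$ proper and fixing $i\in \Lambda \backslash \Lambda _{M}$, we have $e_{i}\notin M$, hence $e_{i}^{2}\notin M$ by $\mathrm{(i)}$. Multiplying the workhorse identity by $e_{i}$ and using $e_{i}^{2}e_{i}=\omega _{ii}e_{i}^{2}$ gives $(1-\mu _{i}\omega _{ii})e_{i}^{2}\in M$; since $e_{i}^{2}\notin M$ this forces $\mu _{i}\omega _{ii}=1$, so $\omega _{ii}\neq 0$ and $\mu _{i}=\frac{1}{\omega _{ii}}$. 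I expect this passage to a scalar equation to be the main obstacle, since it is the one point where the non-associative product must be pinned down exactly.

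With $\mu _{i}=\frac{1}{\omega _{ii}}$ available, I would check that $u_{0}=\sum _{i\in \Lambda \backslash \Lambda _{M}}\frac{1}{\omega _{ii}}e_{i}$ --- a finite, well-defined element by $\mathrm{(iii)}$ --- is a modular unit by testing $e_{j}-e_{j}u_{0}\in M$ on basis vectors (linearity suffices, as $M$ is a subspace): for $j\in \Lambda _{M}$ this reads $e_{j}\in M$, true by $\mathrm{(ii)}$, and for $j\notin \Lambda _{M}$ it reproduces the workhorse identity. Finally, for the description of all modular units: if $u=m+u_{0}$ with $m\in M$ then $a-au=(a-au_{0})-am\in M$, so $u$ is modular; conversely, for an arbitrary modular unit $u=\sum _{j}\mu _{j}e_{j}$ the coefficients on $\Lambda \backslash \Lambda _{M}$ coincide with those of $u_{0}$ by the scalar equation just derived, whence $u-u_{0}=\sum _{j\in \Lambda _{M}}\mu _{j}e_{j}\in lin\{e_{j}:j\in \Lambda _{M}\}=M$ by $\mathrm{(ii)}$, giving $u=m+u_{0}$ with $m=u-u_{0}$.
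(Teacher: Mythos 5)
Your proof is correct and follows essentially the same route as the paper's: the key identity $e_{i}-e_{i}u=e_{i}-\mu _{i}e_{i}^{2}\in M$, multiplication by $e_{i}$ to force $(1-\mu _{i}\omega _{ii})e_{i}^{2}\in M$ and hence $\mu _{i}\omega _{ii}=1$ on $\Lambda \backslash \Lambda _{M}$, and the decomposition $u=m+u_{0}$ of an arbitrary modular unit. The only cosmetic difference is that you verify directly on basis vectors that $u_{0}$ is a modular unit, whereas the paper deduces this from the decomposition $u=m+u_{0}$ of the given modular unit $u$; the underlying computation is identical.
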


\begin{proof}
$\mathrm{(i)}$\ Obviously if $e_{i}\in M$ then $e_{i}^{2}\in M.$ Conversely,
suppose that $e_{i}^{2}\in M$ and let $u$ be a modular unit for $M$. Let $%
\pi _{i}(u)=\alpha _{i},$ where, as usual, $\pi _{i}$ denotes the projection
of $A$ on $\mathbb{K}e_{i}$. Since $e_{i}(\mathbf{1}-u)=e_{i}-\alpha
_{i}e_{i}^{2}\in M$ and $e_{i}^{2}\in M$ we have that\ $e_{i}\in M,$ as
desired.

$\mathrm{(ii)}$\ If $\pi _{i}(M)\neq \{0\}$ then $e_{i}^{2}\in M$ and, by $%
\mathrm{(i),}$\ we have that $e_{i}\in M$ so that 
\begin{equation*}
lin\{e_{i}:i\in \Lambda _{M}\}\subseteq M.
\end{equation*}%
Conversely, if $a=\sum\limits_{i\in \Lambda _{a}}\beta _{i}e_{i}\in M$ (with 
$\beta _{i}\neq 0$ for every \thinspace $i\in \Lambda _{a}$) then $\Lambda
_{a}\subseteq \Lambda _{M},$ so that $a\in lin\{e_{i}:i\in \Lambda _{M}\}.$
Thus $M=lin\{e_{i}:i\in \Lambda _{M}\},$ and the rest is clear$.$

$\mathrm{(iii)}$ If $u$ is a modular unit for $M$ and if $i\in (\Lambda
\backslash \Lambda _{M})\cap (\Lambda \backslash \Lambda _{u})$ then $%
e_{i}-e_{i}u=e_{i}\,$\ is in $M,$ a contradiction. It follows that $(\Lambda
\backslash \Lambda _{M})\subseteq \Lambda _{u}$ so that $\Lambda \backslash
\Lambda _{M}$ is finite. Moreover 
\begin{equation*}
\Lambda \backslash (\Lambda _{M}\cup \Lambda _{u})=(\Lambda \backslash
\Lambda _{M})\cap (\Lambda \backslash \Lambda _{u})=\emptyset ,
\end{equation*}%
that is $\Lambda _{M}\cup \Lambda _{u}=\Lambda .$

$\mathrm{(iv)}$ Let $u=\sum\limits_{i\in \Lambda _{u}}\alpha _{i}e_{i}$ \ be
a modular unit for $M.$ If $\Lambda _{u}\cap \Lambda _{M}=\emptyset $ \
then, take $m=0$ and, otherwise, let $m=\sum\limits_{i\in \Lambda _{u}\cap
\Lambda _{M}}\alpha _{i}e_{i}.$ Since $(\Lambda \backslash \Lambda
_{M})\subseteq \Lambda _{u}$ by $\mathrm{(iii),}$we obtain that 
\begin{equation*}
u=\sum\limits_{i\in \Lambda \backslash \Lambda _{M}}\alpha _{i}e_{i}+m,
\end{equation*}%
for some $m\in M.$ For $i\in \Lambda \backslash \Lambda _{M}$ we have that 
\begin{equation*}
e_{i}(e_{i}-e_{i}u)=(1-\alpha _{i}\omega _{ii})e_{i}^{2}\in M,
\end{equation*}%
so that $1-\alpha _{i}\omega _{ii}=0$ and hence $\omega _{ii}\neq 0$ and $%
\alpha _{i}=\frac{1}{\omega _{ii}}.$ Therefore, $u=m+u_{0}$ and consequently 
$u_{0}$ is a modular unit for $M$, as desired. This concludes the proof
because, obviously, $u_{0}+m$ is a modular unit of $M,$ for every $m\in M.$
\end{proof}

\bigskip

The set of descendents of a set of indexes\ is a concept introduced in \cite[%
Definitions 3.1]{Ca-Si-Ve} that will play an essential role in describing
the dynamic nature of an evolution algebra$.$

\begin{definition}
\label{descen}\emph{Let }$A$\emph{\ be an evolution algebra, and }$%
B:=\{e_{i}:i\in \Lambda \}$\emph{\ a natural basis of }$A.$ \emph{If
\thinspace }$e_{i}^{2}=\dsum\limits_{j\in \Lambda }\omega _{ji}e_{j}$\emph{\
then, the set of\ \textbf{first-generation descendents} of the index }$\
i\in \Lambda $\emph{\ is defined as }%
\begin{equation*}
D^{1}(i):=\Lambda _{e_{i}^{2}}=\{j\in \Lambda :\omega _{ji}\neq 0\}.
\end{equation*}%
\emph{Similarly, the set of \textbf{second-generation descendents} of }$\
i\in \Lambda $\emph{\ is given by }%
\begin{equation*}
D^{2}(i)=\dbigcup\limits_{j\in \Lambda _{e_{i}^{2}}}\Lambda
_{e_{j}^{2}}=\dbigcup\limits_{j\in D^{1}(i)}D^{1}(j).
\end{equation*}%
\emph{Similarly, the set of\ \textbf{nth-generation descendents} of }$\ i\in
\Lambda $\ \emph{is given by }%
\begin{equation*}
D^{n}(i)=\dbigcup\limits_{j\in D^{n-1}(i)}D^{1}(j).
\end{equation*}%
\emph{Finally the set of\ \textbf{descendents of the index} }$i$\emph{\ is
defined as \thinspace }%
\begin{equation*}
D(i)=\dbigcup\limits_{n\in \mathbb{N}}D^{n}(i).
\end{equation*}%
\emph{On the other hand, the set of descendents of a non-empty set }$\Lambda
_{0}\subseteq \Lambda $\emph{\ \ is defined as}%
\begin{equation*}
D(\Lambda _{0})=\dbigcup\limits_{i\in \Lambda _{0}}D(i).
\end{equation*}
\end{definition}

In \cite[Proposition 3.4]{Ca-Si-Ve} the set of descendents of an index in a
natural basis was characterized as follows.

\begin{proposition}
Let $B:=\{e_{i}\ :\ i\in \Lambda \}$ be a natural basis of an evolution
algebra $A,$ and let $M_{B}(A)=(\omega _{ij}).$ Consider $i_{0},j\in \Lambda 
$ and $m\geq 2$.

$\mathrm{(i)}$ $\ j\in D^{1}(i_{0})$ if and only if $\omega _{ji_{0}}\neq 0,$
in which case $e_{j}e_{i_{0}}^{2}=\omega _{ji_{0}}e_{j}^{2}.$

$\mathrm{(ii)}$ $\ j\in D^{m}(i_{0})$ if, and only if, there exist $%
k_{1},k_{2},\cdots ,k_{m-1}$ such that 
\begin{equation*}
\omega _{jk_{m-1}}\omega _{k_{m-1}k_{m-2}\cdots }\omega _{k_{2}k_{1}}\omega
_{k_{1}i_{0}}\neq 0,
\end{equation*}%
in which case $e_{j}^{2}=(\omega _{jk_{m-1}}\omega _{k_{m-1}k_{m-2}\cdots
}\omega _{k_{2}k_{1}}\omega
_{k_{1}i_{0}})^{-1}e_{j}e_{k_{m-1}}e_{k_{m-2}}\cdots
e_{k_{2}}e_{k_{1}}e_{i_{0}}^{2}.$
\end{proposition}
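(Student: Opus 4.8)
The plan is to treat the two parts in turn, with part (i) amounting to unwinding the definition and part (ii) proceeding by induction on $m$, with part (i) serving as the computational engine.

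For part (i), the equivalence $j\in D^1(i_0)\iff\omega_{ji_0}\neq 0$ is immediate from the definition $D^1(i_0)=\Lambda_{e_{i_0}^2}=\{j\in\Lambda:\omega_{ji_0}\neq 0\}$ together with $e_{i_0}^2=\sum_{k\in\Lambda}\omega_{ki_0}e_k$. For the product identity I would expand $e_j e_{i_0}^2 = e_j\sum_{k\in\Lambda}\omega_{ki_0}e_k = \sum_{k\in\Lambda}\omega_{ki_0}\,e_j e_k$ and invoke the defining property of a natural basis, namely $e_j e_k=0$ for $j\neq k$ and $e_j e_j=e_j^2$, so that only the $k=j$ term survives and $e_j e_{i_0}^2 = \omega_{ji_0}e_j^2$. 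Since this holds for \emph{every} pair of indices, I record it in the reusable form $e_p e_q^2=\omega_{pq}e_p^2$, which drives the argument for (ii).

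For part (ii) I proceed by induction on $m\geq 2$. In the base case $m=2$ the recursion $D^2(i_0)=\bigcup_{k\in D^1(i_0)}D^1(k)$ says that $j\in D^2(i_0)$ iff there is an index $k_1$ with $k_1\in D^1(i_0)$ and $j\in D^1(k_1)$, which by part (i) is exactly $\omega_{k_1 i_0}\neq 0$ and $\omega_{jk_1}\neq 0$, i.e. $\omega_{jk_1}\omega_{k_1 i_0}\neq 0$. For the inductive step I use $D^m(i_0)=\bigcup_{k\in D^{m-1}(i_0)}D^1(k)$: thus $j\in D^m(i_0)$ iff there is $k_{m-1}\in D^{m-1}(i_0)$ with $\omega_{jk_{m-1}}\neq 0$, and applying the induction hypothesis to $k_{m-1}\in D^{m-1}(i_0)$ produces indices $k_1,\dots,k_{m-2}$ with $\omega_{k_{m-1}k_{m-2}}\cdots\omega_{k_1 i_0}\neq 0$; concatenating yields the chain $\omega_{jk_{m-1}}\omega_{k_{m-1}k_{m-2}}\cdots\omega_{k_1 i_0}\neq 0$, and the converse reads the same chain backwards. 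For the accompanying formula I would evaluate the right-nested product $e_j\bigl(e_{k_{m-1}}(\cdots(e_{k_1}e_{i_0}^2)\cdots)\bigr)$ from the inside out, applying $e_p e_q^2=\omega_{pq}e_p^2$ at each stage: the innermost factor gives $e_{k_1}e_{i_0}^2=\omega_{k_1 i_0}e_{k_1}^2$, and since each intermediate result is again a scalar multiple of a square, the next step yields $e_{k_2}(e_{k_1}e_{i_0}^2)=\omega_{k_2 k_1}\omega_{k_1 i_0}e_{k_2}^2$, telescoping to $\omega_{jk_{m-1}}\cdots\omega_{k_1 i_0}\,e_j^2$. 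Dividing by the nonzero scalar $\omega_{jk_{m-1}}\cdots\omega_{k_1 i_0}$ gives the stated expression for $e_j^2$.

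The only point requiring genuine care is the non-associativity of $A$: the product of the $e$'s must be read with the right-nested parenthesization, so that each stage presents a factor of the form $e_p\,(\text{scalar}\cdot e_q^2)$, to which part (i) applies and returns another scalar multiple of a square. With a different grouping the telescoping would not obviously close, so I would fix this parenthesization explicitly before running the induction. Everything else is a routine unwinding of the recursive definition of $D^m$ against the product identity from (i).
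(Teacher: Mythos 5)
Your proof is correct. Note that this paper does not actually prove the proposition—it is quoted from \cite[Proposition 3.4]{Ca-Si-Ve}—so there is no in-paper argument to compare against; your treatment (part (i) by unwinding the definition of $D^1(i_0)=\Lambda_{e_{i_0}^2}$ and the natural-basis relations, part (ii) by induction on $m$ via the recursion $D^m(i_0)=\bigcup_{k\in D^{m-1}(i_0)}D^1(k)$, with the telescoping identity $e_pe_q^2=\omega_{pq}e_p^2$) is exactly the natural argument, and your explicit insistence on the right-nested parenthesization $e_j\bigl(e_{k_{m-1}}(\cdots(e_{k_1}e_{i_0}^2)\cdots)\bigr)$ is precisely the point that non-associativity requires.
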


\begin{corollary}
\label{new}Let $B:=\{e_{i}\ :\ i\in \Lambda \}$ be a natural basis of an
evolution algebra $A.$ Let $m,n\in \mathbb{N}$ be such that $m>n,$ and let $%
i_{0}\in \Lambda .$ Then, $k\in D^{m}(i_{0})$ if, and only if, $k\in D^{n}(j)
$ for some $j\in D^{m-n}(i_{0}).$ Consequently 
\begin{equation*}
D^{m}(i_{0})=\dbigcup\limits_{j\in D^{m-n}(i_{0})}D^{n}(j).
\end{equation*}
\end{corollary}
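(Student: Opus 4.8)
The plan is to read the statement as a \emph{transitivity} (path-concatenation) property of the descendent structure: $k\in D^{m}(i_{0})$ should express that $k$ is reached from $i_{0}$ in exactly $m$ generations, so such a $k$ ought to be reachable through an intermediate index $j$ lying $m-n$ generations below $i_{0}$ and $n$ generations above $k$. Since the asserted ``if and only if'' is exactly the membership form of the displayed set-equality $D^{m}(i_{0})=\bigcup_{j\in D^{m-n}(i_{0})}D^{n}(j)$, it suffices to prove that union identity, and everything then reduces to manipulating iterated unions coming from Definition \ref{descen}.

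The one mismatch to overcome is that the recursion $D^{p}(i)=\bigcup_{l\in D^{p-1}(i)}D^{1}(l)$ of Definition \ref{descen} peels off the \emph{last} generation, whereas the target equality most naturally splits off the \emph{first} $m-n$ generations. So first I would record the dual ``first-step'' identity
\[
D^{p+1}(i)=\bigcup_{j\in D^{1}(i)}D^{p}(j)
\]
for every $p\ge 1$ and every $i\in\Lambda$. I would prove it by induction on $p$: the case $p=1$ is exactly the definition of $D^{2}(i)$, and for the inductive step I would expand $D^{p+2}(i)=\bigcup_{l\in D^{p+1}(i)}D^{1}(l)$, substitute the inductive hypothesis for $D^{p+1}(i)$, interchange the order of the two resulting unions, and recognize the inner union $\bigcup_{l\in D^{p}(j)}D^{1}(l)$ as $D^{p+1}(j)$. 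This is a routine reindexing.

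With the first-step identity in hand, I would prove the main equality by induction on $n$, for fixed but arbitrary $m>n$ and $i_{0}\in\Lambda$. The base case $n=1$ is precisely the definition of $D^{m}(i_{0})$. For the step from $n$ to $n+1$ (so $m>n+1$), I would start from the inductive hypothesis $D^{m}(i_{0})=\bigcup_{j\in D^{m-n}(i_{0})}D^{n}(j)$, rewrite $D^{m-n}(i_{0})=\bigcup_{l\in D^{m-n-1}(i_{0})}D^{1}(l)$ by the definition (legitimate since $m-n\ge 2$), swap the nested unions, and collapse $\bigcup_{j\in D^{1}(l)}D^{n}(j)$ to $D^{n+1}(l)$ using the first-step identity; since $m-n-1=m-(n+1)$ this is exactly the claim for $n+1$.

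I expect no genuine obstacle here: once the first-step identity is isolated, both inductions amount to bookkeeping over associating and commuting unions. The only points that require a little care are the direction of peeling, which is handled by the auxiliary identity, and the tracking of the index ranges ($m>n$, and $m-n\ge 2$ in the inductive step) so that every $D^{p}$ that appears has $p\ge 1$ and the definitional recursion may legitimately be applied.
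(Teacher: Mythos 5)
Your proposal is correct, but it follows a genuinely different route from the paper. The paper deduces the corollary in one stroke from the proposition immediately preceding it (the characterization $k\in D^{m}(i_{0})$ iff there exist indices $k_{1},\dots ,k_{m-1}$ with $\omega _{kk_{m-1}}\omega _{k_{m-1}k_{m-2}}\cdots \omega _{k_{1}i_{0}}\neq 0$): a chain of length $m$ from $i_{0}$ to $k$ exists if and only if it can be cut at an intermediate index $j$ into a chain of length $m-n$ from $i_{0}$ to $j$ and a chain of length $n$ from $j$ to $k$. Your argument never invokes structure constants at all; it is a purely set-theoretic double induction on the recursion $D^{p}(i)=\bigcup_{l\in D^{p-1}(i)}D^{1}(l)$, with the auxiliary ``first-step'' identity $D^{p+1}(i)=\bigcup_{j\in D^{1}(i)}D^{p}(j)$ doing the work of reversing the direction of peeling. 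Both inductions check out (the base cases are literally the definition, and the inductive steps are legitimate exchanges of nested unions), and your bookkeeping of the constraints $m>n$ and $m-n\geq 2$ is correct. What the paper's approach buys is brevity, at the cost of leaning on the chain characterization and of being slightly loose in the degenerate cases $n=1$ and $m-n=1$, where the intermediate-index lists are empty and the splitting must fall back on the first-generation characterization; what your approach buys is self-containedness and uniformity — it is really the semigroup law $D^{a+b}(i)=\bigcup_{j\in D^{a}(i)}D^{b}(j)$ for composition of relations, valid for any recursively defined descendent structure, with no exceptional cases to track.
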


\begin{proof}
The result follows from the above result and the fact that there exists $%
k_{1},k_{2},\cdots ,k_{m-1}$ such that $\omega _{kk_{m-1}}\omega
_{k_{m-1}k_{m-2}\cdots }\omega _{k_{2}k_{1}}\omega _{k_{1}i_{0}}\neq 0,$ if
and only if there exists $k_{1},k_{2},\cdots ,k_{n-1}$ such that 
\begin{equation*}
\omega _{kk_{n-1}}\omega _{k_{n-1}k_{n-2}\cdots }\omega _{k_{2}k_{1}}\omega
_{k_{1}j}\neq 0,
\end{equation*}%
and $\widetilde{k}_{1},\widetilde{k}_{2}\cdots ,\widetilde{k}_{m-n-1}$ such
that $\omega _{j\widetilde{k}_{m-n-1}}\omega _{\widetilde{k}_{m-n-1}%
\widetilde{k}_{m-n-2}\cdots }\omega _{\widetilde{k}_{2}\widetilde{k}%
_{1}}\omega _{\widetilde{k}_{1}i_{0}}\neq 0.$
\end{proof}

\begin{proposition}
Let $A$ be an evolution algebra, and $B:=\{e_{i}:i\in \Lambda \}$ a natural
basis of $A.$ If $M$ is a non-zero modular ideal with support $\Lambda _{M}$
then, $D(i)\backslash \{i\}\subseteq \Lambda _{M}$, for every $i\in \Lambda
\backslash \Lambda _{M}.$
\end{proposition}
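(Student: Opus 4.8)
The plan is to reduce to the case that $M$ is proper (if $M=A$ then $\Lambda_M=\Lambda$ and $\Lambda\setminus\Lambda_M=\emptyset$, so there is nothing to prove) and then to prove by induction on $n$ that $D^n(i)\setminus\{i\}\subseteq\Lambda_M$ for every $i\in\Lambda\setminus\Lambda_M$; taking the union over $n$ yields the assertion. The workhorse throughout is part $\mathrm{(ii)}$ of Theorem \ref{cuadrado}, namely $M=lin\{e_k:k\in\Lambda_M\}$, which says that an element $\sum_k\beta_k e_k$ belongs to $M$ if and only if its support is contained in $\Lambda_M$; thus membership in $M$ can be read off coefficient by coefficient.

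First I would record a closure property: the set $\Lambda_M$ is stable under taking descendents. Indeed, if $j\in\Lambda_M$ then $e_j\in M$, and since $M$ is an ideal $e_j^2=e_je_j\in M$; comparing the support of $e_j^2=\sum_k\omega_{kj}e_k$ with $M=lin\{e_k:k\in\Lambda_M\}$ gives $D^1(j)\subseteq\Lambda_M$. Iterating this, $D^n(j)\subseteq\Lambda_M$ for all $n$, hence $D(j)\subseteq\Lambda_M$ whenever $j\in\Lambda_M$.

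The base case $n=1$ is where the modular structure really enters. Fix $i\in\Lambda\setminus\Lambda_M$, so $e_i\notin M$, and let $u=\sum_k\alpha_k e_k$ be a modular unit for $M$ (for instance the $u_0$ of Theorem \ref{cuadrado}$\mathrm{(iv)}$). Since $e_ie_k=0$ for $k\neq i$, one has $e_iu=\alpha_ie_i^2$, so $e_i-\alpha_ie_i^2\in M$. Expanding $e_i^2=\omega_{ii}e_i+\sum_{j\in D^1(i)\setminus\{i\}}\omega_{ji}e_j$ gives
\[
(1-\alpha_i\omega_{ii})e_i-\alpha_i\sum_{j\in D^1(i)\setminus\{i\}}\omega_{ji}e_j\in M.
\]
Because $i\notin\Lambda_M$, the coefficient of $e_i$ must vanish (otherwise the support of this element would contain $i\notin\Lambda_M$), forcing $\alpha_i\omega_{ii}=1$; in particular $\alpha_i\neq 0$, so the remaining sum $\sum_{j\in D^1(i)\setminus\{i\}}\omega_{ji}e_j$ lies in $M$. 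Its support is precisely $D^1(i)\setminus\{i\}$, whence $D^1(i)\setminus\{i\}\subseteq\Lambda_M$.

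For the inductive step I would assume $D^n(i)\setminus\{i\}\subseteq\Lambda_M$ and use $D^{n+1}(i)=\bigcup_{j\in D^n(i)}D^1(j)$ (equivalently Corollary \ref{new}). Splitting the union into the single term $j=i$ and the terms $j\in D^n(i)\setminus\{i\}\subseteq\Lambda_M$, the former contributes $D^1(i)$, whose non-$i$ part lies in $\Lambda_M$ by the base case, while the latter contribute $D^1(j)\subseteq\Lambda_M$ by the closure property; hence $D^{n+1}(i)\setminus\{i\}\subseteq\Lambda_M$. The main subtlety — and the reason the statement excises $i$ rather than asserting the stronger $D(i)\subseteq\Lambda_M$ — is the self-loop: since $\omega_{ii}\neq 0$ for $i\notin\Lambda_M$ by Theorem \ref{cuadrado}$\mathrm{(iv)}$, the index $i$ reappears as its own descendent in every generation, so one must carry the excision of $i$ carefully through the induction rather than expect $\Lambda_M$ to absorb all of $D(i)$.
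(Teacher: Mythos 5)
Your proof is correct and follows essentially the same route as the paper's: reduce to the case that $M$ is proper, establish the base case $D^{1}(i)\backslash \{i\}\subseteq \Lambda _{M}$ by reading off coefficients of $e_{i}-e_{i}u\in M$, and then induct over generations using that $M$ is an ideal. The only cosmetic difference is in the inductive step, where you read supports directly from Theorem~\ref{cuadrado}$\mathrm{(ii)}$ (your ``closure property'' of $\Lambda _{M}$ under descendents), whereas the paper reaches the same conclusion via the identity $e_{k}e_{j}^{2}=\omega _{kj}e_{k}^{2}$ together with Theorem~\ref{cuadrado}$\mathrm{(i)}$.
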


\begin{proof}
If $M=A$ then $\Lambda _{M}=\Lambda $ and there is nothing to prove.
Otherwise, by Theorem \ref{cuadrado}, we have that $\omega _{ii}\neq 0,$ for
every $i\in \Lambda \backslash \Lambda _{M},$ and $u_{0}=\sum\limits_{i\in
\Lambda \backslash \Lambda _{M}}\frac{1}{\omega _{ii}}e_{i}$ is a modular
unit for $M$. Let $i\in \Lambda \backslash \Lambda _{M}$. For every $j\in
D^{1}(i)\backslash \{i\}=\Lambda _{e_{i}^{2}}\backslash \{i\}$ we have $\pi
_{j}(e_{i}-e_{i}u_{0})=\omega _{ji}\neq 0,$ with $\ e_{i}-e_{i}u_{0}\in M,$
so that $j\in \Lambda _{M}.\,\ $Therefore $D^{1}(i)\backslash \{i\}\subseteq
\Lambda _{M}$. On the other hand, 
\begin{equation*}
D^{2}(i)\backslash \{i\}=\dbigcup\limits_{j\in D^{1}(i)}D^{1}(j)\backslash
\{i\},
\end{equation*}%
and if $k\in D^{1}(j)\backslash \{i\}$ for some $j\in D^{1}(i)$ then, $%
\omega _{kj}\neq 0$ and $e_{k}e_{j}^{2}=\omega _{kj}e_{k}^{2}$. If $j=i$
then $k\in D^{1}(i)\backslash \{i\}\subseteq \Lambda _{M}.$ Otherwise, $j\in
D^{1}(i)\backslash \{i\}\subseteq \Lambda _{M}$ and then $e_{k}^{2}=\frac{1}{%
\omega _{kj}}e_{k}e_{j}^{2}\in M$ because $e_{j}^{2}\in M.$ Thus $e_{k}\in
M, $ by Theorem \ref{cuadrado}$\mathrm{,}$ and hence $k\in \Lambda _{M}$.
This proves that $D^{2}(i)\backslash \{i\}\subseteq \Lambda _{M}$ and, by an
inductive process, we obtain \ that $D^{n}(i)\backslash \{i\}\subseteq
\Lambda _{M}$ for every $n\in \mathbb{N},$ or equivalently $D(i)\backslash
\{i\}\subseteq \Lambda _{M}$.
\end{proof}

Next we characterize the modular ideals of an evolution algebra by means of
the descendents set of a set of \ indexes.

\begin{corollary}
\label{coro}Let $A$ be an evolution algebra and $B:=\{e_{i}:i\in \Lambda \}$
a natural basis. A non-zero proper subset $M\subseteq A$ is a modular ideal
of $A$ if, and only if, $M=lin\{e_{j}:j\in \Lambda _{0}\}$ for some
non-empty proper subset $\Lambda _{0}\subseteq \Lambda $ satisfying the
following conditions:

$\mathrm{(i)}$\ $\Lambda \backslash \Lambda _{0}$ is finite and $\Lambda
\backslash \Lambda _{0}\subseteq \Lambda \backslash D(\Lambda _{0}),$

$\mathrm{(ii)}$\ $i\in D(i)$ and $D(i)\backslash \{i\}\subseteq \Lambda
_{0}, $ for every $i\in \Lambda \backslash \Lambda _{0};$

in which case $u=\sum\limits_{i\in \Lambda \backslash \Lambda _{0}}\frac{1}{%
\omega _{ii}}e_{i}$ is a modular unit for $M$.
\end{corollary}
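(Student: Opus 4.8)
The plan is to prove the two implications separately, taking the support $\Lambda _{M}$ of $M$ as the natural candidate for $\Lambda _{0}$ in the forward direction. A preliminary remark I would make is that the inclusion $\Lambda \backslash \Lambda _{0}\subseteq \Lambda \backslash D(\Lambda _{0})$ in $\mathrm{(i)}$ is merely a complementary reformulation of the closure condition $D(\Lambda _{0})\subseteq \Lambda _{0}$, and I would work with the latter form throughout.

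For necessity, suppose $M$ is a non-zero proper modular ideal and set $\Lambda _{0}:=\Lambda _{M}$. Theorem \ref{cuadrado}$\mathrm{(ii)}$ gives $M=lin\{e_{j}:j\in \Lambda _{0}\}$, with $\Lambda _{0}$ non-empty (as $M\neq \{0\}$) and proper (as $M$ is proper). Condition $\mathrm{(i)}$ then splits into two pieces: finiteness of $\Lambda \backslash \Lambda _{0}$ is exactly Theorem \ref{cuadrado}$\mathrm{(iii)}$, while $D(\Lambda _{0})\subseteq \Lambda _{0}$ follows because for $j\in \Lambda _{0}$ we have $e_{j}\in M$, hence $e_{j}^{2}\in M$, so $D^{1}(j)=\Lambda _{e_{j}^{2}}\subseteq \Lambda _{M}=\Lambda _{0}$, and an induction propagates this to all descendent generations. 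For $\mathrm{(ii)}$, the inclusion $D(i)\backslash \{i\}\subseteq \Lambda _{0}$ for $i\in \Lambda \backslash \Lambda _{0}$ is precisely the preceding Proposition, and $i\in D(i)$ is immediate from Theorem \ref{cuadrado}$\mathrm{(iv)}$, which supplies $\omega _{ii}\neq 0$ and hence $i\in D^{1}(i)$. The claimed modular unit is the $u_{0}$ already furnished by Theorem \ref{cuadrado}$\mathrm{(iv)}$.

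For sufficiency, given $\Lambda _{0}$ satisfying $\mathrm{(i)}$ and $\mathrm{(ii)}$, I would first check that $M=lin\{e_{j}:j\in \Lambda _{0}\}$ is an ideal: since $e_{j}^{2}=\sum_{k\in D^{1}(j)}\omega _{kj}e_{k}$ and $D^{1}(j)\subseteq D(\Lambda _{0})\subseteq \Lambda _{0}$, each $e_{j}^{2}\in M$, and as $e_{i}e_{j}=0$ for $i\neq j$, bilinearity gives $AM\subseteq M$. The key step is to show $\omega _{ii}\neq 0$ for every $i\in \Lambda \backslash \Lambda _{0}$, so that $u=\sum_{i\in \Lambda \backslash \Lambda _{0}}\frac{1}{\omega _{ii}}e_{i}$ is even well-defined. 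Here I would argue by contradiction: if $\omega _{ii}=0$ then $i\notin D^{1}(i)$, so from $i\in D(i)$ there is a minimal $n\geq 2$ with $i\in D^{n}(i)$; Corollary \ref{new} produces $j\in D^{n-1}(i)$ with $\omega _{ij}\neq 0$; minimality of $n$ forbids $j=i$, so $j\in D(i)\backslash \{i\}\subseteq \Lambda _{0}$ and then $i\in D^{1}(j)\subseteq D(\Lambda _{0})\subseteq \Lambda _{0}$, contradicting $i\in \Lambda \backslash \Lambda _{0}$.

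Finally, with $\omega _{ii}\neq 0$ in hand, I would verify that $u$ is a modular unit by testing $e_{k}-e_{k}u\in M$ on basis elements. For $k\in \Lambda _{0}$ one has $e_{k}u=0$ and $e_{k}-e_{k}u=e_{k}\in M$; for $k\in \Lambda \backslash \Lambda _{0}$ only the $i=k$ term survives, giving $e_{k}u=\frac{1}{\omega _{kk}}e_{k}^{2}=e_{k}+\frac{1}{\omega _{kk}}\sum_{j\in D^{1}(k)\backslash \{k\}}\omega _{jk}e_{j}$, so $e_{k}-e_{k}u$ lies in $lin\{e_{j}:j\in D^{1}(k)\backslash \{k\}\}\subseteq M$ by $\mathrm{(ii)}$. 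I expect the well-definedness step — deducing $\omega _{ii}\neq 0$ from the interplay of $\mathrm{(i)}$ and $\mathrm{(ii)}$ through the descendent recursion of Corollary \ref{new} — to be the main obstacle, and I would stress that both conditions are genuinely needed, since a cycle avoiding the diagonal shows neither one alone forces $\omega _{ii}\neq 0$.
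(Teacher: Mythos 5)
Your proof is correct and follows essentially the same route as the paper's: necessity by taking $\Lambda _{0}=\Lambda _{M}$ and invoking Theorem \ref{cuadrado} together with the Proposition preceding the corollary, and sufficiency by checking the ideal property, constructing $u$, and testing $e_{k}-e_{k}u$ on basis elements. The one point where you are more careful than the paper is the well-definedness step: the paper justifies $\omega _{ii}\neq 0$ for $i\in \Lambda \backslash \Lambda _{0}$ with the bare parenthetical ``as $i\in D(i)$'', which taken literally is insufficient (since $i\in D(i)$ can hold through a cycle avoiding the diagonal), whereas your contradiction argument via Corollary \ref{new} and the interplay of $\mathrm{(i)}$ and $\mathrm{(ii)}$ supplies exactly the missing justification.
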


\begin{proof}
Let $\Lambda _{0}\subseteq \Lambda $ satisfying $\mathrm{(i)},$ and $\mathrm{%
(ii)}$. Then, the condition $\mathrm{(i)}$ assures that $M:=lin\{e_{i}:i\in
\Lambda _{0}\}$ is an ideal as $D(\Lambda _{0})\subseteq \Lambda _{0},$
which is proper as $\Lambda \backslash \Lambda _{0}$ is finite. For every $%
i\in \Lambda \backslash \Lambda _{0},$ by condition $\mathrm{(ii)}$ we have
that $\omega _{ii}\neq 0$ (as $i\in D(i)$) so that $u=\sum\limits_{i\in
\Lambda \backslash \Lambda _{0}}\frac{1}{\omega _{ii}}e_{i}$ is well
defined. Since $D(i)\backslash \{i\}\subseteq \Lambda _{0},$ we have \ that 
\begin{equation*}
e_{i}-e_{i}u\in lin\{e_{j}:j\in D(i)\backslash \{i\}\}\subseteq
lin\{e_{j}:j\in \Lambda _{0}\}=M,
\end{equation*}%
for every $i\in \Lambda ,$ which implies that $M$ is a (proper)\ modular
ideal and $u=\sum\limits_{i\in \Lambda _{0}}\frac{1}{\omega _{ii}}e_{i}$ a
modular unit for $M.$ Conversely, by Theorem \ref{cuadrado} and the above
proposition, we conclude that every proper\ modular ideal of $A$ is of this
type (take $\Lambda _{0}=\Lambda _{M}$).
\end{proof}

From the above result we characterize the maximal modular ideals of an
evolution algebra, and therefore its Jacobson radical.

\begin{corollary}
\label{coroa}Let $A$ be an evolution algebra, $B:=\{e_{i}:i\in \Lambda \}$ a
natural basis, and $M_{B}(A)=(\omega _{ij})$ the structure matrix. Then $M$
is a maximal modular ideal of $A$ if, and only if, $M=lin\{e_{i}:i\in
\Lambda \backslash \{i_{0}\}\}$ for some $i_{0}\in \Lambda $ satisfying that 
$\omega _{i_{0}i_{0}}\neq 0$, $\ $and that $i_{0}\notin D(k)$ $\ $for every $%
k\in \Lambda \backslash \{i_{0}\},$ in which case $\frac{1}{\omega
_{i_{0}i_{0}}}e_{i_{0}}$ is a modular unit for $M.$ Consequently every
maximal modular ideal of $A$ has codimension one.
\end{corollary}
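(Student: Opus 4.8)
The plan is to characterize maximal modular ideals by combining the general description of modular ideals from Corollary \ref{coro} with a maximality analysis. By Corollary \ref{coro}, any proper modular ideal has the form $M=lin\{e_j:j\in\Lambda_0\}$ where $\Lambda\backslash\Lambda_0$ is finite and each $i\in\Lambda\backslash\Lambda_0$ satisfies $i\in D(i)$ and $D(i)\backslash\{i\}\subseteq\Lambda_0$. The codimension of such an $M$ is exactly $\mathrm{card}(\Lambda\backslash\Lambda_0)$, which is finite. So maximality among proper modular ideals should correspond to making this codimension as small as possible, namely equal to one, i.e. $\Lambda\backslash\Lambda_0=\{i_0\}$ for a single index $i_0$.

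First I would establish the forward direction: suppose $M$ is maximal modular. Since $M$ is proper and modular, it has the Corollary \ref{coro} form with $\Lambda_0=\Lambda_M$ and $\Lambda\backslash\Lambda_0$ finite. I would argue that if $\mathrm{card}(\Lambda\backslash\Lambda_0)\geq 2$, then $M$ cannot be maximal: picking any $i_0\in\Lambda\backslash\Lambda_0$, I would show that $\Lambda_1:=\Lambda_0\cup\{i_0\}$ still satisfies conditions $\mathrm{(i)}$ and $\mathrm{(ii)}$ of Corollary \ref{coro}, so that $M_1:=lin\{e_j:j\in\Lambda_1\}$ is a strictly larger proper modular ideal, contradicting maximality. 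The delicate point here is verifying condition $\mathrm{(ii)}$ for the enlarged set: for the remaining indices $i\in\Lambda\backslash\Lambda_1$ I need $D(i)\backslash\{i\}\subseteq\Lambda_1$, which is immediate since $\Lambda_0\subseteq\Lambda_1$. Thus $\Lambda\backslash\Lambda_0$ must be a single index $i_0$, and $M=lin\{e_i:i\in\Lambda\backslash\{i_0\}\}$.

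Next, with $\Lambda_0=\Lambda\backslash\{i_0\}$, conditions $\mathrm{(i)}$ and $\mathrm{(ii)}$ translate into the stated criterion. Condition $\mathrm{(ii)}$ says $i_0\in D(i_0)$ (equivalently $\omega_{i_0i_0}\neq 0$, since $i_0\in D(i_0)$ forces a nonzero diagonal structure constant via the descendent characterization, and conversely) and $D(i_0)\backslash\{i_0\}\subseteq\Lambda\backslash\{i_0\}$, which holds automatically. The condition $i_0\notin D(k)$ for every $k\neq i_0$ is exactly the requirement $\Lambda\backslash\Lambda_0\subseteq\Lambda\backslash D(\Lambda_0)$ from $\mathrm{(i)}$ rewritten for $\Lambda_0=\Lambda\backslash\{i_0\}$: this says no index in $\Lambda_0$ has $i_0$ as a descendent, which is precisely $i_0\notin D(\Lambda_0)=\bigcup_{k\neq i_0}D(k)$. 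The modular unit formula specializes the Corollary \ref{coro} unit to the single summand $\frac{1}{\omega_{i_0i_0}}e_{i_0}$.

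For the converse, given $i_0$ with $\omega_{i_0i_0}\neq 0$ and $i_0\notin D(k)$ for all $k\neq i_0$, I would verify directly that $\Lambda_0=\Lambda\backslash\{i_0\}$ satisfies $\mathrm{(i)}$ and $\mathrm{(ii)}$ so that $M$ is a proper modular ideal of codimension one, hence automatically maximal (any strictly larger ideal would be all of $A$). The main obstacle I anticipate is cleanly translating condition $\mathrm{(i)}$'s inclusion $\Lambda\backslash\Lambda_0\subseteq\Lambda\backslash D(\Lambda_0)$ into the single statement "$i_0\notin D(k)$ for every $k\in\Lambda\backslash\{i_0\}$" and making sure the equivalence between $i_0\in D(i_0)$ and $\omega_{i_0i_0}\neq 0$ is handled rigorously using the descendent characterization; the rest is essentially a specialization of Corollary \ref{coro} together with the elementary observation that a codimension-one ideal is maximal.
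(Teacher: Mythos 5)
Your overall strategy coincides with the paper's: reduce to Corollary \ref{coro}, show that maximality forces $\Lambda\backslash\Lambda_{M}$ to be a singleton by adjoining one index to $\Lambda_{M}$, and then translate conditions $\mathrm{(i)}$--$\mathrm{(ii)}$ for $\Lambda_{0}=\Lambda\backslash\{i_{0}\}$ into the stated criterion. However, in the enlargement step you have mislocated the work, and the step that actually needs an argument is left unverified. You call condition $\mathrm{(ii)}$ for $\Lambda_{1}=\Lambda_{0}\cup\{i_{0}\}$ the ``delicate point'' and correctly dismiss it as immediate, but you never check condition $\mathrm{(i)}$, namely $\Lambda\backslash\Lambda_{1}\subseteq\Lambda\backslash D(\Lambda_{1})$, equivalently $D(\Lambda_{1})\subseteq\Lambda_{1}$. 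This is precisely where the paper spends its displayed computation: one needs
\begin{equation*}
D(\Lambda_{0}\cup\{i_{0}\})=D(\Lambda_{0})\cup D(i_{0})\subseteq \Lambda_{0}\cup \bigl(\Lambda_{0}\cup\{i_{0}\}\bigr)=\Lambda_{1},
\end{equation*}
which uses condition $\mathrm{(i)}$ for $\Lambda_{0}$ (giving $D(\Lambda_{0})\subseteq\Lambda_{0}$) together with condition $\mathrm{(ii)}$ for the chosen index $i_{0}$ (giving $D(i_{0})\subseteq\Lambda_{0}\cup\{i_{0}\}$). Without this, Corollary \ref{coro} cannot be invoked to conclude that $M_{1}$ is a proper modular ideal, so the contradiction with maximality is not yet established. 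The repair is the one line above, so the gap is small but real.

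A second imprecision: your parenthetical claim that $i_{0}\in D(i_{0})$ is equivalent to $\omega_{i_{0}i_{0}}\neq 0$ ``via the descendent characterization'' is false as stated. In the evolution algebra with $e_{1}^{2}=e_{2}$ and $e_{2}^{2}=e_{1}$ (which appears in the paper), one has $1\in D^{2}(1)\subseteq D(1)$ while $\omega_{11}=0$. The implication $i_{0}\in D(i_{0})\Rightarrow\omega_{i_{0}i_{0}}\neq 0$ holds only in conjunction with condition $\mathrm{(i)}$: if $i_{0}\in D^{n}(i_{0})$ through a chain containing some $k\neq i_{0}$, then $k\in D(i_{0})\backslash\{i_{0}\}\subseteq\Lambda_{0}$ and $i_{0}\in D(k)\subseteq D(\Lambda_{0})$, contradicting $\mathrm{(i)}$; hence the chain must be the trivial loop and $\omega_{i_{0}i_{0}}\neq 0$ (the converse, $\omega_{i_{0}i_{0}}\neq 0\Rightarrow i_{0}\in D^{1}(i_{0})$, is trivial). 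You do flag this equivalence as something to ``handle rigorously,'' which is the right instinct, but the handling requires combining it with condition $\mathrm{(i)}$, not the descendent characterization alone. With these two repairs your argument becomes essentially identical to the paper's proof.
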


\begin{proof}
Suppose that $i_{0}$ is such that $\omega _{i_{0}i_{0}}\neq 0$ and $%
i_{0}\notin D(k)$ for every $k\neq i_{0}.$ Then $M=lin\{e_{i}:i\neq i_{0}\}$
is a maximal modular ideal as it can be deduced from Corollary \ref{coro}.
Indeed, for $\Lambda _{0}=\Lambda \backslash \{i_{0}\}$ we have that

$\mathrm{(i)}$ $\Lambda \backslash \Lambda _{0}=\{i_{0}\}~$\ is finite, and $%
\{i_{0}\}=\Lambda \backslash \Lambda _{0}\subseteq \Lambda \backslash
D(\Lambda _{0})$ because $i_{0}\notin D(k)$ for $k\neq i_{0},$ so that $%
i_{0}\notin D(\Lambda _{0})=D(\Lambda \backslash \{i_{0}\}).$

$\mathrm{(ii)}$ $\ i\in D(i)$ (that is $\omega _{i_{0}i_{0}}\neq 0$) and,
obviously, $D(i_{0})\backslash \{i_{0}\}\subseteq \Lambda _{0}=\Lambda
\backslash \{i_{0}\}$.

Therefore by the above corollary we obtain that $M$ is a modular ideal of
codimension one, and hence maximal.

Conversely, let $M$ be a maximal modular ideal. By Corollary \ref{coro} we
have that $M=lin\{e_{k}:k\in \Lambda _{M}\}$ with

$\mathrm{(i)}$ $\Lambda \backslash \Lambda _{M}$ \ finite, and $\Lambda
\backslash \Lambda _{M}\subseteq \Lambda \backslash D(\Lambda _{M})$,

$\mathrm{(ii)}$ $\omega _{ii}\neq 0$ and $D(i)\backslash \{i\}\subseteq
\Lambda _{M}$ for every $i\in \Lambda \backslash \Lambda _{M}.$

Suppose that $i,j\in \Lambda \backslash \Lambda _{M}$ with $i\neq j.$ Then $%
i\notin D(j)$ and $j\notin D(i)$ by the hypothesis $\mathrm{(ii).}$ Indeed,
if $j\in \Lambda \backslash \Lambda _{M}$ and $D(i)\backslash \{i\}\subseteq
\Lambda _{M}$ then $j\notin D(i)\backslash \{i\}$ and hence $j\notin D(i).$
Similarly $i\notin D(j)$. Note that $\Lambda _{0}:=\Lambda _{M}\cup \{i\}$
is a proper subset of $\Lambda $ such that \ 

$\mathrm{(i)}$ $\Lambda \backslash \Lambda _{0}$ is finite and $\Lambda
\backslash \Lambda _{0}\subseteq \Lambda \backslash D(\Lambda _{0})$ since
from the inclusion $D(i)\backslash \{i\}\subseteq \Lambda _{M}$ (by keeping
in mind that $D(\Lambda _{M})\subseteq \Lambda _{M}$) it follows that%
\begin{equation*}
D(\Lambda _{0})=D(\Lambda _{M}\cup \{i\})\subseteq \Lambda _{M}\cup D(i)\cup
\{i\}\subseteq \Lambda _{M}\cup \{i\}=\Lambda _{0},
\end{equation*}

$\mathrm{(ii)}$ $\omega _{kk}\neq 0$, for every $k\in \Lambda \backslash
\Lambda _{0}$ (because $\omega _{kk}\neq 0$ for $k\in \Lambda \backslash
\Lambda _{M}$ and $\Lambda \backslash \Lambda _{0}\subseteq $ $\Lambda
\backslash \Lambda _{M}$). Also $D(k)\backslash \{k\}\subseteq \Lambda _{0}$
for every $k\in \Lambda \backslash \Lambda _{0},$ (because $D(k)\backslash
\{k\}\subseteq \Lambda _{M}\subseteq \Lambda _{0}$). Consequently, by
Corollary \ref{coro}, we have that $lin\{e_{k}:k\in \Lambda _{M}\cup \{i\}\}$
is a proper modular ideal containing $M.$ This contradiction proves that $%
\Lambda \backslash \Lambda _{M}=\{i_{0}\}$ for some $i_{0}\in \Lambda $ such
that $\omega _{i_{0}i_{0}}\neq 0$ \ and $D(i_{0})\backslash
\{i_{0}\}\subseteq \Lambda _{M}$ with $D(\Lambda _{M})\subseteq \Lambda _{M}$
(that is $D(\Lambda \backslash \{i_{0}\})\subseteq \Lambda \backslash
\{i_{0}\}$). Therefore $\ i_{0}\notin D(k)$ for every $k\in \Lambda
\backslash \{i_{0}\},$ as desired$.$ It also follows that $\frac{1}{\omega
_{i_{0}i_{0}}}e_{i_{0}}$ is a modular unit for $M$. Thus, every maximal
modular ideal of $A$ has codimension one.
\end{proof}

The above result is very helpful in determining the maximal modular ideals
of an evolution algebra and hence its Jacobson radical, as the following
example shows.

\begin{example}
\label{ejemrado}\emph{Let }$A$\emph{\ be the evolution algebra with natural
basis }%
\begin{equation*}
B:=\{e_{1},e_{2},e_{3},e_{4},e_{5},e_{6},e_{7}\}
\end{equation*}%
\emph{and structure matrix given by}%
\begin{equation*}
\left( 
\begin{array}{ccccccc}
\omega _{11} & \omega _{12} & \omega _{13} & 0 & 0 & 0 & \omega _{17} \\ 
\omega _{21} & \omega _{22} & \omega _{23} & 0 & 0 & 0 & \omega _{27} \\ 
\omega _{31} & \omega _{32} & \omega _{33} & 0 & 0 & 0 & \omega _{37} \\ 
0 & 0 & 0 & \omega _{44} & 0 & \omega _{46} & 0 \\ 
0 & 0 & 0 & 0 & \omega _{55} & \omega _{56} & 0 \\ 
0 & 0 & 0 & 0 & 0 & \omega _{66} & 0 \\ 
0 & 0 & 0 & 0 & 0 & 0 & \omega _{77}%
\end{array}%
\right) 
\end{equation*}%
\emph{(where }$\omega _{ij}\neq 0$\emph{).~Then, according to Corollary \ref%
{coroa}, the maximal modular ideals of }$A$\emph{\ are the following ones }%
\begin{eqnarray*}
M_{6} &=&lin\{e_{1},e_{2},e_{3},e_{4},e_{5},e_{7}\}, \\
M_{7} &=&lin\{e_{1},e_{2},e_{3},e_{4},e_{5},e_{6}\}\emph{.}
\end{eqnarray*}%
\emph{\ Indeed if }$M_{i}=lin\{e_{j}:j\in \{1,2,3,4,5,6,7\}\backslash
\{i\}\}\,\ $\emph{then, }$M_{1},M_{2},M_{3},M_{4}$\emph{, and }$M_{5}$\emph{%
\ are not even modular ideals. Consequently, }%
\begin{equation*}
Rad(A)=M_{6}\cap M_{7}=lin\{e_{1},e_{2},e_{3},e_{4},e_{5}\}.
\end{equation*}
\end{example}

To characterize easily the Jacobson radical of an evolution algebra we
introduce the following notion.

\begin{definition}
\label{modularindex}\emph{Let }$A$ \emph{be a natural evolution algebra, }$%
B:=\{e_{i}:i\in \Lambda \}$\emph{\ a natural basis of }$A,$\emph{\ and }$%
M_{B}(A)=(\omega _{ij})_{\Lambda \times \Lambda }$\emph{\ the corresponding
structure matrix. We say that }$i_{0}\in \Lambda $\emph{\ is a \textbf{%
modular index} if }$\omega _{i_{0}i_{0}}\neq 0$\emph{\ and }$\omega
_{ji_{0}}=0$\emph{\ if }$i_{0}\neq j.$
\end{definition}

That $i_{0}\in \Lambda $ is a modular index means that the corresponding $%
i_{0}-$file in the structure matrix $M_{B}(A)=(\omega _{ij})_{\Lambda \times
\Lambda }$ consists of zeros except for the entry $\omega _{i_{0}i_{0}}\neq
0.$

In the next result we characterize the modular indexes in terms of the sets
of descendents.

\begin{lemma}
Let $A$ be an evolution algebra, $B:=\{e_{i}:i\in \Lambda \}$ a natural
basis, and $M_{B}(A)=(\omega _{ij})$ the structure matrix. Let $i_{0}\in
\Lambda .$ Then, the following assertions are equivalent:

$\mathrm{(i)}$ $i_{0}$ is a modular index,

$\mathrm{(ii)}$ $\omega _{i_{0}i_{0}}\neq 0$ and $i_{0}\notin D^{1}(j)$, for
every $j\in \Lambda \backslash \{i_{0}\},$

$\mathrm{(iii)}$\ $\omega _{i_{0}i_{0}}\neq 0\ $and $i_{0}\notin D(j),$ $\ $%
for every $j\in \Lambda \backslash \{i_{0}\}.$
\end{lemma}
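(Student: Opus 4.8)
The plan is to prove the three equivalences by treating $\mathrm{(i)}\Leftrightarrow\mathrm{(ii)}$ as a direct reformulation of the definitions and reserving the real work for $\mathrm{(ii)}\Leftrightarrow\mathrm{(iii)}$.

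First I would record the dictionary between the structure matrix and the first-generation descendents supplied by Definition~\ref{descen}: for any $j\in\Lambda$ one has $i_0\in D^1(j)$ if and only if $\omega_{i_0 j}\neq 0$. Consequently the clause ``$i_0\notin D^1(j)$ for every $j\in\Lambda\setminus\{i_0\}$'' appearing in $\mathrm{(ii)}$ is nothing but ``$\omega_{i_0 j}=0$ whenever $j\neq i_0$'', which, combined with $\omega_{i_0 i_0}\neq 0$, is exactly the defining condition for $i_0$ to be a modular index. This settles $\mathrm{(i)}\Leftrightarrow\mathrm{(ii)}$ without computation.

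The implication $\mathrm{(iii)}\Rightarrow\mathrm{(ii)}$ is immediate because $D^1(j)\subseteq D(j)$, so $i_0\notin D(j)$ forces $i_0\notin D^1(j)$. The content of the lemma lies in $\mathrm{(ii)}\Rightarrow\mathrm{(iii)}$, and here I would argue by induction on $n$ that, under $\mathrm{(ii)}$, one has $i_0\notin D^n(j)$ for every $n\geq 1$ and every $j\in\Lambda\setminus\{i_0\}$; since $D(j)=\bigcup_{n}D^n(j)$, this yields $i_0\notin D(j)$ as required. The base case $n=1$ is precisely hypothesis $\mathrm{(ii)}$. For the inductive step I would invoke the recursion $D^{n+1}(j)=\bigcup_{k\in D^n(j)}D^1(k)$ from Definition~\ref{descen}: if $i_0\in D^{n+1}(j)$ then $i_0\in D^1(k)$ for some $k\in D^n(j)$, hence $\omega_{i_0 k}\neq 0$; but $\mathrm{(ii)}$ permits $\omega_{i_0 k}\neq 0$ only when $k=i_0$, so $i_0\in D^n(j)$, contradicting the inductive hypothesis since $j\neq i_0$.

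The single point that needs attention — the ``hard part'', modest as it is — is to peel off the \emph{last} generation rather than the first in the inductive step, so that the terminal arrow lands on $i_0$ and is therefore forced by $\mathrm{(ii)}$ to emanate from $i_0$ itself. This encodes the intuition that if $i_0$ is an immediate descendent of no index other than itself, then no chain of descendents starting from a distinct index can ever reach $i_0$. The same step can alternatively be phrased through Corollary~\ref{new} with $n=1$, which isolates the last generation directly.
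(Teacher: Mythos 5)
Your proof is correct and follows essentially the same route as the paper's: $\mathrm{(i)}\Leftrightarrow\mathrm{(ii)}$ is treated as definitional, $\mathrm{(iii)}\Rightarrow\mathrm{(ii)}$ follows from $D^{1}(j)\subseteq D(j)$, and $\mathrm{(ii)}\Rightarrow\mathrm{(iii)}$ is the same induction on generations, peeling off the last generation via $D^{n+1}(j)=\bigcup_{k\in D^{n}(j)}D^{1}(k)$ and using $\mathrm{(ii)}$ to force $k=i_{0}$, contradicting the inductive hypothesis. One incidental merit of your explicit unwinding of $\mathrm{(i)}\Leftrightarrow\mathrm{(ii)}$: you read the modular-index condition as the vanishing of the $i_{0}$-th row off the diagonal ($\omega_{i_{0}j}=0$ for $j\neq i_{0}$), which is the intended reading (consistent with the introduction's description of $\Lambda_{m}$ and with the remark following Definition \ref{modularindex}), even though the indices displayed in that definition are transposed.
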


\begin{proof}
$\mathrm{(i)\Leftrightarrow (ii)}$ is nothing but the definition of a
modular index. To prove $\mathrm{(ii)\Rightarrow (ii)}$ suppose that $%
i_{0}\notin D^{1}(j)$, for every $j\in \Lambda \backslash \{i_{0}\}.$ Then
it follows that $i_{0}\notin D^{2}(j)$ for every $j\in \Lambda \backslash
\{i_{0}\},$ because $D^{2}(j)=\dbigcup\limits_{k\in D^{1}(j)}D^{1}(k),$ and
if $k\in D^{1}(j)$ for $j\in \Lambda \backslash \{i_{0}\}$ and $i_{0}\in
D^{1}(k)$ then, by $\mathrm{(ii),}$ we have that $k=i_{0}$ so that $i_{0}\in
D^{1}(j)$ for $j\in \Lambda \backslash \{i_{0}\},$ a contradiction. Thus $%
i_{0}\notin D^{2}(j)$ for every $j\in \Lambda \backslash \{i_{0}\}.$ Assume
that $i_{0}\notin D^{n}(j)$ for every $j\in \Lambda \backslash \{i_{0}\}.$
Since $D^{n+1}(j)=\dbigcup\limits_{k\in D^{n}(j)}D^{1}(k),$ we have that $%
i_{0}\notin D^{n+1}(j)$ for each $j\in \Lambda \backslash \{i_{0}\},$
because otherwise $i_{0}\in D^{1}(k)$ for $k\in D^{n}(j),$ and by $\mathrm{%
(ii)}$ we have that $k=i_{0}$ so that $i_{0}\in D^{n}(j)$ $\ $for $j\in
\Lambda \backslash \{i_{0}\},$ which \ contradicts the inductive hypothesis.
It follows that $i_{0}\notin D(j)$ $\ $for every $j\in \Lambda \backslash
\{i_{0}\}$ which proves $\mathrm{(iii)}$. Finally $\mathrm{(iii)\Rightarrow
(ii)}$ is obvious because $D^{1}(j)\subseteq D(j),$ for every $j\in \Lambda
. $
\end{proof}

Maximal modular ideals can be characterized in terms of a modular index we
do next.

\begin{corollary}
\label{moduin}Let $A$ be an evolution algebra and $B:=\{e_{i}:i\in \Lambda \}
$ a natural basis. Then $M$ is a maximal modular ideal of $A$ if, and only
if, $M=lin\{e_{i}:i\in \Lambda \backslash \{i_{0}\}\}$ for some modular
index $i_{0}\in \Lambda ,$ in which case $u_{0}:=\frac{1}{\omega
_{i_{0}i_{0}}}e_{i_{0}}$ is a modular unit for $M.$
\end{corollary}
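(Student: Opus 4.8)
The plan is to deduce Corollary \ref{moduin} directly from the already-established Corollary \ref{coroa} together with the equivalence just proved in the preceding Lemma. Recall that Corollary \ref{coroa} states that $M$ is a maximal modular ideal if and only if $M=lin\{e_i:i\in\Lambda\backslash\{i_0\}\}$ for some $i_0\in\Lambda$ satisfying the two conditions $\omega_{i_0i_0}\neq 0$ and $i_0\notin D(k)$ for every $k\in\Lambda\backslash\{i_0\}$, in which case $\frac{1}{\omega_{i_0i_0}}e_{i_0}$ is a modular unit for $M$. The preceding Lemma shows (in the equivalence $\mathrm{(i)}\Leftrightarrow\mathrm{(iii)}$) that these two conditions are exactly the definition of $i_0$ being a modular index.

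First I would observe that the form $M=lin\{e_i:i\in\Lambda\backslash\{i_0\}\}$ and the modular unit $u_0=\frac{1}{\omega_{i_0i_0}}e_{i_0}$ are carried over verbatim from Corollary \ref{coroa}, so no new work is required for those parts. The only thing to verify is that the pair of conditions appearing in Corollary \ref{coroa}, namely
\begin{equation*}
\omega_{i_0i_0}\neq 0\quad\text{and}\quad i_0\notin D(k)\text{ for every }k\in\Lambda\backslash\{i_0\},
\end{equation*}
is logically equivalent to the single statement ``$i_0$ is a modular index.'' But this is precisely the content of assertion $\mathrm{(iii)}$ in the Lemma, which the Lemma shows to be equivalent to assertion $\mathrm{(i)}$, namely that $i_0$ is a modular index. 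Hence the two formulations describe the same set of indices $i_0$.

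Putting these together, the forward implication reads: if $M$ is a maximal modular ideal, then by Corollary \ref{coroa} it has the stated form for some $i_0$ satisfying the two conditions, and by the Lemma that $i_0$ is a modular index. The converse reads: if $i_0$ is a modular index, then by the Lemma it satisfies the two conditions of Corollary \ref{coroa}, so $lin\{e_i:i\in\Lambda\backslash\{i_0\}\}$ is a maximal modular ideal with the prescribed modular unit. I do not anticipate any genuine obstacle here; the statement is essentially a reformulation of Corollary \ref{coroa} obtained by substituting the Lemma's characterization of a modular index, and the proof is a one-line citation of those two results. The only point requiring minor care is to confirm that the quantifier ``for every $k\in\Lambda\backslash\{i_0\}$'' in Corollary \ref{coroa} matches ``for every $j\in\Lambda\backslash\{i_0\}$'' in the Lemma, which it does up to renaming the dummy index.
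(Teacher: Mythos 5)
Your proposal is correct and follows exactly the paper's own argument: the paper's proof of this corollary is the one-line observation that it "follows directly from Corollary \ref{coroa} and the above Lemma," which is precisely your substitution of the Lemma's equivalence $\mathrm{(i)}\Leftrightarrow\mathrm{(iii)}$ into the characterization of Corollary \ref{coroa}. Your spelled-out verification of both implications and the carried-over modular unit matches the intended reasoning, just stated more explicitly.
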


\begin{proof}
The proof follows directly from Corollary \ref{coroa} \ and the above Lemma.
\end{proof}

Since the Jacobson radical,$\ Rad(A),$ of an evolution algebra $A$ is given
by the intersection of all maximal modular ideals of $A,$ the above result
allows us to characterize it very easily.

\begin{corollary}
\label{Jradical}Let $A$ be a non-zero evolution algebra, and $%
B:=\{e_{i}:i\in \Lambda \}$ a natural basis of $A.$ Let $\Lambda
_{m}:=\{i\in \Lambda :$ $i$ \textit{is }modular$\}.$ Then, $A$ is a radical
algebra if, and only if, $\Lambda _{m}=\emptyset $ and $A$ is semisimple if,
and only if, $\Lambda =\Lambda _{m}$. Moreover, $\ $%
\begin{equation*}
Rad(A)=lin\{e_{i}:i\in \Lambda \backslash \Lambda _{m}\}.
\end{equation*}
\end{corollary}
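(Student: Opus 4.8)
The plan is to derive everything from Corollary \ref{moduin}, which already identifies the maximal modular ideals of $A$ as exactly the sets $lin\{e_{i}:i\in \Lambda \backslash \{i_{0}\}\}$ where $i_{0}$ ranges over the modular indexes, i.e. over $\Lambda _{m}$. Since $Rad(A)$ is by definition the intersection of all maximal modular ideals, the proof reduces to computing this intersection together with handling the degenerate cases $\Lambda _{m}=\emptyset $ and $\Lambda _{m}=\Lambda $.

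First I would dispose of the formula for $Rad(A)$ in the case $\Lambda _{m}\neq \emptyset $. By Corollary \ref{moduin}, the maximal modular ideals are precisely $M_{i_{0}}:=lin\{e_{i}:i\in \Lambda \backslash \{i_{0}\}\}$ for $i_{0}\in \Lambda _{m}$, so
\begin{equation*}
Rad(A)=\dbigcap\limits_{i_{0}\in \Lambda _{m}}lin\{e_{i}:i\in \Lambda \backslash \{i_{0}\}\}.
\end{equation*}
The key elementary observation is that for a family of coordinate subspaces of this kind, the intersection removes from the spanning set every basis vector that is excluded by at least one member; equivalently an element $a=\sum \alpha _{i}e_{i}$ lies in every $M_{i_{0}}$ ($i_{0}\in \Lambda _{m}$) if and only if $\alpha _{i_{0}}=0$ for each $i_{0}\in \Lambda _{m}$, i.e. if and only if $\Lambda _{a}\subseteq \Lambda \backslash \Lambda _{m}$. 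This yields at once
\begin{equation*}
Rad(A)=lin\{e_{i}:i\in \Lambda \backslash \Lambda _{m}\},
\end{equation*}
which is the desired formula.

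Next I would treat the two equivalences. For the radical case, $A$ is radical means $Rad(A)=A$; using the formula just obtained this holds iff $lin\{e_{i}:i\in \Lambda \backslash \Lambda _{m}\}=A=lin\{e_{i}:i\in \Lambda \}$, which for a basis forces $\Lambda \backslash \Lambda _{m}=\Lambda $, i.e. $\Lambda _{m}=\emptyset $. Conversely, if $\Lambda _{m}=\emptyset $ there are no maximal modular ideals, so $Rad(A)$ is the intersection over an empty family, which by convention is all of $A$; this matches the formula since $\Lambda \backslash \Lambda _{m}=\Lambda $. For the semisimple case, $Rad(A)=\{0\}$ holds iff $\Lambda \backslash \Lambda _{m}=\emptyset $, i.e. iff $\Lambda =\Lambda _{m}$, again by linear independence of the basis. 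Both equivalences are therefore immediate consequences of the displayed formula, so I would in fact prove the formula first and read off the two equivalences from it.

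The only point requiring genuine care — and the place I expect to be the main obstacle — is the empty-family convention for $\Lambda _{m}=\emptyset $: one must justify that when $A$ has no maximal modular ideals the radical equals $A$ rather than being undefined, so that the radical case $\Lambda _{m}=\emptyset $ is consistent with the stated formula. I would handle this by invoking the standard convention that the intersection of an empty family of subsets of $A$ is $A$ itself, which is exactly what makes $Rad(A)=A$ in this situation and keeps the single formula $Rad(A)=lin\{e_{i}:i\in \Lambda \backslash \Lambda _{m}\}$ valid in all cases. Everything else is a routine manipulation of coordinate subspaces and needs no further argument.
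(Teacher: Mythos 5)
Your proposal is correct and follows essentially the same route as the paper: both proofs take Corollary \ref{moduin} as the sole input, identify the maximal modular ideals as the coordinate hyperplanes $lin\{e_{i}:i\in \Lambda \backslash \{i_{0}\}\}$ with $i_{0}\in \Lambda _{m}$, and compute their intersection, with the empty-family convention (no maximal modular ideals means $Rad(A)=A$) handling the case $\Lambda _{m}=\emptyset $. The only difference is organizational — you establish the formula first and read off the two equivalences, while the paper argues the cases $\Lambda _{m}=\emptyset $, $\Lambda _{m}=\Lambda $, and $\Lambda _{m}$ proper separately — which is not a substantive departure.
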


\begin{proof}
By Corollary \ref{moduin} the set of maximal modular ideals of $A$ can be
described as $\{M_{i_{0}}:i_{0}\in \Lambda _{m}\}$ where $%
M_{i_{0}}=lin\{e_{i}:i\in \Lambda \backslash \{i_{0}\}\}.$ Therefore $A$ has
no modular ideals if, and only if, $\Lambda _{m}=\emptyset ,$ in whose case $%
A$ is a radical algebra. On the other hand, if $\Lambda _{m}\neq \emptyset $
then, $Rad(A)=\{0\}$ if, and only, if $\Lambda _{m}=\Lambda ,$ because the
unique maximal modular ideal that does not contain $e_{i_{0}}$ is $M_{i_{0}}$
provided that $i_{0}$ is a modular index (this means that $e_{i_{0}}\in
Rad(A)$ whenever $i_{0}\notin \Lambda _{m}$). Finally, if $\Lambda _{m}$ is
a non-void proper subset of $\Lambda $ then,%
\begin{equation*}
Rad(A)=\dbigcap\limits_{i_{0}\in \Lambda _{m}}M_{i_{0}}=lin\{e_{i}:i\in
\Lambda \backslash \Lambda _{m}\}.
\end{equation*}
\end{proof}

Since modular indexes are very easy to detect ($i_{0}$ is modular if the
corresponding $i_{0}-$file in the structure matrix $M_{B}(A)=(\omega
_{ij})_{\Lambda \times \Lambda }$ consists of zeros except for the entry $%
\omega _{i_{0}i_{0}}\neq 0$), \ thanks to the above result, we obtain that
the Jacobson radical of an evolution algebra is very easy to determine (see
Example \ref{ejemrado}).

\begin{corollary}
\label{nuevo}If $A$ is a non-zero evolution algebra then,

$\mathrm{(i)}$ $A$ is semisimple if, and only if, $A$ a non-zero trivial
evolution algebra,

$\mathrm{(ii)}$ $A/Rad(A)$ is either $\{0\}$ or a non-zero trivial evolution
algebra.
\end{corollary}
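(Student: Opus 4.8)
The plan is to derive both statements directly from the explicit description of the radical obtained in Corollary \ref{Jradical}, namely $Rad(A)=lin\{e_i:i\in\Lambda\setminus\Lambda_m\}$, together with the definitions of a modular index and of a non-zero trivial evolution algebra.

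For part $\mathrm{(i)}$, I would observe that $A$ is semisimple precisely when $Rad(A)=\{0\}$, which by Corollary \ref{Jradical} happens if and only if $\Lambda=\Lambda_m$. It then remains to recognize that the condition $\Lambda=\Lambda_m$ is exactly the defining property of a non-zero trivial evolution algebra. Indeed, saying that every $i\in\Lambda$ is a modular index means $\omega_{ii}\neq 0$ and $\omega_{ji}=0$ for $j\neq i$, which is equivalent to $e_i^2=\omega_{ii}e_i$ with $\omega_{ii}\neq 0$ for every $i$; this is precisely what it means for $A$ to be a non-zero trivial evolution algebra. Thus part $\mathrm{(i)}$ is essentially a rereading of Corollary \ref{Jradical}.

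For part $\mathrm{(ii)}$, I would split into two cases according to whether $\Lambda_m$ is empty. If $\Lambda_m=\emptyset$ then $\Lambda\setminus\Lambda_m=\Lambda$, so $Rad(A)=A$ and $A/Rad(A)=\{0\}$. If $\Lambda_m\neq\emptyset$, the idea is to show that the images $\overline{e_i}$ of the basis vectors with $i\in\Lambda_m$ form a natural basis exhibiting the quotient as a non-zero trivial evolution algebra. Since $A=lin\{e_i:i\in\Lambda\}$ and $Rad(A)=lin\{e_i:i\in\Lambda\setminus\Lambda_m\}$, the family $\{\overline{e_i}:i\in\Lambda_m\}$ spans $A/Rad(A)$ and is linearly independent there; moreover $\overline{e_i}\,\overline{e_j}=\overline{e_ie_j}=0$ for $i\neq j$, so it is a natural basis. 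The crucial point is that for a modular index $i\in\Lambda_m$ one has $\omega_{ji}=0$ whenever $j\neq i$, hence $e_i^2=\omega_{ii}e_i$ holds exactly in $A$ (not merely modulo the radical); passing to the quotient gives $\overline{e_i}^{\,2}=\omega_{ii}\,\overline{e_i}$ with $\omega_{ii}\neq 0$. Therefore the structure matrix of $A/Rad(A)$ relative to this natural basis is diagonal with non-zero diagonal entries, which is precisely a non-zero trivial evolution algebra.

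The whole content of the argument is contained in Corollary \ref{Jradical}; the only step requiring care is the verification in part $\mathrm{(ii)}$ that $\{\overline{e_i}:i\in\Lambda_m\}$ is genuinely a natural basis of the quotient and that the diagonal structure constants $\omega_{ii}$ survive the projection intact. This is unproblematic, because for modular indices the off-diagonal structure constants $\omega_{ji}$ with $j\neq i$ already vanish in $A$, so no spurious off-diagonal contributions can appear in the quotient product. I would therefore expect no serious obstacle, the proof being a direct combination of the radical formula with the relevant definitions.
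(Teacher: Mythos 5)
Your proof is correct. Part (i) follows the paper exactly: both you and the author read off semisimplicity from Corollary \ref{Jradical} as the condition $\Lambda=\Lambda_m$ and identify this with the definition of a non-zero trivial evolution algebra. Part (ii), however, takes a genuinely different route. The paper's proof is two citations long: it invokes the fact that $A/Rad(A)$ is again an evolution algebra (Lemma 2.10 of the Cabrera--Siles--Velasco paper) and the general fact that the quotient of any algebra by its Jacobson radical is semisimple (Proposition 7 of Marcos--Velasco), and then simply applies part (i) to the quotient. You instead argue by hand: using $Rad(A)=lin\{e_i:i\in\Lambda\setminus\Lambda_m\}$, you exhibit $\{\overline{e_i}:i\in\Lambda_m\}$ as an explicit natural basis of $A/Rad(A)$ and observe that for a modular index the identity $e_i^2=\omega_{ii}e_i$ already holds in $A$ (since $\omega_{ji}=0$ for $j\neq i$ is part of the definition of modular index), so the diagonal structure passes to the quotient verbatim. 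Your argument is self-contained and constructive---it needs neither external citation, and it produces the structure matrix of the quotient explicitly---at the cost of a few routine verifications (spanning, independence, vanishing of mixed products) that the paper's more conceptual route gets for free from general theory. Both are valid; the paper's proof generalizes more readily (the semisimplicity of $A/Rad(A)$ holds for arbitrary algebras), whereas yours gives more concrete information about this particular quotient.
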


\begin{proof}
$\mathrm{(i)}$ Let $B:=\{e_{i}:i\in \Lambda \}$ a natural basis of $A.$ From
the above result $A$ is semisimple if, and only if, $\Lambda =\Lambda _{m},$
which means that all the indexes in $\Lambda $ are modular and hence $%
M_{B}(A)$ is a diagonal matrix with non-zero entries in the diagonal or,
equivalently, that $A$ is a non-zero trivial evolution algebra.

$\mathrm{(ii)}$ Since $A/Rad(A)$ is an evolution algebra by \cite[Lemma 2.10]%
{Ca-Si-Ve}, which is semisimple by \cite[Proposition 7]{Ma-Ve2}, the
assertion $\mathrm{(ii)}$ follows from $\mathrm{(i)}$.
\end{proof}

We have shown that evolution algebras are not semisimple (unless they are
trivial). Moreover for finite-dimensional evolution algebras we have the
following result which is obtained straightforwardly from Corollary \ref%
{anterior}, and Corollary \ref{nuevo}.

\begin{corollary}
\label{coroc}Let $A$ be a finite-dimensional evolution algebra. Then, $A$ is
semisimple if and only if $A$ has a unit.
\end{corollary}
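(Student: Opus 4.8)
The plan is to deduce this corollary directly from the two results the statement explicitly invokes, namely Corollary \ref{anterior} and Corollary \ref{nuevo}. First I would recall that Corollary \ref{anterior} asserts that an evolution algebra $A$ has a unit if, and only if, $A$ is a finite-dimensional non-zero trivial evolution algebra. Since the present corollary already assumes that $\dim A<\infty$, the finite-dimensionality hypothesis in Corollary \ref{anterior} is automatically met, so for our $A$ the condition ``$A$ has a unit'' becomes equivalent to the single condition ``$A$ is a non-zero trivial evolution algebra''. This is the first of the two reductions I would record.

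Second, I would invoke Corollary \ref{nuevo}(i), which states that a non-zero evolution algebra $A$ is semisimple if, and only if, $A$ is a non-zero trivial evolution algebra. (If $A=\{0\}$ the statement is vacuous or trivially true, so I would note at the outset that we may assume $A$ is non-zero, the zero algebra being semisimple and unital in a degenerate sense; more cleanly, both properties fail the ``non-zero trivial'' criterion in the same way, so the equivalence still holds.) Chaining the two equivalences gives precisely what we want: $A$ has a unit $\iff$ $A$ is a finite-dimensional non-zero trivial evolution algebra $\iff$ $A$ is a non-zero trivial evolution algebra (using $\dim A<\infty$) $\iff$ $A$ is semisimple.

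I do not anticipate a genuine obstacle, since the corollary is essentially a syllogism combining two previously established characterizations; the only point requiring a little care is matching the hypotheses. The subtlety is that Corollary \ref{anterior} supplies the descriptor ``finite-dimensional'' inside the ``unital'' characterization, while Corollary \ref{nuevo} supplies the ``non-zero trivial'' descriptor inside the ``semisimple'' characterization, and one must confirm that the common middle term ``finite-dimensional non-zero trivial evolution algebra'' collapses to just ``non-zero trivial evolution algebra'' exactly because finite-dimensionality is now a standing hypothesis rather than an extra condition. Thus the proof reduces to writing the chain of ``if and only if'' statements explicitly, as in:
\begin{equation*}
A \text{ has a unit} \iff A \text{ is a non-zero trivial evolution algebra} \iff A \text{ is semisimple},
\end{equation*}
where the first equivalence is Corollary \ref{anterior} together with the finite-dimensionality assumption and the second is Corollary \ref{nuevo}(i). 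This completes the argument.
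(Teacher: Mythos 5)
Your proposal is correct and follows exactly the paper's route: the paper gives no separate argument for this corollary, stating only that it is obtained straightforwardly from Corollary \ref{anterior} and Corollary \ref{nuevo}, which is precisely the syllogism you spell out (with the finite-dimensionality hypothesis collapsing the middle term). Your extra remark on the zero algebra is a harmless refinement of the same argument.
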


\begin{corollary}
Let $A$ be an evolution algebra and $B:=\{e_{i}:i\in \Lambda \}$ a natural
basis of $A.$ Then $e_{i}\in Rad(A)$ if and only if $e_{i}^{2}\in Rad(A).$
Therefore, 
\begin{equation*}
Rad(A)=lin\{e_{i}:i\in \Lambda _{Rad(A)}\}.
\end{equation*}
\end{corollary}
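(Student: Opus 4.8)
The plan is to deduce both assertions from Theorem \ref{cuadrado}, exploiting the fact that $Rad(A)$, although not necessarily a modular ideal itself, is an intersection of modular ideals and in particular an ideal. First I would dispose of the radical case: if $\Lambda_m=\emptyset$ then, by Corollary \ref{Jradical}, $Rad(A)=A$, so the equivalence holds trivially because $e_i\in A$ and $e_i^2\in A$ always. Hence I may assume $\Lambda_m\neq\emptyset$.

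For the equivalence I would write $Rad(A)=\dbigcap_{i_0\in\Lambda_m}M_{i_0}$, where by Corollary \ref{moduin} the family $\{M_{i_0}:i_0\in\Lambda_m\}$, with $M_{i_0}=lin\{e_i:i\in\Lambda\backslash\{i_0\}\}$, is exactly the collection of maximal modular ideals of $A$. Since each $M_{i_0}$ is a modular ideal, Theorem \ref{cuadrado}$\mathrm{(i)}$ gives $e_i\in M_{i_0}$ if and only if $e_i^2\in M_{i_0}$, for every $i_0\in\Lambda_m$. Intersecting over $i_0\in\Lambda_m$ then yields $e_i\in Rad(A)$ if and only if $e_i^2\in Rad(A)$, which is the first claim. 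The content here is purely formal once one observes that this particular equivalence is stable under intersection; the only bookkeeping is the trivial-radical case handled above.

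For the description of $Rad(A)$ through its support I would mimic the proof of Theorem \ref{cuadrado}$\mathrm{(ii)}$, substituting the equivalence just proved for Theorem \ref{cuadrado}$\mathrm{(i)}$. If $i\in\Lambda_{Rad(A)}$ then $\pi_i(Rad(A))\neq\{0\}$, so there is $a\in Rad(A)$ with $\pi_i(a)=\alpha_i\neq 0$; multiplying by $e_i$ and using $e_je_i=0$ for $j\neq i$ gives $ae_i=\alpha_ie_i^2\in Rad(A)$, since $Rad(A)$ is an ideal. Hence $e_i^2\in Rad(A)$, and therefore $e_i\in Rad(A)$ by the first part, which shows $lin\{e_i:i\in\Lambda_{Rad(A)}\}\subseteq Rad(A)$. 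The reverse inclusion is immediate because every element of $Rad(A)$ has support contained in $\Lambda_{Rad(A)}$, so $Rad(A)=lin\{e_i:i\in\Lambda_{Rad(A)}\}$.

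I expect no serious obstacle. The whole statement is a transfer of the two properties already established for individual modular ideals in Theorem \ref{cuadrado} to the ideal $Rad(A)$, and both properties survive intersection precisely because $Rad(A)$ remains an ideal. One could alternatively read everything off Corollary \ref{Jradical}, since $Rad(A)=lin\{e_i:i\in\Lambda\backslash\Lambda_m\}$ forces $\Lambda_{Rad(A)}=\Lambda\backslash\Lambda_m$; the one point worth stating explicitly is that it is the intersection-of-modular-ideals structure that makes the modular-ideal lemma applicable to the (possibly non-modular) ideal $Rad(A)$.
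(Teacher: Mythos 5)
Your proposal is correct and takes essentially the same route as the paper: both obtain the equivalence $e_{i}\in Rad(A)\Longleftrightarrow e_{i}^{2}\in Rad(A)$ by applying Theorem \ref{cuadrado}$\mathrm{(i)}$ to each maximal modular ideal and intersecting, and your support argument (multiplying an element of $Rad(A)$ by $e_{i}$ and using that $Rad(A)$ is an ideal, then invoking the first part) is precisely the step the paper dismisses as ``the rest is clear.'' Your explicit handling of the case $\Lambda _{m}=\emptyset $ and the appeal to Corollary \ref{moduin} for the explicit form of the maximal modular ideals are harmless extra detail --- the paper's argument covers the radical case vacuously and needs only that the ideals in question are modular.
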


\begin{proof}
If $e_{i}^{2}\in Rad(A)$ if and only if $e_{i}^{2}\in M$ for every maximal
modular ideal $M$ of $A$. Then, by Theorem \ref{cuadrado} \ we have that $%
e_{i}^{2}\in M$ if and only if $e_{i}\in M.$ Consequently $e_{i}^{2}\in
Rad(A)$ if, and only if, $e_{i}\in Rad(A)$. The rest is clear.
\end{proof}

The Jacobson radical of a general non-associative algebra was studied in 
\cite{Ma-Ve2}. There, the following results\ (that we include here for the
sake of completeness)\ were proved.

\begin{proposition}
Let $A$ be an algebra and let $A_{1}$ be the unitization of $A.$ Then $%
Rad(A)=Rad(A_{1}).$ Consequently $A$ is semisimple if, and only if, $A_{1}$
is semisimple.
\end{proposition}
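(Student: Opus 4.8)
The plan is to set up a bijective correspondence between the maximal modular ideals of $A$ and the maximal modular ideals of $A_{1}$ that are different from $A$ itself, and then to intersect. Since $A_{1}$ has the unit $\mathbf{1}$, every ideal of $A_{1}$ is modular, so its maximal modular ideals are precisely its maximal ideals. I would first observe that $A$ is itself a maximal modular ideal of $A_{1}$: it is a proper ideal and $A_{1}/A\cong \mathbb{K}\mathbf{1}$ is one-dimensional and unital, hence simple. In particular $Rad(A_{1})\subseteq A$, which is the inclusion that makes the final intersection land inside $A$.

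Next, for each maximal modular ideal $M$ of $A$ with modular unit $u$, I would define the subspace $\widetilde{M}:=M+\mathbb{K}(\mathbf{1}-u)$ of $A_{1}$. The key computation is that $\widetilde{M}$ is an ideal: writing a typical element as $(m-\lambda u)+\lambda \mathbf{1}$ and multiplying by $a+\mu \mathbf{1}$ according to the unitization product, the $A$-part of the result, after subtracting the appropriate multiple of $\mathbf{1}-u$, equals $ma+\mu m+\lambda(a-au)$, which lies in $M$ because $ma,\mu m\in M$ and, crucially, $a-au\in M$ by the defining property of the modular unit $u$. One then checks that $\widetilde{M}\cap A=M$ (the scalar component must vanish) and that $\widetilde{M}$ is proper (since $M$ is proper, $u\notin M$, so $\mathbf{1}\notin \widetilde{M}$). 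As $\mathbf{1}-u\in \widetilde{M}$ and $u\in A$ give $A+\widetilde{M}=A_{1}$, the isomorphism theorem yields $A_{1}/\widetilde{M}\cong A/(\widetilde{M}\cap A)=A/M$.

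Bijectivity then rests on the correspondence between maximal modular ideals and simple quotients. Since any ideal of $A$ containing the modular ideal $M$ is again modular with the same unit and corresponds to an ideal of the unital quotient $A/M$, the ideal $M$ is maximal modular if and only if $A/M$ is simple; combined with $A_{1}/\widetilde{M}\cong A/M$ this shows $\widetilde{M}$ is a maximal ideal of $A_{1}$. For surjectivity, given a maximal ideal $N\neq A$ of $A_{1}$, maximality forces $N+A=A_{1}$ (otherwise $A\subseteq N$, and maximality of $A$ gives $N=A$); writing $\mathbf{1}=n+u$ with $n\in N$ and $u\in A$, one gets $a-au=an\in N\cap A$ for all $a\in A$, so $u$ is a modular unit for $N\cap A$, the ideal $N\cap A$ is maximal modular in $A$, and $\widetilde{N\cap A}=N$.

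Finally, since the maximal modular ideals of $A_{1}$ are exactly $A$ together with the ideals $\widetilde{M}$, and $\widetilde{M}\cap A=M$, I would compute $Rad(A_{1})=A\cap\bigcap_{M}\widetilde{M}=\bigcap_{M}(A\cap\widetilde{M})=\bigcap_{M}M=Rad(A)$, where in the radical case ($A$ having no maximal modular ideals) there are no $\widetilde{M}$ and both sides equal $A$. The semisimplicity statement then follows immediately, since $Rad(A)=\{0\}$ if and only if $Rad(A_{1})=\{0\}$. I expect the main obstacle to be the ideal verification for $\widetilde{M}$, where the modular-unit identity $a-au\in M$ must be invoked at precisely the right place; the rest is bookkeeping with the isomorphism theorem.
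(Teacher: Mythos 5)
You should know at the outset that the paper contains no proof of this proposition: it is imported from \cite{Ma-Ve2} ``for the sake of completeness,'' so there is no internal argument to compare yours against, and your proof must stand on its own. It does, in the setting the paper actually works in. Your route is the classical unitization correspondence from associative Banach algebra theory: maximal modular ideals $M$ of $A$ correspond to the maximal ideals $\widetilde{M}=M+\mathbb{K}(\mathbf{1}-u)$ of $A_{1}$ other than $A$ itself, with $\widetilde{M}\cap A=M$. It goes through here precisely because no step invokes associativity: the ideal verification for $\widetilde{M}$ uses only bilinearity, the ideal property of $M$, and modularity of $u$; the second isomorphism theorem and the ideal correspondence in quotients are product-agnostic; and the observation that any ideal containing a modular ideal is modular with the same unit yields both the equivalence between maximality of $M$ and simplicity of the unital quotient $A/M$ and your surjectivity step $N=\widetilde{N\cap A}$. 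You also handle the radical case correctly, where $A$ is the unique maximal ideal of $A_{1}$ and both radicals equal $A$.

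One caveat: your ideal computation silently uses commutativity. Right-multiplying $(m-\lambda u)+\lambda\mathbf{1}$ by $a+\mu\mathbf{1}$ produces the $A$-part $ma+\mu m+\lambda(a-ua)$, while left-multiplying produces $am+\mu m+\lambda(a-au)$; your displayed formula mixes the two, which is harmless only when $au=ua$, and you check only one side of the ideal condition. Since this paper defines modular ideals and the Jacobson radical exclusively in the commutative convention (evolution algebras are commutative, and the left/right adjectives are explicitly dropped), your proof is complete for every use the paper makes of the proposition. But the statement as proved in \cite{Ma-Ve2} concerns arbitrary non-associative algebras, where the radical is built from maximal one-sided modular ideals and the largest two-sided ideals contained in them; covering that generality would require redoing your correspondence with left ideals and two-sided modular units, which is more bookkeeping but follows the same lines.
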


\begin{proposition}
If $A$ is a real algebra then, $Rad(A_{\mathbb{C}})\subseteq Rad(A)+iRad(A),$
where $A_{\mathbb{C}}$ denotes the complexification of $A.$ Therefore if $A$
is semisimple then, $A_{\mathbb{C}}$ is semisimple.
\end{proposition}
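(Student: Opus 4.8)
The plan is to localise the inclusion at each maximal modular ideal of $A$ and then to reduce to a semisimplicity statement for a complexified simple algebra. First I would record the elementary identity $\bigcap_{M}(M+iM)=Rad(A)+iRad(A)$, the intersection being taken over all maximal modular ideals $M$ of $A$. Indeed, the decomposition of an element of $A_{\mathbb{C}}$ into its real and imaginary parts is unique, so $a+ib\in M+iM$ if and only if $a\in M$ and $b\in M$; intersecting over $M$ turns this into $a\in\bigcap_{M}M=Rad(A)$ and $b\in Rad(A)$. Hence it suffices to prove that $Rad(A_{\mathbb{C}})\subseteq M+iM$ for every maximal modular ideal $M$ of $A$ (the case in which $A$ has no maximal modular ideals, i.e. $Rad(A)=A$, being trivial).

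Next I would check that $M+iM$ is itself a modular ideal of $A_{\mathbb{C}}$: if $u\in A$ is a modular unit for $M$, then $(a+ib)-(a+ib)u=(a-au)+i(b-bu)\in M+iM$ for all $a,b\in A$, so $u$ is again a modular unit. Moreover the obvious map exhibits $A_{\mathbb{C}}/(M+iM)\cong (A/M)_{\mathbb{C}}$. Because $M$ is a maximal modular ideal, $S:=A/M$ is unital and simple: in the unital algebra $S$ every ideal is modular, so a proper nonzero ideal of $S$ would pull back to a proper modular ideal of $A$ lying strictly between $M$ and $A$, contradicting maximality. Using the standard correspondence between the ideals of $A_{\mathbb{C}}$ containing the modular ideal $M+iM$ and the ideals of the unital quotient $A_{\mathbb{C}}/(M+iM)$ (which preserves maximality and modularity), the desired inclusion $Rad(A_{\mathbb{C}})\subseteq M+iM$ reduces to the statement $Rad\big((A/M)_{\mathbb{C}}\big)=\{0\}$, i.e. to the semisimplicity of $S_{\mathbb{C}}$.

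The heart of the matter, and the step I expect to be the main obstacle, is therefore the assertion that the complexification of a simple unital real algebra $S$ is semisimple. Here I would exploit the conjugation $\tau(a+ib)=a-ib$ on $S_{\mathbb{C}}$, which is a conjugate-linear algebra automorphism; a short verification shows that $\tau$ sends modular ideals to modular ideals and preserves inclusions, hence permutes the maximal modular ideals of $S_{\mathbb{C}}$, so $J:=Rad(S_{\mathbb{C}})$ is $\tau$-invariant. Writing $a=\frac{1}{2}(z+\tau z)$ and $ib=\frac{1}{2}(z-\tau z)$ for $z=a+ib\in J$, and using that $J$ is a complex ideal, one obtains $J=(J\cap S)+i(J\cap S)$, where $J\cap S$ is a genuine (real) ideal of $S$. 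Simplicity of $S$ forces $J\cap S\in\{\{0\},S\}$; the value $S$ is excluded because $S_{\mathbb{C}}$ is a nonzero unital algebra and therefore not radical (a maximal proper ideal of $S_{\mathbb{C}}$ exists by Zorn's lemma and is automatically a maximal modular ideal). Thus $J\cap S=\{0\}$ and $S_{\mathbb{C}}$ is semisimple.

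Assembling the pieces yields $Rad(A_{\mathbb{C}})\subseteq M+iM$ for every maximal modular ideal $M$ of $A$, and the identity of the first paragraph then gives $Rad(A_{\mathbb{C}})\subseteq Rad(A)+iRad(A)$. The corollary is immediate: if $A$ is semisimple then $Rad(A)=\{0\}$, whence $Rad(A_{\mathbb{C}})\subseteq\{0\}+i\{0\}=\{0\}$, so $A_{\mathbb{C}}$ is semisimple. The only delicate points are the non-associative ideal/modular-ideal correspondence for the unital quotient and the verification that $\tau$ preserves modularity; the remainder is bookkeeping.
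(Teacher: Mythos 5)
The paper never proves this proposition: it is one of the results ``included here for the sake of completeness'' and quoted from \cite{Ma-Ve2}, so there is no in-paper proof to compare yours against; what follows is an assessment of your argument on its own terms. In the setting in which this paper actually uses the result, namely commutative algebras, where $Rad(A)$ is by definition the intersection of the maximal modular (two-sided) ideals, your proof is correct, and each step checks out: the identity $\bigcap_{M}(M+iM)=Rad(A)+iRad(A)$; that $M+iM$ is a modular ideal of $A_{\mathbb{C}}$ with the same modular unit and $A_{\mathbb{C}}/(M+iM)\cong (A/M)_{\mathbb{C}}$; that $S:=A/M$ is unital and simple (any ideal of $A$ containing the modular ideal $M$ is itself modular with the same modular unit, so maximality of $M$ among modular ideals is maximality among all proper ideals containing it); that maximal modular ideals of the quotient pull back to maximal modular ideals of $A_{\mathbb{C}}$; and the conjugation argument, where the $\tau$-invariance of $J:=Rad(S_{\mathbb{C}})$ under $\tau(a+ib)=a-ib$, the simplicity of $S$, and the existence (via Zorn's lemma, using that proper ideals of a unital algebra avoid the unit) of a maximal proper, hence maximal modular, ideal of $S_{\mathbb{C}}$ combine to give $J=\{0\}$. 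This is a genuinely self-contained route, in contrast with the paper's bare citation.

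The one caveat is scope. As stated, the proposition concerns an arbitrary real algebra, and in \cite{Ma-Ve2} the Jacobson radical of a non-commutative (non-associative) algebra is \emph{not} defined as the intersection of maximal modular two-sided ideals: it is the intersection of the primitive ideals, a primitive ideal being the largest two-sided ideal contained in a maximal modular one-sided ideal (the paper recalls exactly this at the start of Section 3 before specializing to commutative algebras, where the two notions coincide). Your proof silently uses the commutative-case description of the radical, and the key reduction, namely that the quotient by each of the ideals defining the radical is simple and unital, fails for primitive ideals in the non-commutative case. So what you have proved is the proposition for commutative algebras. That covers every application made in this paper, since evolution algebras are commutative, but it is not the statement of \cite{Ma-Ve2} in full generality; to obtain the general version you would need to rework the localization step with one-sided modular ideals and primitive ideals.
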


Given an evolution algebra $A,$ we define $%
a^{n}=a(a(a....(aa))))=L_{a}^{n-1}(a)$ where $L_{a}(b)=ab=ba,$ for every
\thinspace $a,b\in A.$

The set of all nilpotent elements of an evolution algebra is not necessarily
an ideal as the next example shows.

\begin{example}
Let $A$ be an evolution algebra with a natural basis $B:=\{e_{1},e_{2}\}$
such that $e_{1}^{2}=e_{2}$ and $e_{2}^{2}=e_{1}.$ Then $%
e_{1}^{3}=e_{1}e_{1}^{2}=e_{1}^{2}e_{1}=0$ and similarly $e_{2}^{3}=0.$ But $%
(e_{1}+e_{2})^{2}=e_{1}+e_{2}$ so that $(e_{1}+e_{2})^{n}=e_{1}+e_{2}.$
\end{example}

If an element $a$ of an evolution algebra $A$ is nilpotent then $a$ is
quasi-invertible. Indeed if $a^{n}=0$ then \ for $b=-(a+a^{2}+....+a^{n-1})$
we have that $a+b-ba=0$, so $b$ is a quasi-inverse of $a.$

In \cite[Proposition 9]{Ma-Ve2}~\ it was proved that if $Q$ is a
quasi-invertible ideal of a non-associative algebra then $Q\subseteq Rad(A).$
Therefore we obtain the following result.

\begin{proposition}
\label{nilpo}If $A$ is an evolution algebra then every quasi-invertible
ideal of $A$, and particularly every nilpotent ideal, is contained in $%
Rad(A).$
\end{proposition}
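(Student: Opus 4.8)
The plan is to reduce both assertions to the result cited immediately before the statement, namely \cite[Proposition 9]{Ma-Ve2}, which guarantees that any quasi-invertible ideal $Q$ of a non-associative algebra satisfies $Q\subseteq Rad(A)$. Since an evolution algebra is, in particular, a commutative non-associative algebra, the first claim is nothing more than a direct invocation of this result, with no further work needed.

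For the second, "particularly", claim about nilpotent ideals, I would first show that every nilpotent ideal is in fact a quasi-invertible ideal, and then apply the first part. Concretely, let $I$ be a nilpotent ideal, so that $I^{n}=\{0\}$ for some $n\in\mathbb{N}$. The key step is to verify that every $a\in I$ is nilpotent as an element: the power $a^{n}=L_{a}^{n-1}(a)$ is a product of $n$ factors, each equal to $a\in I$, and therefore lies in $I^{n}=\{0\}$, whence $a^{n}=0$. Once this is established, I would recall the computation made just above the statement, namely that if $a^{n}=0$ then $b=-(a+a^{2}+\cdots+a^{n-1})$ is a quasi-inverse of $a$; this shows that every element of $I$ is quasi-invertible, i.e. that $I$ is a quasi-invertible ideal. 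Combining this with the first part yields $I\subseteq Rad(A)$.

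There is essentially no serious obstacle here, the proposition being an immediate consequence of the cited theorem. The only point requiring a moment of care, and the closest thing to a difficulty, is the bookkeeping for the non-associative powers: one must confirm that the specific association defining $a^{n}=L_{a}^{n-1}(a)$ indeed produces an element of the $n$-th power ideal $I^{n}$, so that the hypothesis $I^{n}=\{0\}$ genuinely forces $a^{n}=0$. With that verification in hand, the argument closes.
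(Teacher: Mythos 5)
Your proof is correct and takes essentially the same route as the paper: the paper obtains the proposition by combining \cite[Proposition 9]{Ma-Ve2} with the observation made immediately before the statement that a nilpotent element $a$ (with $a^{n}=0$) has quasi-inverse $b=-(a+a^{2}+\cdots +a^{n-1})$, so that nilpotent ideals are quasi-invertible ideals. Your extra verification that $a^{n}=L_{a}^{n-1}(a)$ lies in $I^{n}$ simply makes explicit a bookkeeping step the paper leaves implicit.
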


We point out that the radical of an evolution algebra may contain non
quasi-invertible elements, as we show next.

\begin{example}
\label{quasi-inver}\emph{Let }$A$\emph{\ be an evolution algebra with a
natural basis }$B:=\{e_{1},e_{2}\}$\emph{\ such that }$e_{1}^{2}=e_{1}$\emph{%
\ and }$e_{2}^{2}=e_{1}+e_{2}.$\emph{\ Then, by Corollary \ref{Jradical} we
have that }$Rad(A)=\mathbb{K}e_{1},$\emph{\ and }$e_{1}$\emph{\ is not
quasi-invertible. Note that the only non trivial modular ideal of }$A$\emph{%
\ is }$\mathbb{K}e_{1}$\emph{\ as it can be deduced from Corollary \ref%
{coroa}.}
\end{example}

$\hfill \square $

\section{Reviewing the notion of spectrum in the non-associative setting 
\label{espec}}

Since the beginning of the XX century, a great variety of non-associative
algebras have been used to model many phenomena in different scientific
contexts of Biology, Physics, or Engineering (see for instance \cite%
{Beck,Gulak,Okubo,Reed,Worz-Busekros}). Nevertheless, for non- associative
algebras, the notion of spectrum of an element was not considered before 
\cite{Ve} and \cite{Ma-Ve3}. Indeed the definition of an invertible element
in a non-associative algebra with a unit never was considered either.

Throughout this section $A$ will denote an algebra which does not need to be
associative or commutative. The classical notion of invertible element of an
associative algebra with a unit was extended to the non-associative setting
in the following two different ways, given in Definition \ref{definver1} and
Definition \ref{definver2} below.

\begin{definition}
\cite[Definition 2]{Ve}\label{definver1} \emph{Let }$A$\emph{\ be an algebra
with a unit }$e$\emph{. We say that }$a\in A$\emph{\ is \textbf{invertible}
if }$a$\emph{\ has a left and a right inverse (that is, there exists }$%
b,c\in A$\emph{\ such that }$bc=ca=e$\emph{). The set of invertible elements
of the algebra }$A$\emph{\ was denoted by }$inv(A).$
\end{definition}

The definition of spectrum related to the above definition of invertible
element is the next one (originally established in \cite[Definition 2]{Ve}).

\begin{definition}
\label{espe1}\emph{For a complex algebra }$A$\emph{\ with a unit} $e$\emph{,
the\textbf{\ spectrum} of }$a\in A$\emph{\ is defined as the set}%
\begin{equation*}
\sigma ^{A}(a):=\{\lambda \in \mathbb{C}:a-\lambda e\text{ is not invertible}%
\}.
\end{equation*}%
\emph{As in the associative case, the spectrum of an element }$a$ \emph{in a
complex algebra }$A$ \emph{without a unit \ is defined as }$\sigma
^{A_{1}}(a),$ \emph{that is the spectrum of }$a$ \emph{\ in }$A_{1}$ \emph{%
(the unitization of }$A$\emph{)}$.$\emph{\ Similarly the spectrum of an
element }$a$\emph{\ in a real algebra }$A$ \emph{is defined as }$\sigma ^{A_{%
\mathbb{C}}}(a),$ \emph{where }$A_{\mathbb{C}}$ \emph{is the
complexification of }$A.$
\end{definition}

To study the invertibility of an element in the unitization algebra $A_{1}$
in terms of the algebra $A$ \ we use the notion of quasi-invertible element
(as in the associative case). We recall that for an arbitrary algebra $A$%
\emph{\ }(associative or not)\ we say that $a\in A$\ is \textbf{left
quasi-invertible} if there exists $b\in A$\ such that $a+b-ab=0.$\ Similarly
it is said that $a$\ is \textbf{right quasi-invertible} if there exists $%
c\in A$\ such that\emph{\ }$a+c-ca=0.$\emph{\ }Finally we say that $a$\ is 
\textbf{quasi-invertible} if $a$\ is both left and right quasi-invertible.
We denote by $q-inv(A)$\ the set of quasi-invertible elements in $A.$

For an algebra $A$ without a unit, it was proved in \cite{Ve} that for every 
$a\in A,$

\begin{equation}
\sigma ^{A}(a):=\{0\}\cup \{\lambda \in \mathbb{C}\backslash \{0\}:\frac{a}{%
\lambda }\notin q-inv(A)\}.  \label{quasi}
\end{equation}%
Indeed, it is clear that if $A$ has no a unit then, $0\in \sigma ^{A}(a)$
for every $a\in A$ because $a\notin inv(A_{1}).$ Moreover if $\lambda \neq 0$
then $a-\lambda \mathbf{1}\in inv(A_{1})$ if and only if there exists $b\in
A $ such that $(a-\lambda \mathbf{1)(}b-\frac{1}{\lambda }\mathbf{1)=1}$,
which means that $\lambda b$ is a quasi-inverse of $\frac{a}{\lambda }.$
Thus $a-\lambda \mathbf{1}\in inv(A_{1})$ if and only if $\frac{a}{\lambda }%
\in q-inv(A).\,\ $Therefore we conclude that if $A$ is an algebra without a
unit then, 
\begin{equation}
\sigma ^{A}(a)\backslash \{0\}=\{\lambda \in \mathbb{C}\backslash \{0\}:%
\frac{a}{\lambda }\notin q-inv(A)\},  \label{quasi2}
\end{equation}%
for every $a\in A,$ which generalizes what happens whenever $A$ is
associative to the general setting that we are considering. If $A$ has a
unit, $e,$ a similar argument by replacing $1$ with $e$ shows us that (\ref%
{quasi2}) holds and, in this case, $0\in \sigma ^{A}(a)$ if and only if $%
a\notin inv(A).$

\bigskip

On the other hand, along the last century, non-associative division algebras
were considered to generalize the Gelfand-Mazur theorem among other goals
(see for instance \cite{Alb, Schafer,Ka1,Ka2}). Indeed, a main problem was
to look for fields, other than $\mathbb{C},$ to study their associated
geometries.

In these works, a \textbf{division algebra} was defined as an algebra $A$
with a unit (non necessarily associative) such that the left and right
multiplication operators, \thinspace $L_{a}$ and $R_{a},$ associated to
every non-zero element $a\in A$ are bijective. (We recall that $L_{a}$ \ and 
$R_{a}$ \ are, respectively, the operators on $A$ given by $L_{a}(b)=ab$ and 
$R_{a}(b)=ba,$ for every $b\in A$)$.$ In \cite{Ka2}, I. Kaplansky proved
that \textit{left division complete normed (non-associative) algebras are
isomorphic to} $\mathbb{C}$. Many years ago, I. Kaplansky had obtained
similar characterizations of the field of complex numbers in \cite{Ka1}.
Inspired by these facts, in \cite[Definition 2.1]{Ma-Ve3} (see also \cite%
{Ma-Ve4}) the following definition of an invertible element was established.
This definition is nothing but \cite[Proposition 1.19]{Bonsall} free of the
requirement of associativity.

\begin{definition}
\label{definver2}\emph{Let }$A$\emph{\ be an algebra with a unit. We say
that }$a\in A$\emph{\ is \textbf{multiplicatively invertible} (shortly 
\textbf{m-invertible}) if the corresponding multiplication operators }$L_{a}$%
\emph{\ and }$R_{a}$\emph{\ \ are bijective.}
\end{definition}

In relation with this definition of invertibility, the corresponding
definition of the spectrum that we establish next was originally introduced
in \cite[Definition 2.1]{Ma-Ve3}.

\begin{definition}
\emph{Let }$A$\emph{\ be a complex algebra with a unit. We define the\textbf{%
\ multiplicative spectrum,} or shortly the \textbf{m-spectrum}, of }$a\in A$%
\emph{\ as the set }%
\begin{equation*}
\sigma _{m}^{A}(a):=\{\lambda \in \mathbb{C}:a-\lambda e\text{ is not
m-invertible}\}.
\end{equation*}%
\emph{If }$A$\emph{\ is a complex algebra without a unit then, we define the
m-spectrum of }$a\in A$\emph{\ as the set }$\sigma _{m}^{A_{1}}(a)$\emph{\
where }$A_{1}\,\ $\emph{denotes the unitization of }$A.$\emph{\ Finally, if }%
$A$\emph{\ is a real algebra then, we define the m-spectrum of }$a\in A$%
\emph{\ as the set }$\sigma _{m}^{A_{\mathbb{C}}}(a)$\emph{\ where }$A_{%
\mathbb{C}}\,\ $\emph{denotes the complexification of }$A.$
\end{definition}

Therefore, for a complex algebra with a unit, the m-spectrum and the
spectrum of an element are related as follows%
\begin{equation*}
\sigma _{m}^{A}(a)=\sigma ^{\mathcal{L}(A)}(L_{a})\cup \sigma ^{\mathcal{L}%
(A)}(R_{a}),
\end{equation*}%
where $\mathcal{L}(A)$ denotes algebra of all linear operators on $A,$ which
is associative (even if $A$ is not it). Moreover from the Banach isomorphism
theorem, if $A$ is a complete complex normed algebra we have that 
\begin{equation*}
\sigma _{m}^{A}(a)=\sigma ^{L(A)}(L_{a})\cup \sigma ^{L(A)}(R_{a}),
\end{equation*}%
where $L(A)$ denotes the Banach algebra of all bounded linear operators on $%
A.$

As proved in\ \cite[Proposition 2.2]{Ma-Ve3}, for an arbitrary algebra $A$
and $a\in A,$ 
\begin{equation*}
\sigma _{m}^{A}(a)\backslash \{0\}=\sigma ^{\mathcal{L}(A)}(L_{a})\cup
\sigma ^{\mathcal{L}(A)}(R_{a}).
\end{equation*}%
In fact, if $A$ has not a unit then, 
\begin{equation}
\sigma _{m}^{A}(a)=\sigma ^{\mathcal{L}(A_{1})}(L_{a})\cup \sigma ^{\mathcal{%
L}(A_{1})}(R_{a})=\sigma ^{\mathcal{L}(A)}(L_{a})\cup \sigma ^{\mathcal{L}%
(A)}(R_{a})\cup \{0\},  \label{tres}
\end{equation}%
for every $a\in A,$ meanwhile if $A$ has a unit then, by definition, as said
already, 
\begin{equation*}
\sigma _{m}^{A}(a)=\sigma ^{\mathcal{L}(A)}(L_{a})\cup \sigma ^{\mathcal{L}%
(A)}(R_{a})\,.
\end{equation*}%
Moreover, as it is well known, if $A$ is an associative then,\ $\sigma
_{{}}^{A}(a)=\sigma _{m}^{A}(a),$ and this set is nothing but the classical
spectrum of $a$. However, for a non-associative algebra $A,$ and $a\in A,$
it turns out that $\sigma ^{A}(a)$ and $\sigma _{m}^{A}(a)$ may be
different. Of course, the following relation between the spectrum and the
m-spectrum is obvious 
\begin{equation*}
\sigma _{{}}^{A}(a)\subseteq \sigma _{m}^{A}(a).
\end{equation*}%
In the next example we show an element in an evolution algebra for which the
above inclusion is strict.

\begin{example}
\label{estrict}\emph{Let }$A$\emph{\ be the algebra generated by }$%
B:=\{e_{1},e_{2}\}$\emph{\ with structure matrix given by }%
\begin{equation*}
\left( 
\begin{array}{rr}
-\frac{1}{2}\smallskip \vspace{0.09in} & \text{ }\frac{3}{4} \\ 
-\frac{1}{3} & \frac{1}{2}%
\end{array}%
\right) .
\end{equation*}%
\emph{Let }$a=3e_{1}+2e_{2}.$\emph{\ If }$b=2e_{1}+\frac{4}{3}e_{2}$ \emph{%
then, }$ab=2e_{1}^{2}$ \emph{because} \emph{\ }%
\begin{equation*}
ab=6e_{1}^{2}+\frac{8}{3}e_{2}^{2}=-6(\frac{1}{2}e_{1}+\frac{1}{3}e_{2})+%
\frac{8}{3}(\frac{3}{4}e_{1}+\frac{1}{2}e_{2})=-3e_{1}-2e_{2}+2e_{1}+\frac{4%
}{3}e_{2}=-e_{1}-\frac{2}{3}e_{2}.
\end{equation*}%
\emph{Therefore }%
\begin{equation*}
-\frac{1}{2}a+b+\frac{1}{2}ab=-(\frac{3}{2}e_{1}+e_{2})+2e_{1}+\frac{4}{3}%
e_{2}+\frac{1}{2}(-e_{1}-\frac{2}{3}e_{2})=\allowbreak 0,
\end{equation*}%
\emph{so that }$-\frac{1}{2}\notin \sigma ^{A}(a).$\emph{\ However}%
\begin{equation*}
(L_{a}+\frac{1}{2}I)a=(3e_{1}+2e_{2})^{2}+\frac{1}{2}%
(3e_{1}+2e_{2})=9e_{1}^{2}+4e_{2}^{2}+\frac{1}{2}(3e_{1}+2e_{2})=
\end{equation*}%
\begin{equation*}
-9(\frac{1}{2}e_{1}+\frac{1}{3}e_{2})+4(\frac{3}{4}e_{1}+\frac{1}{2}e_{2})+%
\frac{1}{2}(3e_{1}+2e_{2})=0.
\end{equation*}%
\emph{which proves that }$L_{a}+\frac{1}{2}I$\emph{\ is not injective and
hence }$-\frac{1}{2}\in \sigma _{m}^{A}(a).$ \emph{Consequently }$\sigma
_{{}}^{A}(a)$\emph{\ is strictly contained into }$\sigma _{m}^{A}(a).$
\end{example}

Associated to each notion of spectrum we have the corresponding notion of
spectral radius.

\begin{definition}
\label{radius}\emph{Let }$A$\emph{\ be an algebra and let }$a\in A.$\emph{\
We define the \textbf{spectral radius} of }$a$\emph{\ as the value given by }%
\begin{equation*}
\rho (a):=\sup \{\left\vert \lambda \right\vert :\lambda \in \sigma
_{{}}^{A}(a)\},
\end{equation*}%
\emph{if }$\sigma _{{}}^{A}(a)$\emph{\ is non-empty and }$\rho (a):=\{0\}$%
\emph{\ otherwise. Thus, }$0\leq \rho (a)\leq \infty .$

\emph{Similarly we define the \textbf{m-spectral radius} of }$a$\emph{\ as
the value}%
\begin{equation*}
\rho _{m}(a):=\sup \{\left\vert \lambda \right\vert :\lambda \in \sigma
_{m}^{A}(a)\},
\end{equation*}%
\emph{if }$\sigma _{m}^{A}(a)$\emph{\ is non-empty, and }$\rho _{m}(a):=\{0\}
$\emph{\ otherwise (note that if }$A$\emph{\ does not need to be normed and
hence }$\sigma _{m}^{A}(a)$\emph{\ may be empty). }

\emph{Since }$\sigma _{{}}^{A}(a)\subseteq \sigma _{m}^{A}(a)$\emph{\ we
have that\ }$\rho (a)\leq \rho _{m}(a),$\emph{\ for every }$a\in A.$
\end{definition}

In the classical theory of Banach algebras the spectrum has a relevant role
in relation to the radical of the algebra. In fact, as is said in \cite[p.
189]{Palmer} (see also \cite[Theorem 4.3.6]{Dales}) \textit{the Jacobson
radical of an associative algebra }$A$ \textit{can be described as the
largest ideal on which the spectral radius of each element is identically
zero. Consequently, }$A$ \textit{is semisimple if its radical is zero, that
is if }$\{0\}$\textit{\ is the only ideal contained in the set of elements
having spectral radius equal to zero.}

According to this quote, since we have two natural notions of spectral
radius, we consider the following definitions of simplicity.

\begin{definition}
\label{semisimplicity}\emph{We say that an algebra }$A$\emph{\ is \textbf{%
spectrally semisimple }if zero is the unique ideal of }$A$\emph{\ contained
in the set }$\{a\in A:\rho (a)=0\}.$\emph{\ Similarly, we say that }$A$\emph{%
\ is \textbf{multiplicatively semisimple }or \textbf{m-semisimple }if \ zero
is the unique ideal of }$A$\emph{\ contained in the set }$\{a\in A:\rho
_{m}(a)=0\}.$
\end{definition}

If $A$ is associative then the two notions of semisimplicity defined above
coincide and mean precisely that the Jacobson radical of $A$ is zero (see 
\cite[Theorem 4.3.6]{Palmer}). Nevertheless for general algebras we have the
following.

\begin{proposition}
\label{rela}Let $A$ be an algebra, and consider the following assertions:

$\mathrm{(i)}$ $A$ is semisimple,

$\mathrm{(ii)}$ $A$ spectrally semisimple,

$\mathrm{(iii)}$ $A$ is m-semisimple.

Then $\mathrm{(i)}$ $\Rightarrow \mathrm{(ii)}\Rightarrow \mathrm{(iii)}.$
Moreover, if $A$ is associative then these assertions are equivalent.
\end{proposition}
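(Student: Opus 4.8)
The plan is to establish the chain $\mathrm{(i)}\Rightarrow\mathrm{(ii)}\Rightarrow\mathrm{(iii)}$ first, and then to close the loop assuming associativity. Throughout I abbreviate $Z:=\{a\in A:\rho(a)=0\}$ and $Z_{m}:=\{a\in A:\rho_{m}(a)=0\}$, the two sets occurring in Definition \ref{semisimplicity}. Everything is driven by one structural fact: the inclusion $\sigma^{A}(a)\subseteq\sigma_{m}^{A}(a)$ gives $\rho(a)\leq\rho_{m}(a)$ for every $a\in A$ (as noted in Definition \ref{radius}), and hence $Z_{m}\subseteq Z$. The implication $\mathrm{(ii)}\Rightarrow\mathrm{(iii)}$ is then purely set-theoretic: if $A$ is spectrally semisimple and $I$ is an ideal with $I\subseteq Z_{m}$, then $I\subseteq Z$, so $I=\{0\}$; since this holds for every ideal contained in $Z_{m}$, the algebra $A$ is m-semisimple.

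The substantive step is $\mathrm{(i)}\Rightarrow\mathrm{(ii)}$, which I would route through quasi-invertibility rather than through the spectral radius directly. Let $I$ be an ideal contained in $Z$. For each $a\in I$ we have $\rho(a)=0$, so $\sigma^{A}(a)\subseteq\{0\}$ and therefore $\sigma^{A}(a)\setminus\{0\}=\emptyset$. By \eqref{quasi2} this means $a/\lambda\in q-inv(A)$ for every $\lambda\neq0$; taking $\lambda=1$ gives $a\in q-inv(A)$. Hence every element of $I$ is quasi-invertible, i.e. $I$ is a quasi-invertible ideal, and Proposition \ref{nilpo} yields $I\subseteq Rad(A)$. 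Since $A$ is semisimple we have $Rad(A)=\{0\}$, so $I=\{0\}$, which is exactly spectral semisimplicity.

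For the associative case I would first observe that associativity gives $\sigma^{A}(a)=\sigma_{m}^{A}(a)$ for all $a$, whence $Z=Z_{m}$ and $\mathrm{(ii)}\Leftrightarrow\mathrm{(iii)}$ is immediate. The only missing arrow is then $\mathrm{(ii)}\Rightarrow\mathrm{(i)}$. For this I would invoke the classical description recalled just before Definition \ref{semisimplicity} (see \cite[Theorem 4.3.6]{Dales} and \cite[p. 189]{Palmer}): in an associative algebra every element of $Rad(A)$ is spectrally null, so $Rad(A)\subseteq Z$. As $Rad(A)$ is an ideal contained in $Z$, spectral semisimplicity forces $Rad(A)=\{0\}$, i.e. $A$ is semisimple. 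Combined with $\mathrm{(i)}\Rightarrow\mathrm{(ii)}\Rightarrow\mathrm{(iii)}$ and $\mathrm{(ii)}\Leftrightarrow\mathrm{(iii)}$, this gives the equivalence of all three assertions.

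The two forward implications are formal: they use only $\sigma^{A}(a)\subseteq\sigma_{m}^{A}(a)$, the formula \eqref{quasi2}, and Proposition \ref{nilpo}, and require no associativity whatsoever. I therefore expect the main obstacle to be the return arrow $\mathrm{(ii)}\Rightarrow\mathrm{(i)}$, which genuinely uses the associative fact that radical elements have zero spectral radius. If one prefers to avoid quoting the Banach-algebra statement, the inclusion $Rad(A)\subseteq Z$ can be recovered algebraically: in the associative case $Rad(A)$ is a quasi-invertible ideal, so each $\lambda a$ with $a\in Rad(A)$ is quasi-invertible, and \eqref{quasi2} then gives $\sigma^{A}(a)\subseteq\{0\}$, i.e. $\rho(a)=0$.
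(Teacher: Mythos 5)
Your proof is correct, and the forward chain is essentially the paper's own argument: the paper also proves $\mathrm{(i)}\Rightarrow\mathrm{(ii)}$ by observing that an ideal inside $\{a\in A:\rho(a)=0\}$ is quasi-invertible (via \eqref{quasi2}) and hence lies in $Rad(A)$ by \cite[Proposition 9]{Ma-Ve2}, the only cosmetic difference being that the paper phrases it contrapositively, and it proves $\mathrm{(ii)}\Rightarrow\mathrm{(iii)}$ by the same inclusion $\rho\leq\rho_{m}$. One small citation point: you invoke Proposition \ref{nilpo}, which is stated only for evolution algebras, whereas Proposition \ref{rela} concerns an arbitrary algebra; the correct reference is the general result \cite[Proposition 9]{Ma-Ve2} quoted in the paper immediately before Proposition \ref{nilpo}, so nothing in your argument actually breaks. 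Where you genuinely diverge is the associative case: the paper simply cites \cite[Theorem 4.3.6]{Palmer} for $\mathrm{(iii)}\Rightarrow\mathrm{(i)}$, while you decompose the equivalence into $\mathrm{(ii)}\Leftrightarrow\mathrm{(iii)}$ (from $\sigma^{A}=\sigma_{m}^{A}$ in the associative setting) plus $\mathrm{(ii)}\Rightarrow\mathrm{(i)}$, and you back the latter with a self-contained algebraic argument: $Rad(A)$ is a quasi-invertible ideal in the associative case, so \eqref{quasi2} forces $\sigma^{A}(a)\subseteq\{0\}$ for all $a\in Rad(A)$, i.e. $Rad(A)\subseteq\{a\in A:\rho(a)=0\}$, and spectral semisimplicity then kills $Rad(A)$. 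Your route is longer but purely algebraic and avoids importing the Banach-algebra machinery wholesale; the paper's citation is shorter but leaves the closing arrow entirely to the literature.
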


\begin{proof}
$\mathrm{(i)}$ $\Rightarrow \mathrm{(ii).}$ If $A$ is not spectrally
semisimple then there exists a non-zero ideal $Q$ contained in $\{a\in
A:\rho (a)=0\}.$ Therefore $Q$ is quasi-invertible because $1\notin \sigma
_{{}}^{A}(a)$ for every $a\in A.$ By \cite[Proposition 9]{Ma-Ve2} we have
that $Q\subseteq Rad(A)$ and hence $A$ is not semisimple. The assertion $%
\mathrm{(ii)}$ $\Rightarrow \mathrm{(iii)}$ is obvious because $\rho
_{m}(a)=0~$implies that $\rho (a)=0,$ for $a\in A.\,$\ For the proof of the
fact \ that $\mathrm{(iii)}$ $\Rightarrow \mathrm{(i)}$ if $A$ is
associative, see for instance \cite[Theorem 4.3.6]{Palmer}.
\end{proof}

In \ \cite{Ma-Ve3} (see Theorem 3.5 and Corollary 3.6) we proved the
following result in the general non-associative setting.

\begin{theorem}
E\textit{every surjective homomorphism from a Banach algebra onto a
m-semisimple Banach algebra, is continuous}. Consequently, \textit{%
m-semisimple Banach algebras has a unique complete norm topology.}
\end{theorem}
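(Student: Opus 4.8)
The plan is to derive the uniqueness of complete norm topology (the second assertion) from the automatic continuity statement (the first one), and to prove the latter by a separating space argument adapted to the m-spectrum. For the reduction, suppose $B$ carries two complete algebra norms $\|\cdot\|_1$ and $\|\cdot\|_2$, both making it m-semisimple; applying the automatic continuity theorem to the identity map viewed as a surjective homomorphism $(B,\|\cdot\|_1)\to(B,\|\cdot\|_2)$, and once more with the roles of the norms reversed, one obtains that the identity is bounded both ways, so the two norms are equivalent by the open mapping theorem. Thus everything reduces to proving that a surjective homomorphism $\phi\colon A\to B$ between Banach algebras, with $B$ m-semisimple, is automatically continuous.

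First I would introduce the separating space $\mathfrak{S}(\phi):=\{b\in B:\text{there is }(a_n)\subseteq A\text{ with }a_n\to 0\text{ and }\phi(a_n)\to b\}$. By the closed graph theorem, $\phi$ is continuous if and only if $\mathfrak{S}(\phi)=\{0\}$, so the whole problem becomes showing that $\mathfrak{S}(\phi)$ is trivial. A routine computation, using that multiplication in a Banach algebra is jointly continuous and that $\phi$ is a surjective homomorphism, shows that $\mathfrak{S}(\phi)$ is a closed \emph{ideal} of $B$: if $b\in\mathfrak{S}(\phi)$ is the limit of $\phi(a_n)$ with $a_n\to 0$, and $c=\phi(d)\in B$, then $cb$ and $bc$ are limits of $\phi(da_n)$ and $\phi(a_nd)$ with $da_n,a_nd\to 0$. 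Since $B$ is m-semisimple, the only ideal contained in $\{b\in B:\rho_m(b)=0\}$ is $\{0\}$, so it suffices to prove $\mathfrak{S}(\phi)\subseteq\{b\in B:\rho_m(b)=0\}$.

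The key device is to transfer the question into the associative Banach algebra $L(B)$ of bounded operators on $B$. By the description of the m-spectrum recalled above, $\sigma_m^B(b)\setminus\{0\}=\sigma^{L(B)}(L_b)\cup\sigma^{L(B)}(R_b)$, so $\rho_m(b)=0$ exactly when both $L_b$ and $R_b$ are quasinilpotent in $L(B)$. Because $\phi$ is a homomorphism one has the intertwining identities $\phi\circ L_a=L_{\phi(a)}\circ\phi$ and $\phi\circ R_a=R_{\phi(a)}\circ\phi$ for every $a\in A$ (here $L_a,R_a$ denote the multiplications in $A$ and $L_{\phi(a)},R_{\phi(a)}$ those in $B$); I emphasize that these identities use only the homomorphism property and \emph{not} any associativity of $B$. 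Fixing $b\in\mathfrak{S}(\phi)$ and choosing $c\in A$ with $\phi(c)=b$ (possible by surjectivity), I would feed the intertwined pairs $(L_c,L_b)$ and $(R_c,R_b)$ into Sinclair's stability lemma for separating spaces; combined with the fact that $\mathfrak{S}(\phi)$ is an ideal containing $b$, the resulting stabilisation of the chains $\overline{L_b^n\,\mathfrak{S}(\phi)}$ and $\overline{R_b^n\,\mathfrak{S}(\phi)}$ should yield that $L_b$ and $R_b$ are quasinilpotent in $L(B)$, hence $\rho_m(b)=0$.

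The main obstacle is exactly this last step. In the classical associative theory one proves uniqueness of norm by applying Johnson's theorem directly to $B$, but here the assignment $b\mapsto L_b$ fails to be multiplicative, since $L_{bc}\neq L_bL_c$ in a genuinely non-associative algebra, so the semisimplicity of $B$ cannot be exploited through a representation of $B$ itself. The point of the argument is to bypass this by never representing $B$ associatively: the intertwining relations, which survive the loss of associativity because they only involve $\phi$, let one run the separating-space and stability machinery inside the \emph{associative} algebra $L(B)$, where the usual spectral-radius and quasinilpotency tools apply, and then transport the conclusion back through the displayed m-spectrum formula. Once $\mathfrak{S}(\phi)\subseteq\{b\in B:\rho_m(b)=0\}$ is established, m-semisimplicity forces $\mathfrak{S}(\phi)=\{0\}$, and therefore $\phi$ is continuous, completing the proof.
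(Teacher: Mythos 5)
First, a point of context: the paper you are working from does not actually prove this theorem; it is quoted from \cite{Ma-Ve3} (Theorem 3.5 and Corollary 3.6), so your proposal has to be measured against that cited argument and against what is genuinely provable. Your skeleton is the right one, and it is the standard frame for such results: the reduction of uniqueness of the complete norm topology to automatic continuity via the identity map (note in passing that m-semisimplicity is a purely algebraic, norm-independent property, since the m-spectrum is defined by bijectivity of $L_a-\lambda$ and $R_a-\lambda$, so the clause ``both making it m-semisimple'' is automatic); the introduction of the separating space $\mathfrak{S}(\phi)$ and the closed graph theorem; the verification that $\mathfrak{S}(\phi)$ is a closed ideal of $B$, which indeed uses only surjectivity and joint continuity of the products; and the reduction of the whole theorem to the inclusion $\mathfrak{S}(\phi)\subseteq\{b\in B:\rho_m(b)=0\}$, which m-semisimplicity then converts into $\mathfrak{S}(\phi)=\{0\}$. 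All of this is correct.

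The genuine gap is the one step that carries the weight of the theorem, the one you introduce with ``should yield'': Sinclair's stability lemma cannot produce quasinilpotency of $L_b$ and $R_b$. Applied to the exactly intertwined pairs $(L_c,L_b)$ and $(R_c,R_b)$, the stability lemma gives that the chains $\overline{L_b^{\,n}\mathfrak{S}(\phi)}$, and more generally $\overline{(L_b-\lambda_1)\cdots(L_b-\lambda_n)\mathfrak{S}(\phi)}$, stabilize; from this one can extract finiteness of sets of ``critical eigenvalues'' and density statements about ranges, but never the bounded invertibility of $L_b-\lambda$ for every $\lambda\neq 0$, which is what $\rho_m(b)=0$ requires. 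Concretely, the conclusion of the stability lemma is perfectly compatible with $L_b$ restricting to a surjective isometry of $\mathfrak{S}(\phi)$ (every chain is then constant), and since $L_b(B)\subseteq\mathfrak{S}(\phi)$ the nonzero spectrum of $L_b$ on $B$ coincides with that of this restriction; so nothing in your argument excludes, say, $\rho_m(b)=1$. This is not a fixable slip of wording: even in the associative case, the corresponding fact --- that the separating ideal of an epimorphism lies in the radical, hence consists of elements of spectral radius zero --- is not obtained from the stability lemma, but from genuinely spectral input, namely Johnson's irreducible-representation argument or the subharmonicity of the spectral radius in the Vesentini--Aupetit--Ransford tradition. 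A correct proof of the present theorem must supply precisely that kind of input at the level of the associative algebra $L(B)$, with the intertwining identities standing in for the (false) multiplicativity of $b\mapsto L_b$; this is what the cited proof does. Your proposal has the right objects ($L_b$, $R_b$, the intertwining relations, the ideal $\mathfrak{S}(\phi)$) but omits the spectral engine, so as written it establishes continuity only modulo an unproved step --- and that step is the theorem.
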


Particularly, whenever $A$ is associative, we obtain as a corollary the well
known theorem of B. E. Johnson \cite{Johnson} (see also \cite{Bonsall,
Dales, Palmer}) that in words of T. Palmer is a "cornerstone of the Banach
algebra theory". Note that, in the above result, m-semisimple can be
replaced by spectrally semisimple or by semisimple (see \cite{Ma-Ve2}).

\section{The spectrum of an element in an evolution algebra}

In this section we characterize the spectrum and the m-spectrum of an
element in an evolution algebra. Moreover we study the notions of
semisimplicity established in the above section in the framework of
evolution algebras. Throughout this section all the algebras that we will
consider will be complex. This is not restrictive because the spectrum, as
well as the m-spectrum, of an element in a real algebra $A$ is defined as
the corresponding one in the complexified algebra $A_{\mathbb{C}}.$

Let $A$ be a finite dimensional evolution algebra and $B:=\{e_{1},...,e_{n}\}
$ a natural basis of $A.$ Let $M_{B}(A)=(\omega _{ij})\in M_{n\times n}(%
\mathbb{K})$ be the structure matrix of $A$ relative to $B.$ Then, it is
straightforward to check that for $a=\sum\limits_{i=1}^{n}\alpha _{i}e_{i}$
and $b=\sum\limits_{i=1}^{n}\beta _{i}e_{i}$ in $A$, we have 
\begin{eqnarray}
ab &=&\left( 
\begin{array}{ccc}
\omega _{11} & \cdots  & \omega _{1n} \\ 
\vdots  & \ddots  & \vdots  \\ 
\omega _{n1} & \cdots  & \omega _{nn}%
\end{array}%
\right) \left( 
\begin{array}{c}
\alpha _{1}\beta _{1} \\ 
\vdots  \\ 
\alpha _{n}\beta _{n}%
\end{array}%
\right) =  \label{producto} \\
&&\left( 
\begin{array}{ccc}
\omega _{11} & \cdots  & \omega _{1n} \\ 
\vdots  & \ddots  & \vdots  \\ 
\omega _{n1} & \cdots  & \omega _{nn}%
\end{array}%
\right) \left( 
\begin{array}{ccc}
\alpha _{1} & \cdots  & 0 \\ 
\vdots  & \ddots  & \vdots  \\ 
0 & \cdots  & \alpha _{n}%
\end{array}%
\right) \left( 
\begin{array}{c}
\beta _{1} \\ 
\vdots  \\ 
\beta _{n}%
\end{array}%
\right) .  \notag
\end{eqnarray}

The result can be easily adapted to the infinite dimensional case working
with the finite set of index defined by $\Lambda _{ab}$, i.e. the support of 
$ab.$ Note that if $\Lambda _{a}$ and $\Lambda _{b}$ denote as usual the
support of $a$ and $b$ (respectively) and if $\Lambda _{a}\cap \Lambda
_{b}\neq \emptyset $ then, $\Lambda _{ab}=D^{1}(\Lambda _{a}\cap \Lambda
_{b})$ (see Definition \ref{descen}).

We begin this section by determining the spectrum and the multiplicative
spectrum of an element in a non-zero trivial evolution algebra (in an
arbitrary algebra $A$ with zero product we have that $\sigma ^{A}(a)=\sigma
_{m}^{A}(a)=\{0\},$ for every $a\in A$). We recall that every structure
matrix of such an algebra is diagonal with non-zero entries (see Definition
2.8 and Remark \ref{ulti}). Moreover, by Proposition \ref{unidad}, a
non-zero trivial evolution algebra has a unit if, and only if, its dimension
is finite.

\begin{proposition}
\label{propoa}Let $A$ be a non-zero trivial evolution algebra, $%
B:=\{e_{i}:i\in \Lambda \}$ a natural basis$,$ and $M_{B}(A)=(\omega _{ij})$
the corresponding structure matrix$.$ Let $a\in A$ be such that $a=\sum
\alpha _{i}e_{i}.$ Then 
\begin{equation*}
\sigma ^{A}(a)=\sigma _{m}^{A}(a)=\{\alpha _{i}\omega _{ii}:i\in \Lambda \},
\end{equation*}%
and, 
\begin{equation*}
\sigma ^{A}(a)\backslash \{0\}=\sigma _{m}^{A}(a)\backslash \{0\}=\{\alpha
_{i}\omega _{ii}:i\in \Lambda _{a}\}.
\end{equation*}%
Moreover,

$\mathrm{(i)}$ If $\Lambda _{a}\neq \Lambda $ (this happens particularly
when $\dim A=\infty $) then $\sigma ^{A}(a)=\sigma _{m}^{A}(a)=\{\alpha
_{i}\omega _{ii}:i\in \Lambda _{a}\}\cup \{0\}.$

$\mathrm{(ii)}$ If $\Lambda _{a}=\Lambda $ $\ $(and hence $\dim A<\infty $)
then $\sigma ^{A}(a)=\sigma _{m}^{A}(a)=\{\alpha _{i}\omega _{ii}:i\in
\Lambda _{a}\}.$
\end{proposition}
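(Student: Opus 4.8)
The plan is to exploit the fact that in a trivial evolution algebra the product is \emph{coordinatewise}. First I would record that if $a=\sum_i \alpha_i e_i$ and $b=\sum_i\beta_i e_i$ then, since $e_ie_j=0$ for $i\neq j$ and $e_i^2=\omega_{ii}e_i$, one has $ab=\sum_i \omega_{ii}\alpha_i\beta_i e_i$. In particular the left (and, by commutativity, the right) multiplication operator $L_a=R_a$ is diagonal with respect to $B$, with diagonal entries $\{\alpha_i\omega_{ii}:i\in\Lambda\}$. This single observation reduces every invertibility question to a family of scalar conditions indexed by $\Lambda$.

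Next I would compute the two spectra away from $0$, using the characterizations valid whether or not $A$ has a unit. For the m-spectrum, formula (\ref{tres}) together with \cite[Proposition 2.2]{Ma-Ve3} gives $\sigma_m^A(a)\setminus\{0\}=\sigma^{\mathcal{L}(A)}(L_a)\cup\sigma^{\mathcal{L}(A)}(R_a)$; since $L_a-\lambda I$ is the diagonal operator with entries $\alpha_i\omega_{ii}-\lambda$, it fails to be bijective exactly when $\lambda=\alpha_i\omega_{ii}$ for some $i$, whence the nonzero part of this set is $\{\alpha_i\omega_{ii}:i\in\Lambda_a\}$ (note $\alpha_i\omega_{ii}\neq 0$ iff $\alpha_i\neq 0$ iff $i\in\Lambda_a$). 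For the ordinary spectrum I would use (\ref{quasi2}): for $\lambda\neq 0$, one has $\lambda\in\sigma^A(a)$ iff $a/\lambda\notin q-inv(A)$. Writing a candidate quasi-inverse $b=\sum_i\beta_i e_i$, the equation $(a/\lambda)+b-(a/\lambda)b=0$ separates into the scalar equations $\frac{\alpha_i}{\lambda}+\beta_i-\frac{\alpha_i\omega_{ii}}{\lambda}\beta_i=0$, which are solvable precisely when $\lambda\neq\alpha_i\omega_{ii}$ for every $i$. Thus again $\sigma^A(a)\setminus\{0\}=\{\alpha_i\omega_{ii}:i\in\Lambda_a\}$, which establishes the second displayed identity.

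The delicate point, and the one I would be most careful about, is the infinite-dimensional case, where $A$ has no unit: I must check that the element $b$ built coordinatewise genuinely lies in $A$, i.e. has finite support. This holds because $\beta_i=\frac{-\alpha_i}{\lambda-\alpha_i\omega_{ii}}$ vanishes whenever $\alpha_i=0$, so that $\Lambda_b\subseteq\Lambda_a$, which is finite; the same remark guarantees surjectivity of $L_a-\lambda I$ on $A$ by producing finitely supported preimages. This is exactly the place where the structure of the algebraic (finitely supported) direct sum is genuinely used.

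Finally I would decide whether $0$ belongs to each spectrum and assemble the full identity, invoking Proposition \ref{unidad}, by which $A$ has a unit iff $\dim A<\infty$. If $\Lambda_a=\Lambda$ then $A$ is finite-dimensional, $a$ is invertible and $L_a$ is bijective, so $0$ lies in neither spectrum; since no $\alpha_i$ is $0$ here, $\{\alpha_i\omega_{ii}:i\in\Lambda\}=\{\alpha_i\omega_{ii}:i\in\Lambda_a\}$ contains no $0$, which yields $\mathrm{(ii)}$. If $\Lambda_a\neq\Lambda$ (automatic when $\dim A=\infty$) I pick $j$ with $\alpha_j=0$: then $e_j\in\ker L_a$ forces $0\in\sigma_m^A(a)$, while $0\in\sigma^A(a)$ either because $A$ has no unit or because $a$ is not invertible in the finite-dimensional case; since $\alpha_j\omega_{jj}=0$, the index set $\{\alpha_i\omega_{ii}:i\in\Lambda\}$ equals $\{\alpha_i\omega_{ii}:i\in\Lambda_a\}\cup\{0\}$, which yields $\mathrm{(i)}$. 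Collecting the nonzero computation with this analysis of $0$ gives $\sigma^A(a)=\sigma_m^A(a)=\{\alpha_i\omega_{ii}:i\in\Lambda\}$ in every case.
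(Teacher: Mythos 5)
Your proposal is correct and follows essentially the same route as the paper's proof: both reduce each spectrum away from $0$ to coordinatewise scalar conditions, via (\ref{quasi2}) for $\sigma^{A}(a)$ and (\ref{tres}) together with the diagonal form of $L_{a}$ for $\sigma_{m}^{A}(a)$, and then settle whether $0$ belongs to each set by means of Proposition \ref{unidad} and the dichotomy $\Lambda_{a}=\Lambda$ versus $\Lambda_{a}\neq\Lambda$. Your explicit verification that the coordinatewise quasi-inverse and preimages have finite support (so they genuinely lie in $A$) is a point the paper leaves implicit, but it does not change the structure of the argument.
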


\begin{proof}
If $A$ is a non-zero trivial evolution algebra and $B:=\{e_{i}:i\in \Lambda
\}$ is a natural basis then, $e_{i}^{2}=\omega _{ii}e_{i}$ with $\omega
_{ii}\neq 0\,,$ for every $i\in \Lambda .$ Let $a=\sum \alpha _{i}e_{i}\in A.
$ We have that $\frac{a}{\lambda }$ $\in q-inv(A)$ for $\lambda \neq 0$ if
and only if there exists $b\in A$ such that $a+\lambda b-ab=0.$ If $b=\sum
\beta _{i}e_{i}$ \ this means that $\sum \alpha _{i}e_{i}+\lambda \sum \beta
_{i}e_{i}-\sum \alpha _{i}\beta _{i}\omega _{ii}e_{i}=0,$ so that $\alpha
_{i}+(\lambda -\alpha _{i}\omega _{ii})\beta _{i}=0,$ for very $i\in \Lambda
.$ Consequently $\frac{a}{\lambda }$ $\notin q-inv(A)$ if, and only if, $%
\lambda -\alpha _{i}\omega _{ii}=0$ and $\alpha _{i}\neq 0.$ Since by (\ref%
{quasi2}) 
\begin{equation*}
\sigma ^{A}(a)\backslash \{0\}=\{\lambda \in \mathbb{C}\backslash \{0\}:%
\frac{a}{\lambda }\notin q-inv(A)\},
\end{equation*}%
we obtain that $\ \sigma ^{A}(a)\backslash \{0\}=\{\alpha _{i}\omega
_{ii}:i\in \Lambda _{a}\}.$ Moreover, by Proposition \ref{unidad}, the
non-zero trivial evolution algebra $A$ has a unit if, and only if, $\dim
A<\infty ,$ and in this case $a\in inv(A)$ (that is $0\notin \sigma ^{A}(a)$%
) if and only if $\Lambda _{a}=\Lambda .$ Thus $0\in \sigma ^{A}(a)$ if, and
only if, $\Lambda _{a}\neq \Lambda .\,\ $

With respect to the $m-$invertibility, by (\ref{tres}), we have $\sigma
_{m}^{A}(a)\backslash \{0\}=\sigma ^{\mathcal{L}(A)}(L_{a}).$ For $\lambda
\neq 0,$ note that $L_{a}-\lambda I$ $\ $\ is injective (respectively
surjective) if, and only if, $\alpha _{i}\omega _{ii}-\lambda \neq 0,$ for
every $i\in \Lambda _{a}.$ Therefore $\sigma _{m}^{A}(a)\backslash
\{0\}=\{\alpha _{i}\omega _{ii}:i\in \Lambda _{a}\}.$ Moreover, if $A$ has
no unit then, $0\in \sigma _{m}^{A}(a)$ as showed in \cite[Proposition 2.2]%
{Ma-Ve3}. If $A$ has a unit then $\dim A<\infty ,$ so that $L_{a}$ is
bijective if, and only if, $L_{a\text{ }}$ is injective which happens if,
and only if, $\Lambda _{a}=\Lambda .$ Thus $0\in \sigma _{m}^{A}(a)$ if, and
only if, $\Lambda _{a}\neq \Lambda ,$ and the result follows.
\end{proof}

\begin{corollary}
\label{unodime}One-dimensional evolution algebras with non-zero product are
m-semisimple (and hence spectrally semisimple and semisimple).
\end{corollary}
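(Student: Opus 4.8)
The plan is to recognize that a one-dimensional evolution algebra with non-zero product is nothing but a (one-dimensional, hence finite-dimensional) non-zero trivial evolution algebra, and then to read off all three semisimplicity properties. Concretely, if $B:=\{e_{1}\}$ is a natural basis then $e_{1}^{2}=\omega _{11}e_{1}$ for some $\omega _{11}\in \mathbb{C}$, and the hypothesis that the product is non-zero forces $\omega _{11}\neq 0$. Thus the structure matrix $(\omega _{11})$ is a $1\times 1$ diagonal matrix with non-zero diagonal entry, which is precisely the defining condition of a non-zero trivial evolution algebra (and, by Proposition \ref{unidad}, $A$ even has the unit $e=\frac{1}{\omega _{11}}e_{1}$, so that $A\cong \mathbb{C}$ as a unital algebra).

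Given this, I would first establish the strongest of the three properties. By Corollary \ref{nuevo}$\mathrm{(i)}$ a non-zero trivial evolution algebra is semisimple, so $A$ is semisimple. The remaining two properties then follow at once from the chain of implications in Proposition \ref{rela}, namely semisimple $\Rightarrow $ spectrally semisimple $\Rightarrow $ m-semisimple.

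Alternatively---and this is the route that makes the three notions visibly coincide in this degenerate setting---I would compute the spectra directly from Proposition \ref{propoa}. For $a=\alpha _{1}e_{1}$ with $\alpha _{1}\neq 0$ one has $\Lambda _{a}=\Lambda =\{1\}$, so that case $\mathrm{(ii)}$ of that proposition gives $\sigma ^{A}(a)=\sigma _{m}^{A}(a)=\{\alpha _{1}\omega _{11}\}$; since $\alpha _{1}\omega _{11}\neq 0$ this yields $\rho (a)=\rho _{m}(a)=\left\vert \alpha _{1}\omega _{11}\right\vert >0$. Hence every non-zero element of $A$ has strictly positive (m-)spectral radius, so $\{a\in A:\rho _{m}(a)=0\}=\{0\}$, and likewise for $\rho $. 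Because a one-dimensional algebra possesses only the ideals $\{0\}$ and $A$, the unique ideal contained in either of these sets is $\{0\}$, which gives m-semisimplicity and spectral semisimplicity directly, while semisimplicity is supplied by Corollary \ref{nuevo}$\mathrm{(i)}$ as above.

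I expect no genuine obstacle here: the result is a direct specialization of the general machinery already developed. The only point deserving a moment of care is the logical direction behind the phrase ``(and hence spectrally semisimple and semisimple)''. This should not be read as deducing semisimplicity \emph{from} m-semisimplicity, which is false in general by Proposition \ref{rela}; rather, semisimplicity is obtained independently from the trivial structure, and all three notions simply hold simultaneously---indeed they coincide for a one-dimensional algebra, where the only proper ideal is $\{0\}$.
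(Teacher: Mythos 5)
Your proposal is correct, and it actually contains the paper's own proof as your ``alternative'' route: the paper argues in one line that for $A=\mathbb{K}e$ with $e^{2}=\omega e$, $\omega \neq 0$, one has $\sigma ^{A}(e)=\sigma _{m}^{A}(e)=\{\omega \}$, whence the only candidate non-zero ideal, $A$ itself, cannot sit inside $\{a\in A:\rho _{m}(a)=0\}$ (nor inside $\{a\in A:\rho (a)=0\}$), and the result follows. Your primary route is genuinely different: you first identify $A$ as a finite-dimensional non-zero trivial evolution algebra, obtain semisimplicity from Corollary \ref{nuevo}$\mathrm{(i)}$, and then descend through the chain of Proposition \ref{rela} (semisimple $\Rightarrow $ spectrally semisimple $\Rightarrow $ m-semisimple). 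That route has the merit of establishing the strongest property first and deducing the others in the correct logical direction, which also defuses the delicate ``and hence'' wording of the statement; the paper's route has the merit of being self-contained in the spectral section and of exhibiting the spectra explicitly. One refinement of your closing remark: the cleanest reason the three notions literally coincide here is not merely that the only proper ideal is $\{0\}$, but that every one-dimensional algebra is associative, so the final clause of Proposition \ref{rela} applies and makes the statement's ``and hence'' valid as written.
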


\begin{proof}
If $A=\mathbb{K}e$ with $e^{2}=\omega e$ for $\omega \neq 0$ then, we have
that $\sigma ^{A}(e)=\sigma _{m}^{A}(e)=\{\omega \}$ and the result follows.
\end{proof}

For a non-trivial evolution algebra $A,$ we have the following
characterization of the m-spectrum, $\sigma _{m}^{A}(a),$ and the spectrum, $%
\sigma ^{A}(a),$ of an element $a\in A.$

\begin{proposition}
\label{propob}Let $A$ be a finite-dimensional non-trivial evolution algebra
and $B:=\{e_{1},...,e_{n}\}$ a natural basis. If $\lambda \in \mathbb{C},$
and if $\ a=\sum_{i=1}^{n}\alpha _{i}e_{i}$ then,

$\mathrm{(i)}$ $\lambda \in \sigma _{m}^{A}(a)$ if, and only if, $\lambda =0$
or $\lambda $ is an eigenvalue of the matrix 
\begin{equation*}
\left( 
\begin{array}{ccc}
\omega _{11} & \cdots & \omega _{1n} \\ 
\vdots & \ddots & \vdots \\ 
\omega _{n1} & \cdots & \omega _{nn}%
\end{array}%
\right) \left( 
\begin{array}{ccc}
\alpha _{1} & \cdots & 0 \\ 
\vdots & \ddots & \vdots \\ 
0 & \cdots & \alpha _{n}%
\end{array}%
\right) .
\end{equation*}

$\mathrm{(ii)}$ $\lambda \in \sigma ^{A}(a)$ if, and only if, $\lambda =0$
or the equation 
\begin{equation*}
\left( \left( 
\begin{array}{ccc}
\omega _{11} & \cdots & \omega _{1n} \\ 
\vdots & \ddots & \vdots \\ 
\omega _{n1} & \cdots & \omega _{nn}%
\end{array}%
\right) \left( 
\begin{array}{ccc}
\alpha _{1} & \cdots & 0 \\ 
\vdots & \ddots & \vdots \\ 
0 & \cdots & \alpha _{n}%
\end{array}%
\right) -\left( 
\begin{array}{ccc}
\lambda & \cdots & 0 \\ 
\vdots & \ddots & \vdots \\ 
0 & \cdots & \lambda%
\end{array}%
\right) \right) \left( 
\begin{array}{c}
\beta _{1} \\ 
\vdots \\ 
\beta _{n}%
\end{array}%
\right) =\left( 
\begin{array}{c}
\alpha _{1} \\ 
\vdots \\ 
\alpha _{n}%
\end{array}%
\right)
\end{equation*}%
$%
%TCIMACRO{\TeXButton{TeX field}{\normalsize}}%
%BeginExpansion
\normalsize%
%EndExpansion
$has no solution for $\lambda $ (in which case $\lambda \in \sigma
_{m}^{A}(a)$).
\end{proposition}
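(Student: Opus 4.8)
The proof hinges on translating the abstract definitions of $\sigma_m^A(a)$ and $\sigma^A(a)$ into concrete linear-algebra conditions via the product formula (\ref{producto}). Recall that for $a=\sum_{i=1}^n\alpha_i e_i$, formula (\ref{producto}) gives $ab = W D_\alpha \beta$ in coordinates, where $W=(\omega_{ij})$ is the structure matrix, $D_\alpha=\mathrm{diag}(\alpha_1,\dots,\alpha_n)$, and $\beta$ is the coordinate vector of $b$. The crucial observation is that this realizes the left-multiplication operator $L_a$ as the matrix $WD_\alpha$ acting on coordinates. Since $A$ is commutative, $L_a=R_a$, so $\sigma_m^A(a)$ reduces to the spectrum of the single operator $L_a$.

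\textbf{Proof of (i).}
The plan is to apply formula (\ref{tres}). Because $A$ is non-trivial and finite-dimensional, it has no unit (by Proposition \ref{unidad}, since a non-trivial algebra is not a non-zero trivial evolution algebra), so (\ref{tres}) applies and gives
$$\sigma_m^A(a)=\sigma^{\mathcal{L}(A)}(L_a)\cup\{0\}.$$
By commutativity $R_a=L_a$, so only the one operator appears. Now I would identify $L_a$ with its matrix $WD_\alpha$ relative to $B$: by (\ref{producto}), $L_a(b)$ has coordinate vector $WD_\alpha\beta$. Hence $\sigma^{\mathcal{L}(A)}(L_a)$ is exactly the set of eigenvalues of $WD_\alpha$, because $\mathcal{L}(A)$ is an associative algebra of operators on a finite-dimensional space and spectrum there coincides with the classical eigenvalue set. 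Therefore $\lambda\in\sigma_m^A(a)$ iff $\lambda=0$ or $\lambda$ is an eigenvalue of $WD_\alpha$, which is the stated matrix product. This part is essentially routine once the identification $L_a\leftrightarrow WD_\alpha$ is in place.

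\textbf{Proof of (ii).}
For the ordinary spectrum I would use the quasi-invertibility characterization (\ref{quasi2}). Since $A$ has no unit, $0\in\sigma^A(a)$ automatically, matching the ``$\lambda=0$'' alternative. For $\lambda\neq 0$, (\ref{quasi2}) says $\lambda\in\sigma^A(a)$ iff $\tfrac{a}{\lambda}\notin q\text{-}inv(A)$, i.e. iff there is no $b\in A$ with $\tfrac{a}{\lambda}+b-\tfrac{a}{\lambda}b=0$. Writing $b$ in coordinates $\beta$ and using (\ref{producto}) to express $\tfrac{a}{\lambda}\,b=\tfrac{1}{\lambda}WD_\alpha\beta$, the quasi-inverse equation becomes, after multiplying through by $\lambda$,
$$(WD_\alpha-\lambda I)\beta = \alpha,$$
where $\alpha=(\alpha_1,\dots,\alpha_n)^t$. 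Thus $\lambda\notin\sigma^A(a)$ (for $\lambda\neq0$) precisely when this linear system is solvable in $\beta$, which is the displayed equation in the statement. The parenthetical remark that $\lambda\in\sigma_m^A(a)$ whenever the system fails follows immediately from the inclusion $\sigma^A(a)\subseteq\sigma_m^A(a)$ noted earlier, or directly: unsolvability means $WD_\alpha-\lambda I$ is not surjective, hence not bijective, so $\lambda\in\sigma_m^A(a)$ by part (i).

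\textbf{Expected main obstacle.}
The only genuinely delicate point is the bookkeeping in part (ii): one must correctly set up the quasi-inverse relation $\tfrac{a}{\lambda}+b-\tfrac{a}{\lambda}b=0$, substitute the coordinate form of the product, and clear the factor $1/\lambda$ to arrive at the clean system $(WD_\alpha-\lambda I)\beta=\alpha$ rather than a scaled variant. Care is needed to check that the right-hand side is $\alpha$ (not $\lambda\alpha$ or $\alpha/\lambda$) and that the case $\lambda=0$ is handled separately via the absence of a unit. Everything else is a direct consequence of the identification $L_a\leftrightarrow WD_\alpha$ together with the already-established relations (\ref{quasi2}) and (\ref{tres}).
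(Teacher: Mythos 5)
Your proposal is correct and follows essentially the same route as the paper: both use the absence of a unit (Corollary \ref{anterior}) together with (\ref{tres}) and the identification of $L_{a}$ with $M_{B}(A)\,\mathrm{diag}(\alpha_{1},\dots,\alpha_{n})$ via (\ref{producto}) for part $\mathrm{(i)}$, and the quasi-invertibility characterization (\ref{quasi})--(\ref{quasi2}) reduced to the linear system $(M_{B}(A)D_{\alpha}-\lambda I)\beta=\alpha$ for part $\mathrm{(ii)}$. The only cosmetic difference is that you invoke the eigenvalue description of the spectrum in $\mathcal{L}(A)$ directly, where the paper phrases the same fact as ``bijective iff injective'' in finite dimensions.
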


\begin{proof}
By Corollary \ref{anterior} we have that $A$ has not a unit, and by (\ref%
{tres}),%
\begin{equation*}
\sigma _{m}^{A}(a):=\{0\}\cup \sigma ^{\mathcal{L}(A)}(L_{a}).
\end{equation*}%
Since $(L_{a}-\lambda I)$ is bijective if, and only if, it is injective
(because $A$ is finite-dimensional)\ we have from (\ref{producto}) that $%
\lambda \in \sigma ^{\mathcal{L}(A)}(L_{a})$ if, and only if, the equation 
\begin{equation*}
\left( \left( 
\begin{array}{ccc}
\omega _{11} & \cdots & 0 \\ 
\vdots & \ddots & \vdots \\ 
0 & \cdots & \omega _{nn}%
\end{array}%
\right) \left( 
\begin{array}{ccc}
\alpha _{1} & \cdots & 0 \\ 
\vdots & \ddots & \vdots \\ 
0 & \cdots & \alpha _{n}%
\end{array}%
\right) -\left( 
\begin{array}{ccc}
\lambda & \cdots & 0 \\ 
\vdots & \ddots & \vdots \\ 
0 & \cdots & \lambda%
\end{array}%
\right) \right) \left( 
\begin{array}{c}
\beta _{1} \\ 
\vdots \\ 
\beta _{n}%
\end{array}%
\right) =\left( 
\begin{array}{c}
0 \\ 
\vdots \\ 
0%
\end{array}%
\right)
\end{equation*}%
has a non-zero solution $b=\sum_{i=1}^{n}\beta _{i}e_{i}$ which proves $%
\mathrm{(i)}.$ Similarly, by (\ref{quasi}) we have that 
\begin{equation*}
\sigma ^{A}(a):=\{0\}\cup \{\lambda \in \mathbb{C}\backslash \{0\}:\frac{a}{%
\lambda }\notin q-inv(A)\}.
\end{equation*}%
Take $\lambda \in \mathbb{C}\backslash \{0\}.$ Since $\frac{a}{\lambda }\in
q-inv(A)$ if, and only if, there exists $b\in A$ such that $\frac{a}{\lambda 
}+b-\frac{a}{\lambda }b=0,$ or equivalently $ab-\lambda b=a,$ we obtain from
(\ref{producto}) that $\frac{a}{\lambda }\notin q-inv(A)$ if, and only if,
the equation\ 
\begin{equation*}
\left( \left( 
\begin{array}{ccc}
\omega _{11} & \cdots & \omega _{1n} \\ 
\vdots & \ddots & \vdots \\ 
\omega _{n1} & \cdots & \omega _{nn}%
\end{array}%
\right) \left( 
\begin{array}{ccc}
\alpha _{1} & \cdots & 0 \\ 
\vdots & \ddots & \vdots \\ 
0 & \cdots & \alpha _{n}%
\end{array}%
\right) -\left( 
\begin{array}{ccc}
\lambda & \cdots & 0 \\ 
\vdots & \ddots & \vdots \\ 
0 & \cdots & \lambda%
\end{array}%
\right) \right) \left( 
\begin{array}{c}
\beta _{1} \\ 
\vdots \\ 
\beta _{n}%
\end{array}%
\right) =\left( 
\begin{array}{c}
\alpha _{1} \\ 
\vdots \\ 
\alpha _{n}%
\end{array}%
\right)
\end{equation*}%
has no solution $b=\sum_{i=1}^{n}\beta _{i}e_{i}$ (which also implies that $%
\lambda \in \sigma ^{\mathcal{L}(A)}(L_{a})$). This proves $\mathrm{(ii)}.$
\end{proof}

\begin{example}
\label{exafinal}\emph{The evolution algebra }$A$\emph{\ given in Example \ref%
{estrict}, whose structure matrix with respect to }$B:=\{e_{1},e_{2}\}$\emph{%
\ is }%
\begin{equation*}
\left( 
\begin{array}{rr}
-\frac{1}{2}\smallskip \vspace{0.09in} & \text{ }\frac{3}{4} \\ 
-\frac{1}{3} & \text{ }\frac{1}{2}%
\end{array}%
\right) ,
\end{equation*}%
\emph{is such that }$e_{1}^{2}=-\frac{3}{2}e_{2}^{2},$\emph{\ so that the
ideal generated by }$e_{1}^{2}$\emph{\ is }$\left\langle
e_{1}^{2}\right\rangle =\mathbb{K}e_{1}^{2}$\emph{\ .\thinspace\ Similarly,
if }$a$\emph{\ }$\in A\backslash \{0\}$\emph{\ is not \ multiple of }$%
e_{1}^{2}$\emph{\ then }$\left\langle a\right\rangle =A$\emph{\ (because }$%
\dim \left\langle a\right\rangle =2$\emph{) and hence the unique non-zero
proper ideal of }$A$\emph{\ is }$\mathbb{K}e_{1}^{2}$\emph{\ . Since the
eigenvalues of }%
\begin{equation*}
\left( 
\begin{array}{rr}
-\frac{1}{2}\smallskip \vspace{0.09in} & \frac{3}{4} \\ 
-\frac{1}{3} & \frac{1}{2}%
\end{array}%
\right) \left( 
\begin{array}{rr}
-\frac{1}{2}\smallskip \vspace{0.09in} & 0 \\ 
0 & -\frac{1}{3}%
\end{array}%
\right) =\left( 
\begin{array}{cc}
\frac{1}{4}\smallskip \vspace{0.09in}\text{ } & -\frac{1}{4} \\ 
\frac{1}{6} & -\frac{1}{6}%
\end{array}%
\right) 
\end{equation*}%
\emph{are }$0$\emph{\ and }$\frac{1}{12},$\emph{\ }$\ $\emph{we conclude
that }$\sigma _{m}^{A}(e_{1}^{2})=\{0,\frac{1}{12}\}.$\emph{\ On the other
hand, }%
\begin{equation*}
\left[ \left( 
\begin{array}{cc}
\frac{1}{4}\smallskip \vspace{0.09in}\text{ } & -\frac{1}{4} \\ 
\frac{1}{6} & -\frac{1}{6}%
\end{array}%
\right) -\left( 
\begin{array}{cc}
\frac{1}{12}\smallskip \vspace{0.09in}\text{ } & 0 \\ 
0 & \frac{1}{12}%
\end{array}%
\right) \right] \left( 
\begin{array}{c}
\beta _{1}\smallskip  \\ 
\beta _{2}%
\end{array}%
\right) =\left( 
\begin{array}{c}
-\frac{1}{2}\smallskip \vspace{0.09in} \\ 
-\frac{1}{3}%
\end{array}%
\right) 
\end{equation*}%
\emph{has no solution as}%
\begin{equation*}
\left( 
\begin{array}{c}
\frac{1}{6}\beta _{1}\smallskip \text{ }\smallskip \vspace{0.09in}-\frac{1}{4%
}\beta _{2} \\ 
\frac{1}{6}\beta _{1}-\frac{1}{4}\beta _{2}%
\end{array}%
\right) \neq \left( 
\begin{array}{c}
-\frac{1}{2}\smallskip \vspace{0.09in} \\ 
-\frac{1}{3}%
\end{array}%
\right) ,
\end{equation*}%
\emph{so that, from the above theorem,}%
\begin{equation*}
\sigma ^{A}(e_{1}^{2})=\sigma _{m}^{A}(e_{1}^{2})=\{0,\frac{1}{12}\}.
\end{equation*}%
\emph{It follows that }$\{0\}$\emph{\ is the unique ideal contained in }$%
\{a\in A:\rho _{m}(a)=0\},$\emph{\ (and therefore also in }$\{a\in A:\rho
(a)=0\}$\emph{)}$,$\emph{\ so that }$A$\emph{\ is m-semisimple (and
spectrally semisimple). Nevertheless }$A$\emph{\ is a radical algebra
because }$A$\emph{\ has no modular ideals as it can be deduced from
Corollary \ref{Jradical}.}$\ $\emph{Actually }$e_{1}-e_{1}u\notin
\left\langle e_{1}^{2}\right\rangle =\mathbb{K}e_{1}^{2}$\emph{\ }\ \emph{%
for any} $u\in A.$
\end{example}

Next we determine the spectrum and the m-spectrum of an element in an
evolution algebra of arbitrary dimension.

\begin{proposition}
\label{propoc}Let $A$ be a non trivial evolution algebra, and $%
B:=\{e_{i}:i\in \Lambda \}$ a natural basis. For $a=\dsum\limits_{i\in
\Lambda _{a}}\alpha _{i}e_{i}\in A$, define%
\begin{equation*}
\begin{array}{l}
B^{0}(a):=\{e_{i}:i\in \Lambda _{a}\}. \\ 
B^{1}(a):=\cup _{i\in \Lambda _{a}}\{e_{j}:\omega _{ji}\neq 0\}.%
\end{array}%
\end{equation*}

If $B^{0}(a)=\{e_{1},\cdots ,e_{k}\}$ and $B^{0}(a)\cup
B^{1}(a)=\{e_{1},\cdots ,e_{k},e_{k+1},\cdots ,e_{m}\}$ then,

$\mathrm{(i)}$ $\lambda \in \sigma _{m}^{A}(a)$ if, and only if, $\lambda =0$
or $\lambda $ is an eigenvalue of the matrix 
\begin{equation*}
\left( 
\begin{array}{ccc}
\omega _{11} & \cdots & \omega _{1m} \\ 
\vdots & \ddots & \vdots \\ 
\omega _{m1} & \cdots & \omega _{mm}%
\end{array}%
\right) \left( 
\begin{array}{ccc}
\alpha _{1} & \cdots & 0 \\ 
\vdots & \ddots & \vdots \\ 
0 & \cdots & \alpha _{m}%
\end{array}%
\right) ,
\end{equation*}%
where \ $\alpha _{k+1}=\cdots =\alpha _{m}=0.$

$\mathrm{(ii)}$ $\lambda \in \sigma ^{A}(a)$ if, and only if, $\lambda =0$
or the equation 
\begin{equation*}
%TCIMACRO{\TeXButton{TeX field}{\tiny}}%
%BeginExpansion
\tiny%
%EndExpansion
\left( \left( 
\begin{array}{ccc}
\omega _{11} & \cdots & \omega _{1m} \\ 
\vdots & \ddots & \vdots \\ 
\omega _{m1} & \cdots & \omega _{mm}%
\end{array}%
\right) \left( 
\begin{array}{ccc}
\alpha _{1} & \cdots & 0 \\ 
\vdots & \ddots & \vdots \\ 
0 & \cdots & \alpha _{m}%
\end{array}%
\right) -\left( 
\begin{array}{ccc}
\lambda & \cdots & 0 \\ 
\vdots & \ddots & \vdots \\ 
0 & \cdots & \lambda%
\end{array}%
\right) \right) \left( 
\begin{array}{c}
\beta _{1} \\ 
\vdots \\ 
\beta _{m}%
\end{array}%
\right) {\tiny =}\left( 
\begin{array}{c}
\gamma _{1} \\ 
\vdots \\ 
\gamma _{m}%
\end{array}%
\right) {\tiny ,}
\end{equation*}%
$%
%TCIMACRO{\TeXButton{TeX field}{\normalsize}}%
%BeginExpansion
\normalsize%
%EndExpansion
$where $\alpha _{k+1}=\cdots =\alpha _{m}=0,$ has no solution for $\lambda $
(in which case $\lambda \in \sigma _{m}^{A}(a)$).
\end{proposition}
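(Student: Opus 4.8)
The plan is to realise $L_a$ as a block--diagonal operator with respect to a splitting of $A$ into a finite--dimensional invariant subspace and its complement, thereby collapsing the infinite--dimensional problem to the finite matrix computation already carried out in Proposition \ref{propob}. First I would record the two reductions available from Section \ref{espec}. Since $A$ is non--trivial it has no unit (Corollary \ref{unifinal}); hence, using commutativity so that $L_a=R_a$, (\ref{tres}) gives $\sigma_m^A(a)=\{0\}\cup\sigma^{\mathcal{L}(A)}(L_a)$, while (\ref{quasi}) gives $\sigma^A(a)=\{0\}\cup\{\lambda\in\mathbb{C}\setminus\{0\}:\frac{a}{\lambda}\notin q-inv(A)\}$. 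In both cases only the behaviour of $L_a-\lambda I$ for $\lambda\neq 0$ remains to be understood.

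The structural heart of the argument is the following. Write $\{e_1,\dots,e_m\}=B^0(a)\cup B^1(a)$ with $\{e_1,\dots,e_k\}=B^0(a)$, and set $V_1:=lin\{e_1,\dots,e_m\}$ and $V_2:=lin\{e_j:j\in\Lambda\setminus\{1,\dots,m\}\}$, so that $A=V_1\oplus V_2$. Here $V_1$ is finite--dimensional because $\Lambda_a$ is finite and each column of the structure matrix has finitely many non--zero entries, so the index set of $B^1(a)$, namely $D^1(\Lambda_a)$, is finite. From $L_a(e_j)=\alpha_j e_j^2$ one reads off that $L_a(e_j)=0$ whenever $j\notin\Lambda_a$, hence $L_a(V_2)=\{0\}$; and that for $j\in\Lambda_a$ the vector $e_j^2$ is supported on $D^1(j)\subseteq D^1(\Lambda_a)$, hence $L_a(V_1)\subseteq V_1$. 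Consequently $L_a$ is block diagonal, and the block $T:=L_a|_{V_1}$ has matrix $WD_\alpha$ in the basis $\{e_1,\dots,e_m\}$, where $W=(\omega_{ij})_{i,j=1}^m$ and $D_\alpha=\mathrm{diag}(\alpha_1,\dots,\alpha_m)$ with $\alpha_{k+1}=\cdots=\alpha_m=0$; this is precisely the matrix in the statement.

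For $\mathrm{(i)}$, for $\lambda\neq 0$ the operator $L_a-\lambda I$ splits as $(T-\lambda I_{V_1})\oplus(-\lambda I_{V_2})$; since $-\lambda I_{V_2}$ is bijective, $L_a-\lambda I$ is bijective if and only if $T-\lambda I_{V_1}$ is bijective on the finite--dimensional space $V_1$, that is, if and only if $\lambda$ is not an eigenvalue of $WD_\alpha$. Thus $\sigma^{\mathcal{L}(A)}(L_a)\setminus\{0\}$ consists of the non--zero eigenvalues of $WD_\alpha$, and adjoining $0$ yields $\mathrm{(i)}$. For $\mathrm{(ii)}$, clearing denominators shows that $\frac{a}{\lambda}\in q-inv(A)$ is equivalent to the solvability in $A$ of $(L_a-\lambda I)b=a$. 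Since $a\in V_1$, decomposing $b=b_1+b_2$ along $V_1\oplus V_2$ forces $b_2=0$ (as $\lambda\neq 0$) and reduces the equation to $(T-\lambda I_{V_1})b_1=a$, i.e. to the finite linear system $(WD_\alpha-\lambda I)\hat\beta=\hat\gamma$ whose right--hand side $\hat\gamma$ is the coordinate vector of $a$ in $\{e_1,\dots,e_m\}$ (so $\gamma_i=\alpha_i$). Hence $\lambda\in\sigma^A(a)\setminus\{0\}$ exactly when this system has no solution $\hat\beta$; in that event $T-\lambda I_{V_1}$ is not surjective, hence not injective on $V_1$, so $\lambda$ is an eigenvalue of $WD_\alpha$ and $\lambda\in\sigma_m^A(a)$, as the statement records.

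The one step requiring care is the block--diagonalisation: one must verify both that $L_a$ annihilates $V_2$ and that it preserves $V_1$, which hinges on identifying the index set of $B^1(a)$ with the first--generation descendants $D^1(\Lambda_a)$ and on the evolution--algebra identity $L_a(e_j)=\alpha_j e_j^2$. Once this is established, the complement $V_2$ contributes only the eigenvalue $0$ (consistent with $0\in\sigma_m^A(a)$ forced by the absence of a unit), and everything else is the finite--dimensional computation of Proposition \ref{propob} performed on the enlarged index set $\{1,\dots,m\}$, the extra indices in $B^1(a)\setminus B^0(a)$ accounting for the coordinates into which $L_a$ may spread.
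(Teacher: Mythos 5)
Your proposal is correct and follows essentially the same route as the paper: the paper's proof also splits $A$ as $A_{0}\oplus lin(B\backslash\{e_{1},\dots ,e_{m}\})$ with $A_{0}=lin\{e_{1},\dots ,e_{m}\}$ (your $V_{1}\oplus V_{2}$), observes that $ab=ab_{0}\in A_{0}$ so that $(L_{a}-\lambda I)b=c$ decouples into a finite-dimensional equation on $A_{0}$ plus $-\lambda b_{1}=c_{1}$ on the complement, and then reads off (i) and (ii) from the matrix $W D_{\alpha}$ exactly as you do. Your explicit block-diagonal phrasing $L_{a}=T\oplus 0$ and the remark that the index set of $B^{1}(a)$ is $D^{1}(\Lambda_{a})$ (hence finite, by finiteness of column supports) only make explicit what the paper leaves implicit; there is no substantive difference.
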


\begin{proof}
Since $A$ is a non trivial evolution algebra, by Proposition \ref{unidad},
we have that $A$ has not a unit and hence $0\in \sigma ^{A}(a)\subseteq
\sigma _{m}^{A}(a).$ On the other hand, let $A_{0}:=lin\{e_{1},\cdots
,e_{k},e_{k+1},\cdots ,e_{m}\}.$ If $c\in A$ then, there exists a unique $%
c_{0}\in A_{0}$ and $c_{1}\in lin(B\backslash \{e_{1},\cdots
,e_{k},e_{k+1},\cdots ,e_{m}\})$ such that $c=c_{0}+c_{1}.$ We claim that
for $\lambda \in \mathbb{C}\backslash \{0\}$ and $c\in A,$ the equation $%
(L_{a}-\lambda I)b=c$ has a unique solution, $b\in A$ if, and only if, the
equation $(L_{a}-\lambda I)b_{0}=c_{0\text{ }}$ has a unique solution $\
b_{0}\in A_{0}.$ In fact, if $b=b_{0}+b_{1}$ and $c=c_{0}+c_{1}$ then $%
ab=ab_{0}\in A_{0}$ so that $ab-\lambda b=c$ if, and only if, $%
ab_{0}-\lambda b_{0}=c_{0}$ and \ $-\lambda b_{1}=c_{1}.$ Consequently, the
claim is proved because the necessary condition is obvious and, conversely,
given \ $(L_{a}-\lambda I)b=c,$ if $b_{0}\in A$ is the unique solution of $%
(L_{a}-\lambda I)b_{0}=c_{0\text{ }}$then $b=b_{0}+b_{1}$ with $b_{1}=-\frac{%
1}{\lambda }c_{1}$ is the unique solution of\ $(L_{a}-\lambda I)b=c.$ The
result follows directly from this fact. Indeed, if $b_{0}=\sum_{i=1}^{k}%
\beta _{i}e_{i}+\sum_{i=k+1}^{m}\beta _{i}e_{i}\in A_{0}$ then, 
\begin{equation*}
ab=\sum_{i=1}^{k}\alpha _{i}\beta _{i}e_{i}^{2}=\sum_{i=1}^{m}\eta _{i}e_{i},
\end{equation*}%
where 
\begin{equation*}
\left( 
\begin{array}{l}
\eta _{1} \\ 
\vdots \\ 
\vdots \\ 
\vdots \\ 
\eta _{m}%
\end{array}%
\right) =\left( 
\begin{array}{llll}
\omega _{11} & \omega _{12} & \cdots & \omega _{1k} \\ 
\vdots & \vdots &  &  \\ 
\omega _{k1} & \omega _{k2} & \cdots & \omega _{kk} \\ 
\vdots & \vdots &  &  \\ 
\omega _{m1} & \omega _{m2} & \cdots & \omega _{mk}%
\end{array}%
\right) \left( 
\begin{array}{l}
\alpha _{1}\beta _{1} \\ 
\vdots \\ 
\vdots \\ 
\vdots \\ 
\alpha _{k}\beta _{k}%
\end{array}%
\right) .
\end{equation*}%
$%
%TCIMACRO{\TeXButton{TeX field}{\noindent}}%
%BeginExpansion
\noindent%
%EndExpansion
$Therefore $(L_{a}-\lambda I)$ is bijective if and only if the equation 
\begin{equation}
%TCIMACRO{\TeXButton{TeX field}{\tiny}}%
%BeginExpansion
\tiny%
%EndExpansion
\left( 
\begin{tabular}{llllll}
$\omega _{11}$ & $\cdots $ & $\omega _{1k}$ & $\cdots $ & $\cdots $ & $%
\omega _{1m}$ \\ 
$\vdots $ & $\ddots $ & $\vdots $ &  &  & $\vdots $ \\ 
$\omega _{k1}$ & $\cdots $ & $\omega _{kk}$ &  &  & $\vdots $ \\ 
$\vdots $ &  &  & $\ddots $ &  & $\vdots $ \\ 
$\vdots $ &  &  &  & $\ddots $ & $\vdots $ \\ 
$\omega _{m1}$ & $\cdots $ & $\cdots $ & $\cdots $ & $\cdots $ & $\omega
_{mm}$%
\end{tabular}%
\right) \left( 
\begin{tabular}{l}
$\alpha _{1}\beta _{1}$ \\ 
$\vdots $ \\ 
$\alpha _{k}\beta _{k}$ \\ 
$0$ \\ 
$\vdots $ \\ 
$0$%
\end{tabular}%
\right) {\tiny -}\left( 
\begin{tabular}{l}
$\lambda \beta _{1}$ \\ 
$\vdots $ \\ 
$\lambda \beta _{k}$ \\ 
$\lambda \beta _{k+1}$ \\ 
$\vdots $ \\ 
$\lambda \beta _{m}$%
\end{tabular}%
\right) {\tiny =}\left( 
\begin{tabular}{l}
$\gamma _{1}$ \\ 
$\vdots $ \\ 
$\gamma _{k}$ \\ 
$\gamma _{k+1}$ \\ 
$\vdots $ \\ 
$\gamma _{m}$%
\end{tabular}%
\right)  \label{arri}
\end{equation}%
has a unique solution $b_{0}=\dsum\limits_{i=1}^{i=m}\beta _{i}e_{i}$ for
every $c_{0}=\dsum\limits_{i=1}^{i=m}\gamma e_{i}.$ This means that, for $%
\alpha _{k+1}=\cdots =\alpha _{m}=0,$ the equation 
\begin{equation*}
%TCIMACRO{\TeXButton{TeX field}{\tiny}}%
%BeginExpansion
\tiny%
%EndExpansion
\left( \left( 
\begin{array}{ccc}
\omega _{11} & \cdots & \omega _{1m} \\ 
\vdots & \ddots & \vdots \\ 
\omega _{m1} & \cdots & \omega _{mm}%
\end{array}%
\right) \left( 
\begin{array}{ccc}
\alpha _{1} & \cdots & 0 \\ 
\vdots & \ddots & \vdots \\ 
0 & \cdots & \alpha _{m}%
\end{array}%
\right) -\left( 
\begin{array}{ccc}
\lambda & \cdots & 0 \\ 
\vdots & \ddots & \vdots \\ 
0 & \cdots & \lambda%
\end{array}%
\right) \right) \left( 
\begin{array}{c}
\beta _{1} \\ 
\vdots \\ 
\beta _{m}%
\end{array}%
\right) {\tiny =}\left( 
\begin{array}{c}
\gamma _{1} \\ 
\vdots \\ 
\gamma _{m}%
\end{array}%
\right) {\tiny ,}
\end{equation*}%
has a unique solution. Equivalently $(L_{a}-\lambda I)$ is not bijective if,
and only if, that $\lambda $ is an eigenvalue of the matrix 
\begin{equation*}
\left( 
\begin{array}{ccc}
\omega _{11} & \cdots & \omega _{1m} \\ 
\vdots & \ddots & \vdots \\ 
\omega _{m1} & \cdots & \omega _{mm}%
\end{array}%
\right) \left( 
\begin{array}{ccc}
\alpha _{1} & \cdots & 0 \\ 
\vdots & \ddots & \vdots \\ 
0 & \cdots & \alpha _{m}%
\end{array}%
\right) ,
\end{equation*}%
where $\alpha _{k+1}=\cdots =\alpha _{m}=0.$ This proves $\mathrm{(i)}$.

On the other hand, $\frac{a}{\lambda }\in q-inv(A)$ if and only if the
equation (\ref{arri}) has a solution when $\gamma _{i}=$ $\alpha _{i},$ for $%
i=1,\cdots ,k,$ and $\gamma _{k+1}=\cdots =\gamma _{m}=0.$ This proves $%
\mathrm{(ii)}$.
\end{proof}

\bigskip For the next result, recall that if $B:=\{e_{i}:i\in \Lambda \}$ is
a natural basis of an evolution algebra $A,$ and if $\Lambda _{0}\subseteq
\Lambda $ is non-empty, then set of descendents of $\Lambda _{0}$ is defined
as%
\begin{equation*}
D(\Lambda _{0})=\dbigcup\limits_{i\in \Lambda _{0}}D(i),
\end{equation*}%
where $D(i)$ denotes the set of descendents of $i$ (see Definition \ref%
{descen}).

\begin{proposition}
\label{segununo}Let $A$ be an evolution algebra and $B:=\{e_{i}:i\in \Lambda
\}$ a natural basis$.$ Let $J$ be an ideal of $A,$with support $\Lambda _{J}.
$ Then, $I_{1}:=lin\{e_{i}^{2}:i\in \Lambda _{J}\cup D(\Lambda _{J})\}$ and $%
I_{2}:=lin\{e_{i}:i\in \Lambda _{J}\cup D(\Lambda _{J})\}$ are ideals of $A$
and 
\begin{equation*}
I_{1}\subseteq J\subseteq I_{2}.
\end{equation*}%
Moreover, if $\dim A<\infty $ and $\det M_{B}(A)\neq 0$ then, $I=J=I_{2}$.
\end{proposition}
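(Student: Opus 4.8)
The plan is to carry out everything in terms of the descendent-closed index set $\Gamma:=\Lambda_{J}\cup D(\Lambda_{J})$. The crucial structural fact is that $D(i)\subseteq\Gamma$ for every $i\in\Gamma$: indeed, if $i\in\Lambda_{J}$ then $D(i)\subseteq D(\Lambda_{J})\subseteq\Gamma$, and if $i\in D(\Lambda_{J})$, say $i\in D(j)$ with $j\in\Lambda_{J}$, then $D(i)\subseteq D(j)\subseteq\Gamma$, where the transitivity $D(i)\subseteq D(j)$ is supplied by Corollary \ref{new}. In particular $D^{1}(i)=\Lambda_{e_{i}^{2}}\subseteq\Gamma$ for every $i\in\Gamma$, which is exactly what is needed to close $I_{1}$ and $I_{2}$ under multiplication.

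First I would verify that $I_{1}$ and $I_{2}$ are ideals; since $A$ is commutative it suffices to multiply generators by an arbitrary $a\in A$. Using $e_{i}e_{j}=0$ for $i\neq j$ one has $e_{i}a=\pi_{i}(a)\,e_{i}^{2}=\pi_{i}(a)\sum_{k\in D^{1}(i)}\omega_{ki}e_{k}$, which lies in $I_{2}$ because $D^{1}(i)\subseteq\Gamma$; and $e_{i}^{2}\,a=\sum_{k\in D^{1}(i)}\omega_{ki}\pi_{k}(a)\,e_{k}^{2}$ lies in $I_{1}$ for the same reason. Hence both $I_{1}$ and $I_{2}$ are ideals, and the same computation shows $I_{1}\subseteq I_{2}$.

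Next I would establish the sandwich. The inclusion $J\subseteq I_{2}$ is immediate, since every $a\in J$ satisfies $\Lambda_{a}\subseteq\Lambda_{J}\subseteq\Gamma$, whence $a\in I_{2}$. For $I_{1}\subseteq J$ the task is to show $e_{i}^{2}\in J$ for each $i\in\Gamma$. If $i\in\Lambda_{J}$, choose $a\in J$ with $\pi_{i}(a)\neq0$; then $e_{i}a=\pi_{i}(a)e_{i}^{2}\in J$ gives $e_{i}^{2}\in J$. If $i\in D(\Lambda_{J})$ then $i\in D^{n}(j)$ for some $j\in\Lambda_{J}$, and I would induct on $n$ using the relation $e_{i}e_{k}^{2}=\omega_{ik}e_{i}^{2}$ with $\omega_{ik}\neq0$ valid whenever $i\in D^{1}(k)$ (from \cite[Proposition 3.4]{Ca-Si-Ve}): the case $n=1$ follows from $e_{j}^{2}\in J$, and for the step one writes $i\in D^{1}(k)$ with $k\in D^{n-1}(j)$ via Corollary \ref{new}, so that $e_{k}^{2}\in J$ (induction hypothesis) yields $e_{i}^{2}=\omega_{ik}^{-1}e_{i}e_{k}^{2}\in J$. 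This gives $I_{1}\subseteq J$.

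For the final assertion (reading its conclusion as $I_{1}=J=I_{2}$), the hypothesis $\det M_{B}(A)\neq0$ says precisely that the columns of the structure matrix, which are the coordinate vectors of the elements $e_{i}^{2}$, are linearly independent; hence $\{e_{i}^{2}:i\in\Lambda\}$ is a basis of $A$ and, in particular, $\{e_{i}^{2}:i\in\Gamma\}$ is linearly independent. Then $\dim I_{1}=\vert\Gamma\vert=\dim I_{2}$, and together with $I_{1}\subseteq I_{2}$ this forces $I_{1}=I_{2}$, so the sandwich collapses to $I_{1}=J=I_{2}$. I expect the only delicate point to be the inductive descendent argument for $I_{1}\subseteq J$, where one must combine the transitivity of Corollary \ref{new} with the exact multiplication rule $e_{i}e_{k}^{2}=\omega_{ik}e_{i}^{2}$ and the non-vanishing of $\omega_{ik}$; the remaining steps are bookkeeping with supports and a dimension count.
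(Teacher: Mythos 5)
Your proof is correct and takes essentially the same route as the paper: you show $\Gamma:=\Lambda_{J}\cup D(\Lambda_{J})$ is closed under descendents, verify that $I_{1}$ and $I_{2}$ are ideals by the same generator computations (using $e_{j}e_{i}^{2}=\omega_{ji}e_{j}^{2}$ and $\Lambda_{e_{i}^{2}}=D^{1}(i)\subseteq\Gamma$), and settle the final claim by the same dimension count, since $\det M_{B}(A)\neq 0$ makes $\{e_{i}^{2}:i\in\Lambda\}$ linearly independent. The only difference is one of detail: the paper dismisses $I_{1}\subseteq J\subseteq I_{2}$ as obvious, whereas you supply the inductive descendent argument showing $e_{i}^{2}\in J$ for all $i\in\Gamma$ (and you correctly read the statement's typo $I=J=I_{2}$ as $I_{1}=J=I_{2}$), which fills in rather than departs from the paper's argument.
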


\begin{proof}
That $I_{1}$ is an ideal is clear because if $i\in \Lambda _{J}\cup
D(\Lambda _{J})$ and $j\in \Lambda $ then, either $e_{j}e_{i}^{2}=0$ or $%
e_{j}e_{i}^{2}=\omega _{ji}e_{j}^{2}$ with $\omega _{ji}\neq 0$ so that $%
j\in D^{1}(i)\subseteq D(\Lambda _{J}).\,\ $Similarly, $I_{2}$ is another
ideal because if $i\in \Lambda _{J}\cup D(\Lambda _{J})$ then $%
e_{i}^{2}=\sum_{k\in \Lambda }\omega _{ki}e_{k}$ and $\Lambda
_{e_{i}^{2}}\subseteq D(\Lambda _{J}).$ Obviously $I_{1}\subseteq J\subseteq
I_{2}$. On the other hand, if $\dim A<\infty $ and $\det M_{B}(A)\neq 0$
then $I_{1}=J=I_{2}$ as $\dim I_{1}=\dim I_{2}.$
\end{proof}

\begin{corollary}
\label{otro}Let $A$ be an evolution algebra and $B:=\{e_{i}:i\in \Lambda \}$
a natural basis of $A.$ Then, \ for every $i\in \Lambda ,$ the ideal
generated by $e_{i}^{2}$ is 
\begin{equation*}
\left\langle e_{i}^{2}\right\rangle =lin\{e_{j}^{2}:j\in D(i)\cup \{i\}\}.
\end{equation*}%
Consequently the dimension of every one-generated ideal in an evolution
algebra is countable.
\end{corollary}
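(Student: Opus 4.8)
The plan is to prove the identity by a double inclusion, taking $I:=lin\{e_{j}^{2}:j\in D(i)\cup \{i\}\}$ and showing that $I$ is an ideal containing $e_{i}^{2}$ and that, conversely, every $e_{j}^{2}$ with $j\in D(i)\cup\{i\}$ lies in $\langle e_{i}^{2}\rangle$. The computation underpinning everything is that $e_{k}e_{j}^{2}=\omega_{kj}e_{k}^{2}$ for all $k,j\in\Lambda$, which is immediate from $e_{j}^{2}=\sum_{l}\omega_{lj}e_{l}$ and $e_{k}e_{l}=0$ for $k\neq l$.

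To see that $I$ is an ideal, I observe that for $a=\sum_{k}a_{k}e_{k}$ and $j\in D(i)\cup\{i\}$ one has $ae_{j}^{2}=\sum_{k}a_{k}\omega_{kj}e_{k}^{2}$, and every index $k$ with $\omega_{kj}\neq 0$ belongs to $D^{1}(j)$. Since $D^{1}(j)\subseteq D(i)$ whenever $j\in D(i)\cup\{i\}$ (if $j\in D^{n}(i)$ then $D^{1}(j)\subseteq D^{n+1}(i)\subseteq D(i)$, while $D^{1}(i)\subseteq D(i)$), it follows that $ae_{j}^{2}\in I$; by linearity $AI\subseteq I$, so $I$ is an ideal, and as $e_{i}^{2}\in I$ we get $\langle e_{i}^{2}\rangle\subseteq I$. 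For the reverse inclusion I use the characterization of descendents in \cite[Proposition 3.4]{Ca-Si-Ve} recalled above: if $j\in D(i)$, say $j\in D^{m}(i)$, then $e_{j}^{2}$ is expressed there as a scalar multiple of a product obtained from $e_{i}^{2}$ by successive multiplications by basis vectors, and such a product lies in $\langle e_{i}^{2}\rangle$ because an ideal is stable under multiplication by any element of $A$; together with $e_{i}^{2}\in\langle e_{i}^{2}\rangle$ this gives $I\subseteq\langle e_{i}^{2}\rangle$, hence equality. (Alternatively, this inclusion drops out of Proposition \ref{segununo} applied to $J=\langle e_{i}^{2}\rangle$.)

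For the countability assertion I first note that each $D^{1}(j)=\Lambda_{e_{j}^{2}}$ is finite, since by the definition of the structure matrix the columns of $M_{B}(A)$ have only finitely many non-zero entries. An induction on $n$ based on $D^{n}(j)=\bigcup_{k\in D^{n-1}(j)}D^{1}(k)$ then shows that every $D^{n}(i)$ is a finite union of finite sets, hence finite; consequently $D(i)=\bigcup_{n}D^{n}(i)$ is at most countable, and the identity just proved yields $\dim\langle e_{i}^{2}\rangle\leq |D(i)\cup\{i\}|\leq\aleph_{0}$. For an arbitrary one-generated ideal $\langle a\rangle$ with $a=\sum_{i\in\Lambda_{a}}\alpha_{i}e_{i}$ and $\Lambda_{a}$ finite, the relation $e_{i}a=\alpha_{i}e_{i}^{2}$ shows $e_{i}^{2}\in\langle a\rangle$ for each $i\in\Lambda_{a}$; one checks that $\mathbb{K}a+\sum_{i\in\Lambda_{a}}\langle e_{i}^{2}\rangle$ is an ideal, so that $\langle a\rangle=\mathbb{K}a+\sum_{i\in\Lambda_{a}}\langle e_{i}^{2}\rangle$. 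Being a finite sum of subspaces of at most countable dimension, $\langle a\rangle$ has countable dimension.

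The only genuinely delicate point is the bookkeeping of the descendent sets: the inclusion $D^{1}(j)\subseteq D(i)$ for $j\in D(i)\cup\{i\}$, which is what makes $I$ closed under multiplication, and the finiteness of the sets $D^{1}$ that feeds the countability induction. Everything else reduces to the single product rule $e_{k}e_{j}^{2}=\omega_{kj}e_{k}^{2}$ and elementary linear algebra.
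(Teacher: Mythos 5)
Your proof is correct and follows essentially the same route as the paper: the paper obtains both inclusions from Proposition \ref{segununo} (the span $lin\{e_{j}^{2}:j\in D(i)\cup \{i\}\}$ is an ideal contained in $\left\langle e_{i}^{2}\right\rangle$) plus minimality of the generated ideal, and your direct computations with $e_{k}e_{j}^{2}=\omega _{kj}e_{k}^{2}$ and the descendant formula of \cite[Proposition 3.4]{Ca-Si-Ve} are precisely what that proposition's proof rests on. The only substantive addition on your side is that you spell out the reduction of a general one-generated ideal $\left\langle a\right\rangle$ to $\mathbb{K}a$ plus finitely many ideals $\left\langle e_{i}^{2}\right\rangle$ with $i\in \Lambda _{a}$, a step the paper leaves implicit in its ``consequently'' clause.
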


\begin{proof}
From the above result we have that $lin\{e_{j}^{2}:j\in D(i)\cup \{i\}\}$ is
an ideal contained into $\left\langle e_{i}^{2}\right\rangle ,$ and
obviously $e_{i}^{2}\in lin\{e_{j}^{2}:j\in D(i)\cup \{i\}\},$ so the result
follows (note that $D(i)$ is countable).
\end{proof}

In the next result we characterize m-semisimple evolution algebras with
finite dimension.

\begin{corollary}
Let $A$ be an evolution algebra and $B:=\{e_{i}:i\in \Lambda \}$ a natural
basis.

$\mathrm{(i)}\ A$ is spectrally semisimple if and only if, for every index $%
i\in \Lambda $ there exists $a$ in $lin\{e_{j}^{2}:j\in D(i)\cup \{i\}\}$
such that $\sigma ^{A}(a)\neq 0.$

$\mathrm{(ii)}\ A$ is m-semisimple if, and only if, for every index $i\in
\Lambda $ there exists $a$ in $lin\{e_{j}^{2}:j\in D(i)\cup \{i\}\}$ such
that $\sigma _{m}^{A}(a)\neq 0.$
\end{corollary}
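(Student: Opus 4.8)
The plan is to reduce both statements to a single fact about ideals, using Corollary \ref{otro}, which identifies $\mathrm{lin}\{e_j^2:j\in D(i)\cup\{i\}\}$ with the one-generated ideal $\langle e_i^2\rangle$. Writing $S_m:=\{a\in A:\sigma_m^A(a)=\{0\}\}$ (so that, by Definition \ref{radius}, $S_m=\{a\in A:\rho_m(a)=0\}$) and $S:=\{a\in A:\sigma^A(a)=\{0\}\}=\{a\in A:\rho(a)=0\}$, Definition \ref{semisimplicity} says that $A$ is m-semisimple (resp. spectrally semisimple) precisely when $\{0\}$ is the only ideal contained in $S_m$ (resp. $S$). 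Thus, after translating through Corollary \ref{otro}, assertion $\mathrm{(ii)}$ becomes: $\{0\}$ is the only ideal contained in $S_m$ if and only if $\langle e_i^2\rangle\not\subseteq S_m$ for every $i\in\Lambda$, and $\mathrm{(i)}$ is the verbatim analogue with $S$ in place of $S_m$. I would prove the two equivalences by the same scheme, so I describe it for $S_m$.

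The key observation is that a non-zero ideal is forced to contain the squares of the basis vectors in its support. Concretely, if $Q$ is an ideal and $i\in\Lambda_Q$, pick $q=\sum_j\alpha_j e_j\in Q$ with $\alpha_i\neq 0$; then $e_iq=\alpha_i e_i^2\in Q$ because $e_ie_j=0$ for $j\neq i$, so $e_i^2\in Q$ and hence $\langle e_i^2\rangle\subseteq Q$. This immediately yields the ``if'' direction: assuming $\langle e_i^2\rangle\not\subseteq S_m$ for all $i$, suppose toward a contradiction that some non-zero ideal $Q\subseteq S_m$ exists; choosing $i\in\Lambda_Q$ (non-empty since $Q\neq\{0\}$) gives $\langle e_i^2\rangle\subseteq Q\subseteq S_m$, contradicting the hypothesis. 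Hence $A$ is m-semisimple.

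For the ``only if'' direction I would first record that $Ann(A)\subseteq S_m$: for $a\in Ann(A)$ one has $L_a=R_a=0$, so by $(\ref{tres})$ $\sigma_m^A(a)=\{0\}$, and $S_m\subseteq S$ follows from $\sigma^A(a)\subseteq\sigma_m^A(a)$ (this also gives $Ann(A)\subseteq S$ for part $\mathrm{(i)}$; alternatively one checks directly that $-a/\lambda$ is a quasi-inverse of $a/\lambda$ for $\lambda\neq 0$, so that $\sigma^A(a)=\{0\}$). Now assume $A$ is m-semisimple. If some $e_i^2=0$ then $\mathbb{K}e_i\subseteq Ann(A)$ is a non-zero ideal contained in $S_m$, contradicting m-semisimplicity; thus $A$ is non-degenerate and every $\langle e_i^2\rangle$ is a non-zero ideal. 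Since $\{0\}$ is the only ideal inside $S_m$, each such non-zero ideal satisfies $\langle e_i^2\rangle\not\subseteq S_m$, which is exactly the right-hand condition of $\mathrm{(ii)}$.

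Part $\mathrm{(i)}$ runs identically with $S$ replacing $S_m$, the only changes being the two sentences invoking $Ann(A)\subseteq S$ and $\sigma^A\subseteq\sigma_m^A$. I expect the point requiring the most care to be the degenerate case: the right-hand conditions implicitly demand non-degeneracy (when $e_i^2=0$ the span $\mathrm{lin}\{e_j^2:j\in D(i)\cup\{i\}\}$ collapses to $\{0\}$, whose only element has spectrum $\{0\}$), so the equivalence must be reconciled with the annihilator, and the containment $Ann(A)\subseteq S_m$, i.e. the vanishing of the multiplication operators on $Ann(A)$, is the one genuinely non-formal input.
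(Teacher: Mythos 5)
Your proof is correct and follows essentially the same route as the paper: both translate the statement through Corollary \ref{otro} and rest on the observation that a non-zero ideal $Q$ contains $\langle e_{i}^{2}\rangle$ for every $i$ in its support (since $e_{i}q=\alpha _{i}e_{i}^{2}\in Q$), so that a non-zero ideal lies in $\{a\in A:\rho _{m}(a)=0\}$ exactly when some $\langle e_{i}^{2}\rangle $ does. If anything, you are more careful than the paper's two-line argument, whose asserted equivalence (a subset contains a non-zero ideal if and only if it contains some ideal of the form $\langle e_{i}^{2}\rangle $) is, for arbitrary subsets, not valid when some $e_{i}^{2}=0$; your annihilator observation ($\mathbb{K}e_{i}\subseteq Ann(A)\subseteq \{a\in A:\rho _{m}(a)=0\}$) supplies precisely the missing step for that degenerate case.
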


\begin{proof}
By definition, $A$ is spectrally semisimple (respectively m-semisimple)\ if
and only if, the set $\{a\in A:\sigma ^{A}(a)=0\}$ (respectively $\{a\in
A:\sigma _{m}^{A}(a)=0\}$) does not contain a non-zero ideal. Since a subset 
$S\subseteq A$ contains a non-zero ideal if and only if $S$ contains an
ideal of the type $\left\langle e_{i}^{2}\right\rangle $ for some $i\in
\Lambda ,$ the result follows from Corollary \ref{otro}.
\end{proof}

For finite dimensional evolution algebras we have the following
characterization of the m-semisimplicity.

\begin{corollary}
Let $A$ be a finite-dimensional evolution algebra and $B:=\{e_{1},...,e_{n}\}
$ a natural basis. Then $A$ is m-semisimple if, and only if, for every $%
i=1,...,n,$ there exists $a=\sum_{k=1}^{n}\alpha _{k}e_{k}\in
lin\{e_{j}^{2}:j\in D(i)\cup \{i\}\}$ such that the matrix 
\begin{equation*}
M_{B}(A)=\left( 
\begin{array}{ccc}
\omega _{11} & \cdots  & \omega _{1n} \\ 
\vdots  & \ddots  & \vdots  \\ 
\omega _{n1} & \cdots  & \omega _{nn}%
\end{array}%
\right) \left( 
\begin{array}{ccc}
\alpha _{1} & \cdots  & 0 \\ 
\vdots  & \ddots  & \vdots  \\ 
0 & \cdots  & \alpha _{n}%
\end{array}%
\right) 
\end{equation*}%
has a non-zero eigenvalue.
\end{corollary}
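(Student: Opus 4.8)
The plan is to derive this statement as a direct specialization of the preceding corollary, using the eigenvalue description of the m-spectrum recorded in Propositions \ref{propoa} and \ref{propob}. First I would invoke part $\mathrm{(ii)}$ of the corollary immediately above: since $A$ is finite-dimensional with $\Lambda=\{1,\dots,n\}$, that result says $A$ is m-semisimple if, and only if, for every $i\in\{1,\dots,n\}$ there exists $a=\sum_{k=1}^{n}\alpha_k e_k$ in $lin\{e_j^2:j\in D(i)\cup\{i\}\}$ with $\sigma_m^A(a)\neq\{0\}$. Consequently the whole task reduces to rephrasing the condition $\sigma_m^A(a)\neq\{0\}$ as the assertion that the product $M_B(A)\,\mathrm{diag}(\alpha_1,\dots,\alpha_n)$ admits a nonzero eigenvalue.

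To carry out this translation I would distinguish the two cases covered separately by the earlier propositions. If $A$ is non-trivial, then Proposition \ref{propob}$\mathrm{(i)}$ states that a nonzero scalar $\lambda$ belongs to $\sigma_m^A(a)$ exactly when $\lambda$ is an eigenvalue of the product of the structure matrix with $\mathrm{diag}(\alpha_1,\dots,\alpha_n)$; hence $\sigma_m^A(a)\setminus\{0\}$ equals the set of nonzero eigenvalues of that product, so $\sigma_m^A(a)\neq\{0\}$ is equivalent to the existence of a nonzero eigenvalue. If instead $A$ is a non-zero trivial evolution algebra, then $M_B(A)$ is diagonal with entries $\omega_{ii}\neq 0$, so $M_B(A)\,\mathrm{diag}(\alpha_1,\dots,\alpha_n)=\mathrm{diag}(\omega_{11}\alpha_1,\dots,\omega_{nn}\alpha_n)$; by Proposition \ref{propoa} one has $\sigma_m^A(a)\setminus\{0\}=\{\alpha_i\omega_{ii}:i\in\Lambda_a\}$, which is precisely the set of nonzero eigenvalues of this diagonal matrix. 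In both cases $\sigma_m^A(a)\neq\{0\}$ holds if and only if $M_B(A)\,\mathrm{diag}(\alpha_1,\dots,\alpha_n)$ has a nonzero eigenvalue, and combining this equivalence with the preceding corollary gives the claim.

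The argument is essentially bookkeeping: the substantive content lives in the corollary above and in Propositions \ref{propoa} and \ref{propob}, and no genuine obstacle arises. The only point deserving attention is the case split, since the eigenvalue description of $\sigma_m^A(a)$ was stated for trivial algebras (Proposition \ref{propoa}) and for non-trivial ones (Proposition \ref{propob}) separately; once one checks that both descriptions reduce to the single uniform statement ``the product matrix has a nonzero eigenvalue,'' the proof is complete.
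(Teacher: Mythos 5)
Your proposal is correct and follows essentially the same route as the paper: the paper's proof is a one-line citation of the preceding corollary together with Proposition \ref{propob}. Your additional case distinction is in fact a point of extra care rather than a deviation, since Proposition \ref{propob} is stated only for non-trivial evolution algebras and the paper silently ignores the non-zero trivial case, which you correctly dispatch via Proposition \ref{propoa} by observing that the two eigenvalue descriptions agree.
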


\begin{proof}
The result follows directly from the above corollary and Proposition \ref%
{propob}.
\end{proof}

The following result provides a helpful sufficient condition for the
semisimplicity of a finite dimensional evolution algebra.

\begin{corollary}
\label{gene1}Let $A$ be an evolution algebra, and $B:=\{e_{1},...,e_{n}\}$ a
natural basis. If, for every $i=1,...,n,$ the matrix 
\begin{equation*}
M_{j}(B)=\left( 
\begin{array}{ccc}
\omega _{11} & \cdots  & \omega _{1n} \\ 
\vdots  & \ddots  & \vdots  \\ 
\omega _{n1} & \cdots  & \omega _{nn}%
\end{array}%
\right) \left( 
\begin{array}{ccc}
\omega _{i1} & \cdots  & 0 \\ 
\vdots  & \ddots  & \vdots  \\ 
0 & \cdots  & \omega _{in}%
\end{array}%
\right) 
\end{equation*}%
has a non-zero eigenvalue then, $A$ is m-semisimple.
\end{corollary}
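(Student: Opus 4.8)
The plan is to obtain this corollary as an immediate specialization of the preceding corollary (together with Proposition \ref{propob}), which characterizes m-semisimplicity through an \emph{existential} condition: $A$ is m-semisimple if and only if, for each index $i$, \emph{some} element $a=\sum_{k=1}^{n}\alpha_{k}e_{k}$ of $lin\{e_{j}^{2}:j\in D(i)\cup\{i\}\}$ has the property that the product $M_{B}(A)\,\mathrm{diag}(\alpha_{1},\dots,\alpha_{n})$ admits a non-zero eigenvalue. Because the quantifier on $a$ is existential, to certify m-semisimplicity it suffices to exhibit, for every $i$, a single convenient witness; the strategy is precisely to commit to one canonical choice.

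First I would fix $i\in\{1,\dots,n\}$ and take the witness $a:=e_{i}^{2}$. Since $i\in D(i)\cup\{i\}$, this element lies in $lin\{e_{j}^{2}:j\in D(i)\cup\{i\}\}$, so it is an admissible choice in the criterion of the previous corollary. Next I would read off its coordinates from the structure matrix: the coefficient vector of $e_{i}^{2}$ in the basis $B$ is the corresponding row of $M_{B}(A)$, and substituting these coordinates for $(\alpha_{1},\dots,\alpha_{n})$ turns $M_{B}(A)\,\mathrm{diag}(\alpha_{1},\dots,\alpha_{n})$ into exactly the matrix $M_{i}(B)$ appearing in the hypothesis. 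This identification is the only computation involved, and it is purely a matter of matching the definition of the structure matrix against the diagonal factor in the statement.

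Finally, by hypothesis $M_{i}(B)$ has a non-zero eigenvalue for every $i$; hence the witness $a=e_{i}^{2}$ fulfils the requirement of the preceding corollary for each $i$, and we conclude that $A$ is m-semisimple. There is no genuine obstacle here: the entire content is that committing to the single element $e_{i}^{2}$, rather than exploiting the full freedom of ranging over all $a\in lin\{e_{j}^{2}:j\in D(i)\cup\{i\}\}$, already suffices, which is exactly why the resulting condition is only sufficient and not necessary. I would close with the remark that the hypothesis is strictly weaker than m-semisimplicity, since for a given $i$ the matrix attached to $e_{i}^{2}$ could be nilpotent while some other element of $lin\{e_{j}^{2}:j\in D(i)\cup\{i\}\}$ still yields a non-zero eigenvalue.
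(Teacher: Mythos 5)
Your overall strategy is exactly the paper's: its proof of this corollary is the one--line observation that $\sigma _{m}^{A}(e_{i}^{2})\neq \{0\}$ for every $i$, i.e.\ it takes $a=e_{i}^{2}$ as the witness in the preceding corollary, just as you do. However, your key identification step is wrong, and the error is not cosmetic. By the paper's convention $e_{i}^{2}=\sum_{k}\omega _{ki}e_{k}$ and $M_{B}(A)=(\omega _{ki})$, so the coordinate vector of $e_{i}^{2}$ is the $i$-th \emph{column} $(\omega _{1i},\ldots ,\omega _{ni})$ of $M_{B}(A)$, not the $i$-th row as you assert (compare Example \ref{exafinal}, where the coordinates of $e_{1}^{2}$, namely $-\frac{1}{2},-\frac{1}{3}$, form the first column of the structure matrix and appear as the diagonal factor there). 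Hence your witness $e_{i}^{2}$ produces the matrix $M_{B}(A)\,\mathrm{diag}(\omega _{1i},\ldots ,\omega _{ni})$, which is in general a different matrix from the one printed in the statement, $M_{B}(A)\,\mathrm{diag}(\omega _{i1},\ldots ,\omega _{in})$.

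The gap is genuine because the statement read literally (with rows) is false; the printed indices are a transposition typo. Take the evolution algebra with $e_{1}^{2}=0$ and $e_{2}^{2}=e_{1}+e_{2}$, so that
\begin{equation*}
M_{B}(A)=\left(
\begin{array}{cc}
0 & 1 \\
0 & 1
\end{array}
\right) .
\end{equation*}
Both rows equal $(0,1)$, and $M_{B}(A)\,\mathrm{diag}(0,1)=M_{B}(A)$ has the non-zero eigenvalue $1$, so the row-hypothesis holds for $i=1,2$. Yet $\mathbb{K}e_{1}$ is a non-zero ideal on which $L_{a}=R_{a}=0$, hence $\sigma _{m}^{A}(a)=\{0\}$ for every $a\in \mathbb{K}e_{1}$, so $A$ is not m-semisimple; the column-hypothesis correctly fails here, since $M_{B}(A)\,\mathrm{diag}(\omega _{11},\omega _{21})=0$. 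So the correct move is to flag the typo, restate the hypothesis with diagonal $(\omega _{1i},\ldots ,\omega _{ni})$, and then your argument (which is the paper's) goes through verbatim; as written, your proof ``verifies'' a false statement by pairing the paper's typo with a false identity. One further slip: the hypothesis is strictly \emph{stronger} than m-semisimplicity, not ``strictly weaker'' --- a sufficient but non-necessary condition is the stronger one --- though your explanation of why the converse fails is otherwise correct.
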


\begin{proof}
The proof is clear as $\sigma _{m}^{A}(e_{i}^{2})\neq 0$ for every $%
i=1,...,n.$ Therefore zero is the unique ideal contained into $\{a\in
A:\sigma _{m}^{A}(a)=0\}$ .
\end{proof}

\end{document}